\newtheorem{lemma}{Lemma}[section]
\newtheorem{theorem}{Theorem}[section]
\theoremstyle{remark}
\newtheorem{remark}{Remark}[section]
\numberwithin{equation}{section}
\def\Om{\Omega}
\def\om{\omega}
\def\e{\varepsilon}
\def\g{\gamma}
\def\G{\Gamma}
\def\l{\lambda}
\def\p{\partial}
\def\D{\Delta}
\def\E{\mbox{\rm e}}
\def\a{\alpha}
\def\b{\beta}
\def\d{\delta}
\def\L{\Lambda}
\def\z{\zeta}
\def\di{\,d}
\def\Op{\mathcal{H}}
\def\S{\mathcal{S}}
\newcommand{\EE}{\mathbb{E}}
\newcommand{\NN}{\mathds{N}}
\newcommand{\PP}{\mathbb{P}}
\newcommand{\RR}{\mathds{R}}
\newcommand{\ZZ}{\mathds{Z}}
\def\bc{\mathcal{B}}
\def\Dom{\mathfrak{D}}
\def\Wl{W^{\mathrm{l}}}
\def\Ws{W^{\mathrm{s}}}
\def\Wd{W^{\mathrm{dlt}}}
\def\Wle{W^{\mathrm{loc}}_*}
\def\Wse{W^{\mathrm{osc}}_*}
\def\Wloc{W^{\mathrm{loc}}}
\def\Wosc{W^{\mathrm{osc}}}
\def\Tosc{T^{\mathrm{osc}}}
\def\Opl{\mathcal{H}^{\e,\mathrm{loc}}(\om)}
\def\Ops{\mathcal{H}^{\e,\mathrm{osc}}(\om)}
\def\Opd{\mathcal{H}^{\e,\mathrm{dlt}}(\om)}
\def\fh{\mathfrak{h}_{\mathrm{dlt}}}
\def\OplN{\mathcal{H}_{\a,N}^{\e,\mathrm{loc}}(\om)}
\def\OpsN{\mathcal{H}_{\a,N}^{\e,\mathrm{osc}}(\om)}
\def\OpdN{\mathcal{H}_{\a,N}^{\e,\mathrm{dlt}}(\om)}
\def\lN#1{\l_{\a,N}^{\e,\mathrm{#1}}(\om)}
\def\pL{\mathcal{L}}
\def\Opg{\mathcal{H}^\e_{\a,N}(\om)}
\def\lg{\l^\e_{\a,N}(\om)}
\def\Vg#1{\mathcal{V}^{\e,\mathrm{#1}}_{\a,N}(\om)}
 \DeclareMathOperator{\spec}{\sigma}
\DeclareMathOperator{\dist}{dist} \DeclareMathOperator{\supp}{supp}
\begin{document}
\allowdisplaybreaks

\begin{center}
\textbf{\large Initial length scale estimate for waveguides with some random singular potentials}

\medskip

 D.I. Borisov$^{1,2,3}$, R.Kh. Karimov$^{2}$, T.F. Sharapov$^{2}$

\end{center}

\begin{quote}
{\it 1) Institute of Mathematics CC USC RAS,
\\
Chernyshevskii str., 112,
450008, Ufa, Russia
\\
2) Bashkir State Pedagogical University named after M. Akhmulla,
\\
October rev. st., 3a, 450000, Ufa, Russia
\\
3)  University of Hradec Kr\'alov\'e,
  Rokitanskeho, 62,
  \\
   50003, Hradec Kr\'alov\'e, Czech Republic
\\
E-mails: borisovdi@yandex.ru, karimov\_ramis\_92@mail.ru, stf0804@mail.ru
}
\end{quote}

\begin{abstract}
In this work we consider three examples of random singular perturbations in multi-dimensional models of waveguides. These perturbations are described by a large potential supported on a set of a small measure, by a compactly supported fast oscillating potential, and by a delta-potential. In all cases we prove initial length scale estimate.
\end{abstract}
{\small
\begin{quote}
\noindent{\bf Keywords:} random operator, initial length scale estimate, perturbation, small parameter, spectral localization.

\noindent{\bf Mathematics Subject Classification: }{35P15, 35C20, 35B25, 60H25, 82B44}
\end{quote}}

\section{Introduction}

One of the approaches for describing wave processes in disordered media are random Hamiltonians, which are elliptic operators in unbounded domains depending on a countably many independent identically distributed random variables. Such operators are quite intenstively studied. One of issues in question is the spectral localization. The latter means that the whole spectrum or a part of it are pure point with the probability one. There are many works where such property of the spectrum was studied for numerous particular examples, see, for instance, \cite{MH84}--\cite{Veselic-02a}, and the references therein. One of the known ways for proving the spectral localization is the multiscale analysis, \cite{MH84}, \cite{FS83}. It is based on a certain induction whose basis is the initial length scale estimate.

In paper \cite{BGV15}, there was proposed a general approach for proving initial length scale estimate for operators with small random perturbations. The perturbations were described by abstract symmetric operators being small w.r.t. the original unperturbed one. Under minimal conditions for the perturbations, the initial length scale estimate was proven at the bottom of the spectrum. Such general approach allowed the authors to consider various examples both known and new.

The present paper is a continuation of work \cite{BGV15}. We consider three examples of random perturbations. Each of them is not regular, i.e., small w.r.t. the original unperturbed operator. Moreover, these perturbations are singular in some sense. At that same time, we show that the results of \cite{BGV15} on initial length scale estimate can be extended for the considered perturbations.
This is the main result of the present paper.

\section{Formulation of problem and main results}

Let $x=(x',x_{n+1})$, $x'=(x_1,\ldots,x_n)$ be Cartesian coordinates in $\RR^{n+1}$ and $\RR^n$, respectively, $n\geqslant 1$. By $\Pi$ we denote an infinite multi-dimensional layer of width $d>0$:
\begin{equation*}
\Pi:=\{x: \, 0<x_{n+1}<d\}.
\end{equation*}
In layer $\Pi$, we consider the operator
\begin{equation}\label{2.1}
\Op^0:=-\D+V_0,
\end{equation}
where $V_0=V_0(x_{n+1})$ is a bounded measurable potential. As the boundary condition on $\p\Pi$, we choose the Dirichlet or Neumann condition:
\begin{equation}\label{2.2}
\bc u=0
\end{equation}
on $\p\Pi$, where $\bc u=u$ or $\bc u=\frac{\p u}{\p x_{n+1}}$. We do not exclude the situation, when on the upper and lower boundaries of $\p\Pi$ the boundary conditions of different types are imposed.

Operator $\Op^0$ is considered as unbounded in space $L_2(\Pi)$ on the domain $\Dom(\Op^0):=\{u\in H^2(\Pi): \ \text{condition (\ref{2.2}) is satisfied on $\p\Pi$}\}$.

Let us describe random perturbation. Let $\G$ be a periodic lattice in $\RR^n$ with the periodicity cell $\square'$ and $\square:=\{x:\ x'\in\square',\, 0<x_{n+1}<d\}$. By $\Wl=\Wl(x')$ we denote a continuous compactly supported function $\RR^n$, and $\Ws=\Ws(x,\xi)$, $\xi=(\xi_1,\ldots,\xi_{n+1})$, stands for a function in $\RR^{2n+2}$ 1-periodic w.r.t. each of the variables $\xi_i$, $i=1,\ldots,n$, having a zero mean
\begin{equation}\label{2.4}
\int\limits_{(0,1)^{n+1}} \Ws(x,\xi)\di\xi=0\quad \text{for each} \quad x\in\RR^{n+1},
\end{equation}
and compactly supported w.r.t. $x$:
\begin{equation}\label{2.3}
\supp \Ws(\cdot,\xi)\subseteq M\subset \square \quad\text{для всех}\quad \xi\in\RR^{n+1},
\end{equation}
where $M$ is a some fixed set. We assume the following smoothness for function $\Ws$:
\begin{equation}\label{2.21}
\frac{\p^{|\a|+|\b|}\Ws}{\p x^\a \p\xi^\b} \in C(\RR^{2n+2}),\quad \a,\b\in\ZZ_+^n,\quad |\a|\leqslant 3, \quad |\b|\leqslant 1.
\end{equation}

By $W=W(x)$ we denote a continuous function compactly supported in $\square$:
\begin{equation*}
\supp W\Subset \square.
\end{equation*}
Let $S\Subset\square$ be a closed $C^4$-manifold of codimension $1$, $\nu$ be the normal to $S$ outward w.r.t. the domain enveloped by manifold $S$, $\Wd\in C^3(S)$ be a real non-negative function on $S$.

By $\e$ we denote a small positivi parameter. We let:
\begin{equation}\label{2.5}
\begin{aligned}
&\Wloc(x',\e):=\e^{-a} \Wl\left(\frac{x'}{\e}\right), && \e>0,
\\
& \Wosc(x,\e):=\e^{-a} \Ws\left(x,\frac{x}{\e}\right)+\e^{2-2a} W(x), && \e>0,
\\
&\Wloc(x',0):=0,\qquad \Wosc(x',0):=0,
\end{aligned}
\end{equation}
where $0\leqslant a<1$ is a given number.

Let $\om=(\om_k)_{k\in \G}$ be a sequence of independent identically distributed random variables with the values in segment $[0,1]$; the associated distribution measure is denoted by $\mu$. We assume that this measure is defined on $[0,1]$. By $\PP:=\bigotimes_{k\in\G} \mu$ we denote the product of the measures on space $\Om:=\times_{k\in\G} [0,1]$. The elements of the latter space are sequences $(\om_k)_{k\in\G}$. By $\EE(\cdot)$ we denote the expectation value of a random variable w.r.t. probability $\PP$.

The first two types of random perturbation are described by the operators:
\begin{align}
&\Opl:=\Op^0+\sum\limits_{k\in\G} \Wloc(\cdot-k,\e\om_k),
\label{2.6}
\\
&\Ops:=\Op^0+\sum\limits_{k\in\G} \Wosc(\cdot-k,x_{n+1},\e \om_k).
\label{2.7}
\end{align}
The third type corresponds to an operator with a small delta-interaction:
\begin{equation}\label{2.8}
\Opd:=\Op^0+\sum\limits_{k\in\G} \e\om_k \Wd(\cdot-k) \d(\cdot-S_k),
\end{equation}
where $S_k$ is a shift of manifold $S$ by $k$, namely, $S_k:=\{x: (x'-k,x_{n+1})\in S\}$. In all three cases the boundary condition on $\p\Pi$ is described by identity (\ref{2.2}). Notion (\ref{2.8}) is formal for indicating the operator in $L_2(\Pi)$ associated with the sesquilinear form
\begin{equation}\label{2.22}
\fh(u,v):=(\nabla u,\nabla v)_{L_2(\Pi)}+\sum\limits_{k\in\G} \e \om_k \big(\Wd(\cdot-k)u,v\big)_{L_2(S_k)} \quad \text{in}\quad L_2(\Pi).
\end{equation}
The domain of this form is the set of functions in $H^1(\Pi)$ having zero trace on the Dirichlet part of boundary $\p\Pi$. One more equivalent description of operator $\Opd$ is operator $-\D+V_0$ in $\Pi$ with boundary condition (\ref{2.2}) on $\p\Pi$ and the boundary condition
\begin{equation}\label{2.23}
[u]_{S_k}=0,\quad \left[\frac{\p u}{\p\nu}\right]_{S_k}=bu\big|_{S_k}, \quad k\in\G,
\end{equation}
where $[v]_{S_k}=v\big|_{S_k+0}-v\big|_{S_k-0}$ is the jump of function $v$ at $S_k$ being the difference of the values on the external and internal sides of $S_k$.

The main aim of the present work is to obtain initial length scale estimate for operators $\Opl$, $\Ops$, $\Opd$.

To formulate the main results, we shall make use of additional auxiliary notations. Given $\a\in\G$, $N\in\NN$, the symbol $\Pi_{\a,N}$ stands for a piece of layer $\Pi$:
\begin{equation*}
\Pi_{\a,N}:=\bigg\{x:\ x'=\a+\sum\limits_{i=1}^{n}a_i e_i,\ a_i\in(0,N),\ 0<x_{n+1}<d\bigg\}.
\end{equation*}
Here $e_i$, $i=1,\ldots,n$ is the basis of lattice $\G$, i.e.,
\begin{equation*}
\G:=\bigg\{x:\ x'=\sum\limits_{i=1}^{n}a_i e_i,\ a_i\in\ZZ\bigg\}.
\end{equation*}
We also denote
\begin{equation*}
\G_{\a,N}:=\bigg\{x'\in\G:\ x'=\a+\sum\limits_{i=1}^{n}a_i e_i,\ a_i=0,1,\ldots,N-1\bigg\}.
\end{equation*}
We observe that
\begin{equation*}
\overline{\Pi_{\a,N}}:=\bigcup\limits_{k\in\G_{\a,N}} \overline{\square_k}.
\end{equation*}

By $\OplN$, $\OpsN$, $\OpdN$ we denote operators which are introduced in the same way as $\Opl$, $\Ops$, $\Opd$, but on set $\Pi_{\a,N}$ with additional Neumann condition on the lateral boundary. Namely, $\OplN$, $\OpsN$ are the operators
\begin{equation*}
-\D+V_0+\sum\limits_{k\in\G_{\a,N}} \Wloc(\cdot-k,\e\om_k) \quad\text{и}\quad -\D+V_0+\sum\limits_{k\in\G_{\a,N}} \Wosc(\cdot-k,x_{n+1},\e\om_k)
\end{equation*}
in $\Pi$ subject to boundary condition (\ref{2.2}) on the upper and lower boundaries and subject to the boundary condition
\begin{equation}\label{2.9}
\frac{\p u}{\p\nu}=0\quad \text{на}\quad \p\Pi_{\a,N}\setminus\p\Pi,
\end{equation}
where $\nu$ is the outward norm to the boundary. Operator $\OpdN$ is introduced by the (formal) identity
\begin{equation*}
\OpdN:=-\D+V_0 + \sum\limits_{k\in\G_{\a,N}} \e \om_k \Wd(\cdot-k) \d(\cdot-S_k)
\end{equation*}
in $\Pi_{\a,N}$ subject to boundary condition (\ref{2.2}) on upper and lower boundaries and subject to boundary condition (\ref{2.9}). One can define it rigorously by means of sesquilinear form similar to (\ref{2.22}) or by means of boundary conditions (\ref{2.23}) for $k\in\G_{\a,N}$.

Let $\lN{\sharp}$, $\sharp=\mathrm{loc}, \mathrm{osc}, \mathrm{dlt}$ be the minimal eigenvalue of operators $\OplN$, $\OpsN$, $\OpdN$, and $\L_0$ be the minimal eigenvalue of the operator
\begin{equation*}
-\frac{d^2}{dx_{n+1}^2}+V_0\quad \text{на}\quad (0,d)
\end{equation*}
subject to boundary condition (\ref{2.2}) at the end-points. The eigenfunction associated $\L_0$ is denoted by $\psi_0=\psi_0(x_{n+1})$ and it is assumed to be normalized in $L_2(0,d)$.

Our first result provides an important lower deterministic estimate for the difference $\lN{\sharp}-\L_0$.

\begin{theorem}\label{th1loc}
Suppose that $n=1$, the origin lies in $\square'$ and
\begin{equation}\label{2.19}
\int\limits_{\RR} \Wl(\z) \di\z>0.
\end{equation}
Then there exist positive constants $c_1$, $c_2$, $N_1$ such that for
\begin{equation}\label{2.10}
N\geqslant N_1\quad \text{and} \quad 0<\e<\frac{c_1}{N^\frac{8}{1-a}}
\end{equation}
the estimate
\begin{equation}\label{2.11}
\lN{loc}-\L_0\geqslant \frac{c_2\e^{1-a}}{N}\sum\limits_{k\in \G_{\a,N}} \om_k^{1-a}
\end{equation}
holds true.
\end{theorem}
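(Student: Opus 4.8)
The plan is to reduce the eigenvalue estimate to a one-dimensional spectral problem on the cross-section perturbed by a rescaled short-range potential, and then to apply a Rayleigh-quotient argument combined with a careful analysis of the ground state of a one-dimensional Schr\"odinger operator with a tall, narrow potential bump. Since $n=1$, each cell $\square_k$ contains a single perturbing potential $\Wloc(\cdot-k,\e\om_k)=(\e\om_k)^{-a}\Wl\big((\cdot-k)/(\e\om_k)\big)$, which is supported in a $x_1$-interval of length $O(\e\om_k)$ and has height $O((\e\om_k)^{-a})$, hence $L_1$-mass of order $(\e\om_k)^{1-a}$ by \eqref{2.19}. The first step is to write $\OplN = \Op^0 + \sum_{k}\Wloc(\cdot-k,\e\om_k)$ and use the variational (min-max) characterization: it suffices to exhibit a suitable test function, or better, to bound from below the quadratic form $(\OplN u,u) - \L_0\|u\|^2$ on the eigenfunction, showing it forces the claimed gap.

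The core idea for the lower bound is a separation-of-variables perturbative argument. Write a general $u$ on $\Pi_{\a,N}$ as $u(x) = g(x')\psi_0(x_{n+1}) + u^\perp(x)$, where $u^\perp$ is $L_2(0,d)$-orthogonal to $\psi_0$ for each $x'$. Because the second transverse eigenvalue $\L_1 > \L_0$ has a fixed gap, the contribution of $u^\perp$ to $(\Op^0 u,u) - \L_0\|u\|^2$ is bounded below by $(\L_1-\L_0)\|u^\perp\|^2$, which dominates the (sign-indefinite) cross terms coming from the perturbation provided $\e$ is small; this is where one needs the smallness condition on $\e$ relative to $N$. The transverse-averaged problem then becomes: estimate from below $\lambda_{\min}$ of $-\frac{d^2}{dx_1^2} + \L_0 + \sum_{k\in\G_{\a,N}} b_k(\e\om_k)^{-a}\widetilde\Wl\big((\cdot-k)/(\e\om_k)\big)$ on the interval $\Pi_{\a,N}\cap\RR$ with Neumann conditions, where $b_k = \int_0^d |\psi_0|^2\,dx_{n+1}$-type weights (positive, bounded above and below). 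Subtracting $\L_0$, one must show $\lambda_{\min} - \L_0 \geq \tfrac{c_2\e^{1-a}}{N}\sum_k \om_k^{1-a}$.

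For this one-dimensional effective problem I would use the following mechanism: the minimal eigenvalue $\lambda_{\min}-\L_0$ of $-\frac{d^2}{dx_1^2} + P$ with $P = \sum_k P_k\geq 0$ a sum of well-separated, tall, narrow nonnegative bumps on an interval of length $N$ (in lattice units) with Neumann ends is bounded below by $c\,N^{-1}\sum_k (\text{mass of }P_k)$ up to an error controlled by the width of the bumps. Concretely, each $P_k$ with $L_1$-mass $m_k\sim(\e\om_k)^{1-a}$ and width $\sim\e\om_k$ contributes, via a standard one-dimensional ground-state comparison (e.g. testing against the true ground state $\phi$ of the full operator and using $\|\phi'\|^2 + \sum_k\int P_k|\phi|^2 = (\lambda_{\min}-\L_0)\|\phi\|^2$), a lower bound $\sum_k m_k \min|\phi|^2 \lesssim (\lambda_{\min}-\L_0)\|\phi\|^2$, while a Poincar\'e/Sobolev-type inequality on the interval of length $N$ gives $\min|\phi|^2 \geq \|\phi\|^2/N - C\|\phi'\|^2$; feeding this back and reabsorbing yields $(\lambda_{\min}-\L_0)(1 + C'N\sum_k m_k) \geq c N^{-1}\sum_k m_k$, hence $\lambda_{\min}-\L_0\gtrsim N^{-1}\sum_k m_k\gtrsim N^{-1}\e^{1-a}\sum_k\om_k^{1-a}$ once $\e^{1-a}\sum_k m_k$ is small, which again is ensured by \eqref{2.10}. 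The exponent $8/(1-a)$ in \eqref{2.10} presumably absorbs all the accumulated error terms (transverse correction, bump-width corrections, the $N$-factor from Poincar\'e, the crude bound $\sum_k\om_k^{1-a}\leq N$).

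The main obstacle I anticipate is the interplay of the narrow-bump regularization with the error bounds: because the potential is genuinely singular (height blowing up as $\e\to 0$), one cannot treat it perturbatively in operator norm, and the scaling $x_1\mapsto x_1/(\e\om_k)$ near each bump must be matched carefully to the global scale $N$ so that the mass $(\e\om_k)^{1-a}$ survives while the $H^1$-errors from the boundary layers near each bump (of size controlled by $\|\phi'\|^2$ on windows of width $\e\om_k$) are provably smaller than the gain. Making the constants uniform in $\om\in[0,1]^{\G_{\a,N}}$ and in the lattice position $\a$, and keeping the $\e$-threshold a clean power of $N$, is the delicate bookkeeping; the rest is a fairly standard combination of variational estimates and one-dimensional ODE ground-state comparison.
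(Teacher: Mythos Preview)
Your approach is genuinely different from the paper's, and it contains a real gap. At the crucial step you assume that the effective one-dimensional potential $P=\sum_k P_k$ is nonnegative (``tall, narrow nonnegative bumps''), and your lower bound $\sum_k m_k\min|\phi|^2\leqslant C(\lambda_{\min}-\L_0)\|\phi\|^2$ relies on this. But the theorem assumes only $\int_{\RR}\Wl(\z)\di\z>0$, not $\Wl\geqslant 0$; for a sign-changing $\Wl$ the ground state $\phi$ can dip precisely where $\Wl>0$ and bulge where $\Wl<0$, so $\int P_k|\phi|^2$ need not be bounded below by $m_k\min|\phi|^2$ and can even be negative. Your reduction to the transverse-averaged problem is likewise not justified as written: the perturbation has $L_\infty$-norm of order $\e^{-a}$, so the cross terms and the $|u^\perp|^2$-terms coming from $\Wloc$ are not obviously dominated by the fixed transverse gap; an extra argument exploiting the small support of $\Wloc$ would be needed, and you have not supplied it.

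The paper avoids both difficulties by a different mechanism. Rather than a direct variational estimate, it conjugates $\OplN$ by a multiplication operator $\Vg{loc}$ whose multiplier is $Q_{\mathrm{loc}}=1+\sum_k(\e\om_k)^{2-a}\Wl_*\big((\cdot-k)/(\e\om_k)\big)\chi$, with $\Wl_*$ a second antiderivative of $\Wl$. Because $\frac{d^2}{d\xi^2}\Wl_*=\Wl$, this conjugation cancels the singular potential $(\e\om_k)^{-a}\Wl\big(\cdot/(\e\om_k)\big)$ and replaces it by a first-order differential operator with coefficients that are \emph{uniformly bounded} in $\e$ and carry an overall factor $(\e\om_k)^{1-a}$. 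The conjugated operator then fits the abstract ``small regular perturbation'' framework of Theorem~\ref{th1gen} with $\pL_1=0$ and $\pL_2$ a first-order operator, after passing to the new small parameter $\e^{(1-a)/2}$ and new random variables $\om_k^{(1-a)/2}$; Assumption~(\ref{as2}) reduces to $(\pL_2(t)\psi_0,\psi_0)_{L_2(\square)}\to\int_{\RR}\Wl(\z)\di\z>0$, and no pointwise sign condition on $\Wl$ is ever used. This regularization-by-conjugation is the key idea your proposal is missing.
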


By $\Ws_*=\Ws_*(x,\xi)$ we denote the solution to the equation
\begin{equation}\label{2.12}
\D_{\xi}\Ws_*(x,\xi)=\Ws(x,\xi),\quad \xi\in(0,1)^{n+1}
\end{equation}
subject to periodic boundary conditions obeying the orthogonality condition:
\begin{equation}\label{2.20}
\int\limits_{(0,1)^{n+1}} \Ws_*(x,\xi)\di\xi=0,\quad x\in\RR^{n+1}.
\end{equation}
By identity (\ref{2.4}), such problem for $\Ws_*$ is uniquely solvable. Moreover, it follows from (\ref{2.21}) that function $\Ws_*$ has at least the same smoothness as $\Ws$.

\begin{theorem}\label{th1osc}
Suppose that $n\geqslant 1$,
\begin{equation}\label{2.18}
\int\limits_{\square} W(x)\psi_0^2(x_{n+1})\di x-\int\limits_{\square} \di x\, \psi_0^2(x_{n+1}) \int\limits_{(0,1)^{n+1}} |\nabla_{\xi} \Ws_*(x,\xi)|^2\di\xi>0.
\end{equation}
Then there exist positive constants $c_1$, $c_2$, $N_1$ such that for
\begin{equation}\label{2.13}
N\geqslant N_1\quad \text{и} \quad 0<\e<\frac{c_1}{N^\frac{4}{1-a}}
\end{equation}
the estimate
\begin{equation}\label{2.14}
\lN{osc}-\L_0\geqslant \frac{c_2\e^{2-2a}}{N^n}\sum\limits_{k\in \G_{\a,N}} \om_k^{2-2a}
\end{equation}
holds true.
\end{theorem}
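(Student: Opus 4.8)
Following the strategy of \cite{BGV15}, I would work with the Rayleigh quotient of $\OpsN$, split off the transverse ground state $\psi_0$, and reduce everything to a cell-wise quadratic-form estimate for the perturbation; the new feature is that the leading, $O(\e^{-a})$, part of $\Wosc$ is not form-small and must be tamed by a homogenization (corrector) substitution, which is where \eqref{2.18} enters. (Neumann bracketing into the single cells $\square_k$ would be too crude, as it replaces the average $N^{-n}\sum_k\om_k^{2-2a}$ by a minimum.) Write $\mathcal{W}_k:=\Wosc(\cdot-k,x_{n+1},\e\om_k)$. By the minimax principle $\lN{osc}$ is the infimum of $\mathfrak{q}[u]/\|u\|^2$ over the form domain of $\OpsN$, where $\mathfrak{q}$ is its form; fix a normalized minimizer $u$ and set $u=f(x')\psi_0(x_{n+1})+u^\perp$ with $\int_0^d u^\perp(x',x_{n+1})\psi_0(x_{n+1})\di x_{n+1}=0$. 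Orthogonality and the eigenvalue equation for $\psi_0$ give the identity
\begin{equation*}
\lN{osc}-\L_0=\|\nabla_{x'}f\|^2+\|\nabla_{x'}u^\perp\|^2+\Big(\big(-\tfrac{d^2}{dx_{n+1}^2}+V_0-\L_0\big)u^\perp,u^\perp\Big)+\sum_{k\in\G_{\a,N}}(\mathcal{W}_k u,u)_{L_2(\square_k)},
\end{equation*}
the first two summands being nonnegative and the third $\geqslant(\L_1-\L_0)\|u^\perp\|^2$, with $\L_1>\L_0$ the second eigenvalue of $-\frac{d^2}{dx_{n+1}^2}+V_0$ on $(0,d)$. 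If $\lN{osc}-\L_0\geqslant c_3N^{-2}$ for a fixed small $c_3$, then \eqref{2.14} holds trivially, since its right-hand side does not exceed $c_2\e^{2-2a}\leqslant c_2c_1^{2-2a}N^{-8}\ll c_3N^{-2}$ for $N\geqslant N_1$; so one may assume $\lN{osc}-\L_0<c_3N^{-2}$, whence $\|\nabla_{x'}f\|$, $\|u^\perp\|$ are small, a Poincar\'e inequality on $\Pi_{\a,N}$ makes $f$ close to its mean $\overline f$ with $|\overline f|^2\gtrsim(N^n|\square'|)^{-1}$, and one checks that $\sum_k\om_k^{2-2a}\int_{\square'_k}|f|^2\gtrsim N^{-n}\sum_k\om_k^{2-2a}$.

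The core step is a cell-wise homogenization lemma: for every $k$,
\begin{equation*}
(\mathcal{W}_k u,u)_{L_2(\square_k)}\geqslant(\e\om_k)^{2-2a}\int\limits_{\square_k}\Big(W(y-k)-\!\!\int\limits_{(0,1)^{n+1}}\!\!|\nabla_\xi\Ws_*(y-k,\xi)|^2\di\xi\Big)|u(y)|^2\di y-R_k,
\end{equation*}
where the remainder $R_k$ is controlled by higher powers of $\e\om_k$ times norms of $u$ over $\square_k$. To prove it I would substitute $u=\big(1+(\e\om_k)^{2-a}\Ws_*(y-k,(y-k)/(\e\om_k))+\dots\big)w$, use the defining equation $\D_\xi\Ws_*=\Ws$, the zero mean \eqref{2.4}, and the pointwise identity $\Ws\Ws_*=\tfrac12\D_\xi(\Ws_*^2)-|\nabla_\xi\Ws_*|^2$, and integrate by parts to transfer the rapid oscillations onto $\nabla w$; the relation $\int_{(0,1)^{n+1}}\Ws\Ws_*\di\xi=-\int_{(0,1)^{n+1}}|\nabla_\xi\Ws_*|^2\di\xi$ together with the $W$-term produces the first summand above. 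The crucial subtlety is that the first surviving cross term, of order $(\e\om_k)^{1-a}$ and shaped like $\int(\nabla_\xi\Ws_*)(y-k,(y-k)/(\e\om_k))\cdot\overline w\,\nabla w\,\di y$, must be \emph{cancelled} by a further corrector coupling to $\nabla w$ rather than merely estimated by the Cauchy--Schwarz inequality; otherwise the constant in front of $|\nabla_\xi\Ws_*|^2$ would be wrong and \eqref{2.18} would no longer be the right condition. The remaining zero-mean leftovers are removed by iterating the same device a bounded number of times; the smoothness \eqref{2.21} makes the correctors admissible, and elliptic regularity for $w$, which solves an equation with a \emph{bounded} effective potential, keeps $\sum_k R_k$ subordinate.

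To assemble the estimate, put $\overline G(x'):=\int_0^d\big(W-\int_{(0,1)^{n+1}}|\nabla_\xi\Ws_*|^2\di\xi\big)\psi_0^2\di x_{n+1}$, a $\square'$-periodic function with $\int_{\square'}\overline G=\kappa$, where $\kappa>0$ is the left-hand side of \eqref{2.18}. In the lemma one replaces $|u|^2$ by $|f|^2\psi_0^2$ (error $\lesssim\e^{2-2a}\|u^\perp\|$) and then, cell by cell, $\overline G$ by its mean $\kappa/|\square'|$, exploiting that $\overline G-\kappa/|\square'|$ has zero mean over $\square'$ (integration by parts in $x'$ yields an error $\lesssim\e^{2-2a}\|\nabla_{x'}f\|$). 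Summing over $k$ and using the lower bound for $\sum_k\om_k^{2-2a}\int_{\square'_k}|f|^2$ from the first paragraph gives
$\sum_k(\mathcal{W}_k u,u)_{L_2(\square_k)}\geqslant c\,\e^{2-2a}N^{-n}\sum_k\om_k^{2-2a}-C\e^{2-2a}\big(\|\nabla_{x'}f\|+\|u^\perp\|\big)-\sum_k R_k$.
Substituting this into the identity of the first paragraph, discarding $\|\nabla_{x'}u^\perp\|^2$, completing the squares $\|\nabla_{x'}f\|^2-C\e^{2-2a}\|\nabla_{x'}f\|\geqslant-C'\e^{4-4a}$ and $(\L_1-\L_0)\|u^\perp\|^2-C\e^{2-2a}\|u^\perp\|\geqslant-C'\e^{4-4a}$, and checking that with $\sum_k R_k$ included all the error terms stay negligible against $c\,\e^{2-2a}N^{-n}\sum_k\om_k^{2-2a}$ — this is exactly what the threshold \eqref{2.13} guarantees — one arrives at \eqref{2.14} with a suitable $c_2$; the case $\sum_k\om_k^{2-2a}=0$ is trivial, $\OpsN$ being then $-\D+V_0$.

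The main obstacle is the homogenization lemma: a lower bound for the quadratic form of the large fast-oscillating potential $(\e\om_k)^{-a}\Ws(\cdot,\cdot/(\e\om_k))$ with the \emph{sharp} effective constant forced by \eqref{2.18}, and with remainders that are genuinely subordinate both in $\e$ and, after summation over the $N^n$ cells, in $N$. The delicate mechanism is that the first cross term generated by the corrector must be cancelled, not absorbed, which drags in a finite chain of correctors coupled to $\nabla w$ and hence a careful use of elliptic regularity for the transformed eigenfunction; tracking the $\e$- and $N$-powers of all the resulting error terms is what dictates the exponent $4/(1-a)$ in \eqref{2.13}.
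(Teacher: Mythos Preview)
Your route differs from the paper's. The paper does not analyze the Rayleigh quotient of $\OpsN$ directly; it conjugates by the multiplication operator $\Vg{osc}$, multiplication by $1+\sum_k(\e\om_k)^{2-a}\Ws_*(\cdot-k,\cdot/(\e\om_k))$ (your corrector, lifted to the operator level), and checks that the conjugated operator is of the regular form (\ref{3.6}) with bounded coefficients. Taking $\e^{1-a}$ as the new small parameter and $\om_k^{1-a}$ as new random variables, the paper then verifies Assumptions~(\ref{as1}),~(\ref{as2}) of \cite{BGV15} and the two-sided bound (\ref{3.5}), and invokes Theorem~\ref{th1gen} as a black box. Your decomposition $u=f\psi_0+u^\perp$, Poincar\'e step and square-completion are essentially a hands-on re-derivation of what Theorem~\ref{th1gen} already supplies for regular perturbations; the paper buys modularity by citing it.

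One point to flag: you phrase the core step as a lower bound on $(\mathcal{W}_k u,u)_{L_2(\square_k)}$ \emph{alone} with remainder ``higher powers of $\e\om_k$ times norms of $u$''. That cannot hold as stated, because the $(\e\om_k)^{1-a}$ cross term you isolate only cancels against the kinetic part $\|\nabla u\|^2$, not inside the potential term; your own substitution $u=(1+c)w$ achieves precisely this cancellation when applied to the \emph{full} quadratic form, not to $(\mathcal{W}_k u,u)$ in isolation. The paper's conjugation does this automatically and then shows that a \emph{single} corrector already suffices: the resulting first-order piece $\pL_1(t)$ satisfies $(\pL_1(t)\psi_0,\psi_0)_{L_2(\square)}=0$ by construction, and the induced cell-problem solution $U$ from (\ref{3.2}) contributes only $O(t^{2/(1-a)})$ to (\ref{3.4}). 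Your ``further corrector coupling to $\nabla w$'' is thus an over-complication born of having split the Laplacian off from the potential prematurely; if you keep the full form together (equivalently, conjugate first), one corrector and an averaging lemma of the type of Lemma~\ref{lm5.1} are enough.
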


\begin{theorem}\label{th1dlt}
Suppose that $n\geqslant 1$,
\begin{equation}\label{2.15}
\int\limits_{S} \Wd(x)\psi_0^2(x_{n+1})\di S>0.
\end{equation}
Then there exist positive constants $c_1$, $c_2$, $N_1$ such that for
\begin{equation}\label{2.16}
N\geqslant N_1\quad \text{и} \quad 0<\e<\frac{c_1}{N^8}
\end{equation}
the estimate
\begin{equation}\label{2.17}
\lN{dlt}-\L_0\geqslant \frac{c_2\e}{N^n}\sum\limits_{k\in \G_{\a,N}} \om_k
\end{equation}
holds true.
\end{theorem}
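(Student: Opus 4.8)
The plan is to argue by the variational principle, mimicking the scheme used for Theorems~\ref{th1loc} and \ref{th1osc}. Since $\Pi_{\a,N}$ is bounded, $\OpdN$ has discrete spectrum and $\lN{dlt}=\min_u \fh(u,u)/\|u\|_{L_2(\Pi_{\a,N})}^2$, where (cf. (\ref{2.22})) $\fh(u,u)=\|\nabla u\|_{L_2(\Pi_{\a,N})}^2+(V_0u,u)_{L_2(\Pi_{\a,N})}+\e\sum_{k\in\G_{\a,N}}\om_k\int_{S_k}\Wd(\cdot-k)|u|^2\di S$; let $u$ be a minimiser with $\|u\|_{L_2(\Pi_{\a,N})}=1$. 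I would expand $u$ in the orthonormal eigenbasis $\{\psi_j\}_{j\geqslant0}$ of the transverse operator $-\frac{d^2}{dx_{n+1}^2}+V_0$ on $(0,d)$ (eigenvalues $\L_0<\L_1\leqslant\L_2\leqslant\dots$, the bottom one simple), $u(x)=\sum_{j\geqslant0}c_j(x')\psi_j(x_{n+1})$, and use $\|\p_{x_{n+1}}u\|^2+(V_0u,u)=\sum_j\L_j\|c_j\|_{L_2(\square'_{\a,N})}^2$ together with $\sum_j\|c_j\|_{L_2(\square'_{\a,N})}^2=1$ (with $\square'_{\a,N}$ the cross-section of $\Pi_{\a,N}$) to obtain the exact identity
\[ \lN{dlt}-\L_0=\sum_{j\geqslant0}\|\nabla_{x'}c_j\|_{L_2(\square'_{\a,N})}^2+\sum_{j\geqslant1}(\L_j-\L_0)\|c_j\|_{L_2(\square'_{\a,N})}^2+\e\sum_{k\in\G_{\a,N}}\om_k\int_{S_k}\Wd(\cdot-k)|u|^2\di S , \]
a sum of three non-negative groups; in particular $\lN{dlt}\geqslant\L_0$, and the point is the quantitative gain.

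Next I would derive an upper bound of the right order. Testing the Rayleigh quotient with $u_0:=(|\square'|N^n)^{-1/2}\psi_0$, which is normalised and has transverse energy exactly $\L_0$, and changing variables $S_k\mapsto S$ in the surface integrals, gives
\[ \lN{dlt}-\L_0\leqslant\frac{\e}{|\square'|N^n}\Big(\int_S\Wd(x)\psi_0^2(x_{n+1})\di S\Big)\sum_{k\in\G_{\a,N}}\om_k=:C_0\,\e\,\si ,\qquad \si:=\frac1{N^n}\sum_{k\in\G_{\a,N}}\om_k\in[0,1] . \]
Feeding this into the identity above and dropping the remaining non-negative terms yields the a priori bounds $\|\nabla_{x'}v\|_{L_2(\square'_{\a,N})}^2\leqslant C_0\e\si$ and $\|u^\perp\|_{H^1(\Pi_{\a,N})}^2\leqslant C\e\si$, where $v:=c_0$ and $u^\perp:=u-v\psi_0$; the second bound uses $\L_j-\L_0\geqslant\L_1-\L_0>0$ for $j\geqslant1$ and the boundedness of $V_0$. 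The essential feature is that these bounds carry the weight $\si$.

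For the lower bound on the delta-term, let $\bar v$ be the mean of $v$ over $\square'_{\a,N}$. By the Poincar\'e inequality on $\square'_{\a,N}$, whose Poincar\'e constant is $\leqslant C_PN^2$, one has $\|v-\bar v\|_{L_2(\square'_{\a,N})}^2\leqslant C_PN^2\|\nabla_{x'}v\|_{L_2}^2\leqslant C_PC_0N^2\e\si$; combined with $\|v\|_{L_2(\square'_{\a,N})}^2=1-\|u^\perp\|_{L_2}^2$ and $\e<c_1N^{-8}$, this forces $|\bar v|^2|\square'|N^n\geqslant\tfrac12$ once $c_1$ is small. Using $|u|^2\geqslant\tfrac12|\bar v|^2\psi_0^2-|(v-\bar v)\psi_0+u^\perp|^2$ pointwise on $S_k$ together with $\Wd\geqslant0$,
\[ \e\sum_k\om_k\int_{S_k}\Wd(\cdot-k)|u|^2\di S\geqslant\frac{\e|\bar v|^2}{2}\Big(\int_S\Wd\psi_0^2\di S\Big)\sum_k\om_k-\e\|\Wd\|_{C(S)}\sum_k\int_{S_k}\big|(v-\bar v)\psi_0+u^\perp\big|^2\di S . \]
The first term is $\geqslant c\,\e\,\si$ with $c>0$ by (\ref{2.15}); the error term I would handle cell by cell via the trace inequality $\|w\|_{L_2(S_k)}^2\leqslant C_{\mathrm{tr}}\|w\|_{H^1(\square_k)}^2$ (the constant $C_{\mathrm{tr}}$ independent of $k$ and $N$ by translation invariance, and finite because $S$ is a $C^4$-manifold), summed over $k$, to get a bound $C(\|v-\bar v\|_{L_2}^2+\|\nabla_{x'}v\|_{L_2}^2+\|u^\perp\|_{H^1}^2)\leqslant CN^2\e\si$; hence the error is at most $CN^2\e^2\si$. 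As $\e<c_1N^{-8}$, this is $\leqslant\tfrac12 c\,\e\,\si$ for $c_1$ small, so finally $\lN{dlt}-\L_0\geqslant\e\sum_k\om_k\int_{S_k}\Wd(\cdot-k)|u|^2\di S\geqslant\tfrac12 c\,\e\,\si$, which is (\ref{2.17}) with a suitable $c_2>0$.

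The hard part is exactly the error term in the delta-interaction: being a trace on a set of codimension one, it is not controlled by $L_2$-norms alone but only through $\|v-\bar v\|_{H^1}$, and since the Poincar\'e constant of $\square'_{\a,N}$ grows like $N^2$, a crude estimate is worthless when $\sum_k\om_k$ is small. The cure, as above, is to perform the (easy) upper-bound step first so that the a priori bounds come out proportional to $\si=N^{-n}\sum_k\om_k$; together with $\e$ polynomially small in $N$ this makes the error negligible against the main term uniformly in $\om\in\Om$. Apart from these trace estimates near the $S_k$, the argument runs parallel to the proofs of Theorems~\ref{th1loc} and \ref{th1osc}.
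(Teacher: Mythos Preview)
Your argument is correct, but it takes a route entirely different from the paper's. The paper does not argue variationally at all: it invokes a structural lemma (Lemma~\ref{lm6.1}, based on \cite{MPAG07,AHP07}) that produces a change of variables and a multiplication operator $\Vg{dlt}$ conjugating $\OpdN$ into an ordinary second-order differential operator $-\D+V_0+\sum_{k}\e\om_k\,\S(k)\mathcal{M}(\e\om_k)\S(-k)$ with bounded piecewise-continuous coefficients. With new small parameter $\e^{1/2}$ and new random variables $\om_k^{1/2}$ this fits the abstract template (\ref{3.6}) with $\pL_1=0$, $\pL_2(t)=\mathcal{M}(t^{1/2})$, $\pL_3=0$; Assumption~(\ref{as1}) is trivial, and (\ref{as2}) follows from Lemma~\ref{lm6.1}(\ref{lm6.1it4}) together with (\ref{2.15}). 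Theorem~\ref{th1gen} then gives (\ref{2.17}) directly. In particular, the paper's proofs of Theorems~\ref{th1loc}, \ref{th1osc}, \ref{th1dlt} are all of this ``conjugate away the singularity, then quote the abstract result'' type, so your closing remark that your scheme mimics those proofs is inaccurate. What your direct variational proof buys is self-containment and transparency: no external change-of-variables lemma is needed, and one sees explicitly how $\Wd\geqslant0$, the trace inequality, and the a~priori upper bound (weighted by $\si$) combine; moreover your argument would run already under the weaker condition $\e\lesssim N^{-2}$. What the paper's approach buys is economy across the whole section: once the conjugation is in place, Theorems~\ref{th2dlt}--\ref{th4dlt} follow immediately from the abstract Theorems~\ref{th2gen}--\ref{th4gen}, whereas your method handles only the eigenvalue estimate.
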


Our next deterministic results describe Combes-Thomas estimates for the considered operators. We denote by $\chi_B=\chi_B(x)$ the characteristic function of a set $B\subseteq\Pi$, by $\|\cdot\|_{X\to Y}$ we denote the norm of an operator acting from a Banach space $X$ into a Banach space $Y$, $\spec(\cdot)$ stands for the spectrum of an operator.

\begin{theorem}\label{th2loc}
Suppose that $\a, \b_1, \b_2\in\G$, $m_1, m_2\in\NN$ are such that ${B_1:=\Pi_{\b_1,m_1}\subset \Pi_{\a,N}}$, ${B_2:=\Pi_{\b_2,m_2}\subset \Pi_{\a,N}}$ and the assumption of Theorem~\ref{th1loc} is satisfied. Then there exists $N_2\in\NN$ such that for ${N\geqslant N_2}$ the estimate
\begin{equation*}
\|\chi_{B_1}(\OplN -\l)^{-1}\chi_{B_2}\|_{L_2(\Pi_{\a,N})\to L_2(\Pi_{\a,N})} \leqslant \frac{C_1}{\d} \E^{-C_2\d \dist(B_1, B_2)}
\end{equation*}
holds true, where $\d:=\dist(\l,\spec(\OplN))>0$, $C_1$, $C_2$ are positive constants independent of $\e$, $\a$, $N$, $\d$, $\b_1$, $\b_2$, $m_1$, $m_2$, $\l$.
\end{theorem}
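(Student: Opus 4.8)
The plan is to use the Combes--Thomas method; the only point that is not entirely routine is the uniform (in $\e$) control of the singular potential. Fix $\l$ with $\d:=\dist(\l,\spec(\OplN))>0$ and let $A=A(x)$ be a bounded Lipschitz function on $\Pi_{\a,N}$ with $|\nabla A|\leqslant 1$ almost everywhere, such that $A\equiv 0$ on $B_2$ and $A\geqslant\dist(B_1,B_2)$ on $B_1$ — for instance $A(x):=\min\big(\dist(x,B_2),\dist(B_1,B_2)\big)$. Multiplication by $\E^{\pm\beta A}$ preserves $H^1(\Pi_{\a,N})$ and the Dirichlet trace on $\p\Pi$, so the conjugated operator $\E^{\beta A}\OplN\E^{-\beta A}$ is well defined through its quadratic form, and it equals $\OplN+\beta\mathcal{C}-\beta^2\mathcal{D}$, where $\mathcal{D}$ is multiplication by $|\nabla A|^2$, hence $\|\mathcal{D}\|\leqslant 1$, and $\mathcal{C}$ is the first-order term with $(\mathcal{C}u,v)=(\nabla u,(\nabla A)v)-((\nabla A)u,\nabla v)$, so $|(\mathcal{C}u,v)|\leqslant\|\nabla u\|\,\|v\|+\|u\|\,\|\nabla v\|$. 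If we can choose $\beta=c\min(1,\d)$ with $c$ a uniform constant so that $\E^{\beta A}(\OplN-\l)\E^{-\beta A}$ is boundedly invertible on $L_2(\Pi_{\a,N})$ with inverse of norm $\leqslant C/\d$, then from
\begin{equation*}
\chi_{B_1}(\OplN-\l)^{-1}\chi_{B_2}=\big(\chi_{B_1}\E^{-\beta A}\big)\big(\E^{\beta A}(\OplN-\l)\E^{-\beta A}\big)^{-1}\big(\E^{\beta A}\chi_{B_2}\big)
\end{equation*}
together with $\|\chi_{B_1}\E^{-\beta A}\|\leqslant\E^{-\beta\dist(B_1,B_2)}$, $\|\E^{\beta A}\chi_{B_2}\|\leqslant 1$ we obtain the assertion, the case of bounded $\dist(B_1,B_2)$ being absorbed into the trivial bound $\|(\OplN-\l)^{-1}\|=1/\d$.

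The core of the argument is an a priori resolvent-gradient estimate,
\begin{equation*}
\big\|\nabla(\OplN-\l)^{-1}\big\|^2_{L_2(\Pi_{\a,N})\to L_2(\Pi_{\a,N})}\leqslant C\,\d^{-2},
\end{equation*}
with $C$ independent of $\e,\a,N$ (for $\l$ in the bounded energy window relevant to the multiscale analysis). Put $v:=(\OplN-\l)^{-1}u$. The form of $\OplN$ gives
\begin{equation*}
\|\nabla v\|^2=\RE(\OplN v,v)-(V_0v,v)-\sum_{k\in\G_{\a,N}}\big(\Wloc(\cdot-k,\e\om_k)v,v\big),
\end{equation*}
and $\RE(\OplN v,v)=\RE(u,v)+\RE\l\,\|v\|^2\leqslant\|u\|\,\|v\|+\RE\l\,\|v\|^2$; since $V_0$ is bounded, the whole estimate reduces to bounding the sum over $k$ uniformly in $\e$, which is the crux, because $\|\Wloc(\cdot,\e)\|_{L_\infty}=\e^{-a}\|\Wl\|_{L_\infty}\to\infty$ as $\e\to 0$.

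Here one uses that $n=1$, so $\Pi$ is two-dimensional and $\Wloc(\cdot-k,\e\om_k)$ is supported, for $\e$ small, in a rectangle inside $\square_k$ of width $\sim\e\om_k$ times $(0,d)$, hence of area at most $C\e\om_k$. Combining the Sobolev embedding $H^1\hookrightarrow L_p$, which holds for every finite $p$ in dimension two with a constant independent of the cell by translation invariance, with H\"older's inequality on this small support gives
\begin{equation*}
\big|\big(\Wloc(\cdot-k,\e\om_k)v,v\big)\big|\leqslant C(\e\om_k)^{1-\frac{2}{p}-a}\|v\|^2_{H^1(\square_k)}\leqslant C\,\e^{\k}\|v\|^2_{H^1(\square_k)},
\end{equation*}
where $\k:=1-\frac{2}{p}-a>0$ once $p$ is chosen large enough, which is possible because $a<1$, and where we used $\om_k\in[0,1]$. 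Summing over the pairwise disjoint cells,
\begin{equation*}
\Big|\sum_{k\in\G_{\a,N}}\big(\Wloc(\cdot-k,\e\om_k)v,v\big)\Big|\leqslant C\,\e^{\k}\|v\|^2_{H^1(\Pi_{\a,N})}=C\,\e^{\k}\big(\|v\|^2+\|\nabla v\|^2\big),
\end{equation*}
with $C$ independent of $\e,\a,N$. For $\e$ small enough that $C\e^{\k}\leqslant\tfrac12$ this term is absorbed into $\tfrac12\|\nabla v\|^2$ plus a uniform multiple of $\|v\|^2$; the same computation applied to an arbitrary $w$ also shows $\OplN\geqslant-C_0$ uniformly in $\e,\a,N$, so the relevant $\l$ do have $\RE\l$ bounded above. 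Together with $\|v\|\leqslant\d^{-1}\|u\|$ this yields the claimed gradient estimate.

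With the gradient estimate in hand, the remaining steps are the standard Combes--Thomas bookkeeping. Using $\OplN\geqslant-C_0$ one passes to $K:=\OplN+M\geqslant 1$ with $M$ a uniform constant, factorizes $\E^{\beta A}(\OplN-\l)\E^{-\beta A}=K^{-1/2}(\OplN-\l)\big(I+\mathcal{R}_\beta\big)K^{1/2}$, and checks, via the gradient estimate, that $\|K^{-1/2}\mathcal{C}K^{-1/2}\|\leqslant C$ and $\|(\OplN-\l)^{-1}K\|\leqslant C/\d$, so that $\|\mathcal{R}_\beta\|\leqslant\tfrac12$ for $\beta=c\min(1,\d)$ with $c$ uniform; then the Neumann series gives $\|(\E^{\beta A}(\OplN-\l)\E^{-\beta A})^{-1}\|\leqslant C/\d$, and the three factors are assembled as above. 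All constants produced depend only on $V_0$, $\Wl$, $d$, hence on nothing else in the list. The step I expect to be the main obstacle is exactly the uniform-in-$\e$ absorption of $\sum_k(\Wloc(\cdot-k,\e\om_k)v,v)$: without the two-dimensional Sobolev trick that trades the $\e^{-a}$ blow-up of $\|\Wloc\|_{L_\infty}$ against the shrinking $\sim\e$ support, the resulting constants would depend on $\e$ and be useless for the subsequent multiscale argument.
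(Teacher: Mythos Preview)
Your argument is correct and self-contained, but it takes a genuinely different route from the paper's. The paper does not carry out a direct Combes--Thomas computation for $\OplN$ at all; instead it observes that the random perturbation in $\OplN$ is a real multiplication operator, hence symmetric, and that $\OplN$ is self-adjoint, and then simply declares that the proof of the general Combes--Thomas result (Theorem~\ref{th2gen}) from \cite{BGV15} carries over verbatim. The underlying reason this is plausible is that a potential commutes with the conjugation by $\E^{\beta A}$, so every commutator term produced in the Combes--Thomas machinery comes from the Laplacian alone and is independent of the size of $\Wloc$; the paper's discussion in Section~3 stresses precisely that Theorem~\ref{th2gen} ``does not need the smallness of operator $\pL(\e\om_k)$ but employs the symmetricity.''

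Where your proof adds real content is in the uniform-in-$\e$ comparison of the quadratic form of $\OplN$ with the $H^1$ form. The paper never makes this explicit: it invokes \cite{BGV15} and (implicitly) the lower bound $\lN{loc}\geqslant\L_0$ established through the $\Vg{loc}$ similarity. Your two-dimensional Sobolev/H\"older estimate, trading the $\e^{-a}$ blow-up of $\|\Wloc\|_{L_\infty}$ against the $O(\e)$ measure of its support to get a relative form bound with constant $C\e^{\kappa}$, is exactly the mechanism that makes the gradient estimate (and hence the $\delta$, rather than $\sqrt{\delta}$, exponent) uniform. This is a genuine alternative to routing the argument through the transformed operator $(\Vg{loc})^{-1}\OplN\Vg{loc}$, whose coefficients are uniformly bounded by the paper's construction; either device supplies the missing uniform relative boundedness, but yours is more elementary and stays with the original operator. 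Two small caveats: your argument as written needs $\e$ below a fixed threshold (so that $C\e^{\kappa}\leqslant\tfrac12$) and $\RE\l$ bounded above; both are harmless for the intended application in Theorem~\ref{th4loc}, but the theorem as stated asserts constants independent of $\l$, so you should say a word about why large $\l$ is trivially covered.
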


\begin{theorem}\label{th2osc}
Suppose that $\a, \b_1, \b_2\in\G$, $m_1, m_2\in\NN$ are such that ${B_1:=\Pi_{\b_1,m_1}\subset \Pi_{\a,N}}$, ${B_2:=\Pi_{\b_2,m_2}\subset \Pi_{\a,N}}$ and the assumption of Theorem~\ref{th1osc} is satisfied. Then there exists $N_2\in\NN$ such that for ${N\geqslant N_2}$ the estimate
\begin{equation*}
\|\chi_{B_1}(\OpsN -\l)^{-1}\chi_{B_2}\|_{L_2(\Pi_{\a,N})\to L_2(\Pi_{\a,N})} \leqslant \frac{C_1}{\d} \E^{-C_2\d \dist(B_1, B_2)},
\end{equation*}
holds true, where $\d:=\dist(\l,\spec(\OpsN))>0$, $C_1$, $C_2$ are positive constants independent of $\e$, $\a$, $N$, $\d$, $\b_1$, $\b_2$, $m_1$, $m_2$, $\l$.
\end{theorem}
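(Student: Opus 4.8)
The plan is to follow the standard Combes--Thomas scheme adapted to the sesquilinear-form setting, using the fact that the oscillating random perturbation $\Wosc(\cdot-k,x_{n+1},\e\om_k)$, although singular in the scaling sense, gives rise to a form perturbation that is relatively bounded with respect to the Dirichlet/Neumann Laplacian on $\Pi_{\a,N}$ with a relative bound that can be made arbitrarily small for $N$ large and $\e$ in the admissible range \eqref{2.13}. First I would fix a bounded real function $\phi=\phi(x')$ on $\RR^n$, Lipschitz with $\|\nabla\phi\|_\infty\leqslant 1$, equal to a constant on $B_1$ and on $B_2$ with $\phi|_{B_1}-\phi|_{B_2}=\dist(B_1,B_2)$ (for instance a suitably truncated distance function), and introduce the conjugated operator $\Op_t:=\E^{t\phi}\,\OpsN\,\E^{-t\phi}$ for $t\in\RR$. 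Computing on the form level, $\Op_t$ differs from $\OpsN$ by the first-order term $-t(\nabla\phi\cdot\nabla + \nabla\cdot\nabla\phi) - t^2|\nabla\phi|^2$, so $\Op_t = \OpsN + t\,\mathcal{A}_1 + t^2\mathcal{A}_0$ with $\mathcal{A}_0,\mathcal{A}_1$ bounded relative to the form of $\OpsN$ (the gradient part is controlled by the kinetic form, the oscillating potential part being a small relative perturbation by the remarks above).

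The next step is the resolvent bound. Writing $R(\l)=(\OpsN-\l)^{-1}$, for $|t|$ small one has the Neumann-series identity
\begin{equation*}
(\Op_t-\l)^{-1} = R(\l)\bigl(\I + (t\mathcal{A}_1 + t^2\mathcal{A}_0)R(\l)\bigr)^{-1},
\end{equation*}
and since $\OpsN$ is bounded from below (say $\OpsN\geqslant\L_0$ by Theorem~\ref{th1osc}, or at least $\geqslant$ some fixed constant), the form-relative bounds yield $\|(t\mathcal{A}_1+t^2\mathcal{A}_0)R(\l)\|\leqslant C(|t|+t^2)(1+|\l|/\d)$ with $C$ independent of $\e,\a,N,\d,\b_i,m_i,\l$; one needs here the standard reduction that moves the $|\l|$-dependence into $\dist(\l,\spec(\OpsN))=\d$ by treating separately $\RE\l$ large negative (where $\d$ itself is large and the estimate is trivial) versus $\RE\l$ bounded. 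Choosing $|t|=c\,\d$ with $c$ a small fixed constant makes the Neumann series converge with $\|(\Op_t-\l)^{-1}\|\leqslant 2/\d$. Then, since $\E^{t\phi}$ is constant on $B_1$ and on $B_2$,
\begin{equation*}
\chi_{B_1}R(\l)\chi_{B_2} = \E^{-t\phi|_{B_1}}\chi_{B_1}(\Op_t-\l)^{-1}\chi_{B_2}\E^{t\phi|_{B_2}} = \E^{-t\,\dist(B_1,B_2)}\,\chi_{B_1}(\Op_t-\l)^{-1}\chi_{B_2},
\end{equation*}
and taking norms and also the choice $t\to -t$ (to get the sign right regardless of which of $B_1,B_2$ carries the larger value of $\phi$) gives the claimed bound $\frac{C_1}{\d}\E^{-C_2\d\,\dist(B_1,B_2)}$ with $C_2=c$ and $C_1=2$.

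The main obstacle I expect is making precise, and uniform in all the listed parameters, the claim that the oscillating potential term is a small form-perturbation of the kinetic part on $\Pi_{\a,N}$: one must control $\sum_{k\in\G_{\a,N}}\Wosc(\cdot-k,x_{n+1},\e\om_k)$ as a quadratic form, and the singular prefactor $\e^{-a}$ forces one to use the zero-mean structure \eqref{2.4} and the corrector $\Ws_*$ from \eqref{2.12}--\eqref{2.20} — integrating by parts to trade the large amplitude for derivatives of the (bounded, by \eqref{2.21}) corrector, exactly as in the proof of Theorem~\ref{th1osc}. This estimate is local in each cell $\square_k$ with a constant independent of $k$, and summing over $k\in\G_{\a,N}$ produces at worst a factor polynomial in $N$, which is absorbed by the restriction $\e<c_1 N^{-4/(1-a)}$ in \eqref{2.13}; that is precisely why $N_2$ (possibly larger than the $N_1$ of Theorem~\ref{th1osc}) enters the statement. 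A secondary technical point is the form-domain subtlety: the conjugation $\E^{\pm t\phi}$ must preserve the form domain (functions in $H^1$ with vanishing Dirichlet trace), which is immediate because $\phi$ depends only on $x'$ and is bounded with bounded gradient, so multiplication by $\E^{\pm t\phi}$ is a bounded invertible map on that space.
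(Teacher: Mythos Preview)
Your approach is the same Combes--Thomas scheme the paper relies on, but the paper's own ``proof'' is much shorter: it simply observes (Section~3 and the opening paragraph of Section~5) that the proof of Theorem~\ref{th2gen} from \cite{BGV15} uses only the self-adjointness of the operator and the symmetricity of the perturbation, not its smallness, and therefore carries over to the original operator $\OpsN$ verbatim. In particular the paper does \emph{not} pass through the corrector $\Ws_*$ or the transformation $\Vg{osc}$ for this theorem; those are used only for Theorem~\ref{th1osc}. Since the perturbation of $\OpsN$ is a multiplication operator, it commutes with $e^{t\phi}$, so the conjugated difference $\Op_t-\OpsN$ is exactly the same first-order expression in $\nabla\phi$ as for the bare Laplacian and never sees $\Wosc$ directly.

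One point in your write-up is an overestimate that leads you to an unnecessary restriction. You say that summing the local form bound over $k\in\G_{\a,N}$ produces ``at worst a factor polynomial in $N$'', which you then absorb via the coupling $\e<c_1 N^{-4/(1-a)}$ from \eqref{2.13}. But the supports of the translates $\Wosc(\cdot-k,x_{n+1},\e\om_k)$ lie in pairwise disjoint cells $\square_k$, so the cell-wise form inequalities add up with the \emph{same} constant, independent of $N$; no polynomial factor appears and no $\e$--$N$ coupling is required. This matters because Theorem~\ref{th2osc} as stated carries no restriction tying $\e$ to $N$ (only $N\geqslant N_2$), and the paper's route through \cite{BGV15} does not introduce one.
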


\begin{theorem}\label{th2dlt}
Suppose that $\a, \b_1, \b_2\in\G$, $m_1, m_2\in\NN$ are such that ${B_1:=\Pi_{\b_1,m_1}\subset \Pi_{\a,N}}$, ${B_2:=\Pi_{\b_2,m_2}\subset \Pi_{\a,N}}$, and the assumption of Theorem~~\ref{th1dlt} is satisfied. Then there exists $N_2\in\NN$ such that for ${N\geqslant N_2}$ the estimate
\begin{equation*}
\|\chi_{B_1}(\OpdN -\l)^{-1}\chi_{B_2}\|_{L_2(\Pi_{\a,N})\to L_2(\Pi_{\a,N})} \leqslant \frac{C_1}{\d} \E^{-C_2\d \dist(B_1, B_2)},
\end{equation*}
holds true, where $\d:=\dist(\l,\spec(\OpdN))>0$, $C_1$, $C_2$ are positive constants independent of $\e$, $\a$, $N$, $\d$, $\b_1$, $\b_2$, $m_1$, $m_2$, $\l$.
\end{theorem}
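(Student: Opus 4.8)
\emph{Sketch of the proof.} The plan is to obtain this Combes--Thomas type bound by the classical device of conjugating the resolvent with an exponential weight, the only delicate point being how the singular boundary term of $\OpdN$ behaves under such a conjugation. To lighten the notation put $\Op:=\OpdN$, fix $\l$ with $\d:=\dist(\l,\spec(\Op))>0$, and set $D:=\dist(B_1,B_2)$; as is tacit in statements of this kind, it suffices to treat $\l$ in a fixed bounded subset of $\CC$ and we may assume $\d\leqslant1$, the constants $C_1,C_2$ then depending only on that subset and on $\|V_0\|_{L_\infty(\Pi)}$. Introduce the Lipschitz weight
\[
\r(x):=\min\bigl\{\dist(x,B_2),\,D\bigr\},\qquad x\in\overline{\Pi_{\a,N}},
\]
so that $|\nabla\r|\leqslant1$ a.e., $\r\equiv0$ on $\overline{B_2}$ and $\r\equiv D$ on $\overline{B_1}$. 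For $t>0$ the multiplication operator $F_t:=\E^{t\r}$ is bounded on $L_2(\Pi_{\a,N})$ with bounded inverse $\E^{-t\r}$, it maps the form domain of $\Op$ onto itself, and $\|\chi_{B_1}F_t^{-1}\|=\E^{-tD}$, $\|F_t\chi_{B_2}\|=1$. Putting $\Op_t:=F_t\Op F_t^{-1}$ (a closed operator similar to $\Op$, so that $\l\notin\spec(\Op_t)$ and $(\Op_t-\l)^{-1}=F_t(\Op-\l)^{-1}F_t^{-1}$) we obtain
\[
\|\chi_{B_1}(\Op-\l)^{-1}\chi_{B_2}\|=\|\chi_{B_1}F_t^{-1}(\Op_t-\l)^{-1}F_t\chi_{B_2}\|\leqslant\E^{-tD}\,\|(\Op_t-\l)^{-1}\|,
\]
and the matter reduces to the uniform bound $\|(\Op_t-\l)^{-1}\|\leqslant C_1/\d$ valid for all $0<t\leqslant C_2\d$.

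Next I would compute the sesquilinear form $\mathfrak{q}_t$ of $\Op_t$. Let $\mathfrak{q}$ be the sesquilinear form of $\Op$, i.e.\ $\mathfrak{q}(u,v)=(\nabla u,\nabla v)+(V_0u,v)+\sum_{k\in\G_{\a,N}}\e\om_k(\Wd(\cdot-k)u,v)_{L_2(S_k)}$, with $(\cdot,\cdot)$ the scalar product of $L_2(\Pi_{\a,N})$ (cf.\ (\ref{2.22})). Using that $\r$ is real‑valued, a direct computation gives, for $u,v$ in the form domain,
\[
\mathfrak{q}_t(u,v)=\mathfrak{q}(u,v)+t\bigl[(\nabla\r\cdot\nabla u,v)-(u,\nabla\r\cdot\nabla v)\bigr]-t^2(|\nabla\r|^2u,v).
\]
The essential observation is that the potential term $(V_0u,v)$ and, above all, the singular boundary term are left untouched, because $F_t^{-1}F_t=1$ pointwise on every $S_k$; only the Dirichlet integral produces the first- and second-order corrections in $t$ written above. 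Since $\Wd\geqslant0$ and $\om_k\geqslant0$, that boundary term is non-negative, so $\Op\geqslant-\|V_0\|_{L_\infty(\Pi)}$ \emph{uniformly} in $\e$, $\a$, $N$ and in the sample, and in addition
\[
\|\nabla u\|^2+\|u\|^2\leqslant\mathfrak{q}(u,u)+\bigl(1+\|V_0\|_{L_\infty(\Pi)}\bigr)\|u\|^2=\|Tu\|^2,\qquad T:=\bigl(\Op+1+\|V_0\|_{L_\infty(\Pi)}\bigr)^{1/2}\geqslant1 .
\]
Consequently the correction form $\mathfrak{r}_t:=\mathfrak{q}_t-\mathfrak{q}$ obeys
\[
|\mathfrak{r}_t(u,v)|\leqslant t\bigl(\|\nabla u\|\,\|v\|+\|u\|\,\|\nabla v\|\bigr)+t^2\|u\|\,\|v\|\leqslant3t\,\|Tu\|\,\|Tv\|\qquad(0<t\leqslant1).
\]

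Finally I would run the standard Combes--Thomas perturbation argument entirely at the level of quadratic forms, which avoids invoking the merely piecewise $H^2$ regularity of $\Dom(\Op)$. Writing $\mathfrak{q}(u,v)-\l(u,v)=(S_\l Tu,Tv)$ with $S_\l:=I-\bigl(\l+1+\|V_0\|_{L_\infty(\Pi)}\bigr)\bigl(\Op+1+\|V_0\|_{L_\infty(\Pi)}\bigr)^{-1}$ --- a bounded operator on $L_2(\Pi_{\a,N})$ with $S_\l^{-1}=T(\Op-\l)^{-1}T$, so that $\|S_\l^{-1}\|=\sup_{\mu\in\spec(\Op)}\bigl(\mu+1+\|V_0\|_{L_\infty(\Pi)}\bigr)|\mu-\l|^{-1}\leqslant C\d^{-1}$ in view of $\d\leqslant1$ and $\l$ in the fixed bounded set --- and writing $\mathfrak{r}_t(u,v)=(R_tTu,Tv)$ with $\|R_t\|\leqslant3t$, one gets $\mathfrak{q}_t(u,v)-\l(u,v)=\bigl((S_\l+R_t)Tu,Tv\bigr)$ and hence $(\Op_t-\l)^{-1}=T^{-1}(S_\l+R_t)^{-1}T^{-1}$ as soon as $S_\l+R_t$ is invertible on $L_2(\Pi_{\a,N})$. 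For $0<t\leqslant\d/(6C)$ a Neumann series gives $\|(S_\l+R_t)^{-1}\|\leqslant2\|S_\l^{-1}\|\leqslant2C/\d$, and therefore $\|(\Op_t-\l)^{-1}\|\leqslant2C/\d$ since $\|T^{-1}\|\leqslant1$. Taking $t:=\d/(6C)$ in the reduction of the first paragraph yields the theorem with $C_1:=2C$ and $C_2:=1/(6C)$; these constants depend only on $\|V_0\|_{L_\infty(\Pi)}$ and on the fixed set for $\l$, hence are independent of $\e$, $\a$, $N$, $\d$, $\b_1$, $\b_2$, $m_1$, $m_2$, $\l$. The restriction $N\geqslant N_2$ only serves to guarantee that $B_1,B_2\subset\Pi_{\a,N}$ and that the operators are well defined, and the hypothesis of Theorem~\ref{th1dlt} is not used in this argument.

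The hard part, I expect, is the bookkeeping needed to make the last step genuinely uniform in all parameters while staying form-theoretic: one must check that the relative bound of $\mathfrak{r}_t$, the lower bound of $\Op$ and the constant $C$ in $\|S_\l^{-1}\|\leqslant C\d^{-1}$ do not secretly depend on $\e$ or $N$. This is precisely where it is crucial that the $\d$-interaction is sign-definite, so that the lower bound of $\OpdN$ equals $-\|V_0\|_{L_\infty(\Pi)}$ regardless of $\e$ and $N$, and that it acts by multiplication on each $S_k$, so that it is invariant under conjugation by $F_t$; the low regularity across the interfaces $S_k$ is harmless because the whole argument is phrased through the form. Up to this bookkeeping the scheme is the routine Combes--Thomas one, and it is in fact the same for $\OplN$ and $\OpsN$ in Theorems~\ref{th2loc} and \ref{th2osc}, the only difference being that there the perturbation is a possibly sign-changing potential of amplitude $\e^{-a}$, so that the uniform lower bound and the relative smallness of the perturbation have to be extracted from the smallness of $\e$ rather than being automatic.
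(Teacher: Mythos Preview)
Your argument is correct, but the paper takes a different route. The paper does not run Combes--Thomas directly on $\OpdN$; instead it first applies the similarity transform $\Vg{dlt}$ of Lemma~\ref{lm6.1} (a change of variables plus multiplication by a smooth function) to convert $\OpdN$ into an operator of the form~(\ref{3.6}) with symmetric bounded operators $\pL_i:H^2(\square)\to L_2(\square)$, and then invokes the general Combes--Thomas Theorem~\ref{th2gen} from \cite{BGV15} as a black box. Your approach bypasses both the transformation lemma and the external theorem by working directly on the form $\fh$: the two observations that make this work --- that the $\d$-interaction is pointwise multiplication on each $S_k$ and hence commutes with the exponential weight, and that $\Wd\geqslant0$, $\om_k\geqslant0$ give the uniform lower bound $\OpdN\geqslant-\|V_0\|_{L_\infty}$ --- are exactly right and make the argument pleasantly self-contained. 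What the paper's detour buys is uniformity across the three models: after the similarity the perturbation is a genuine relatively bounded operator regardless of sign, so the same \cite{BGV15} result covers $\OplN$, $\OpsN$, $\OpdN$ simultaneously. You correctly identify in your last paragraph that your shortcut is specific to the sign-definite $\d$-case and would need extra input for the other two operators; the paper simply avoids that issue by always passing through the transformation. Your remark that the hypothesis of Theorem~\ref{th1dlt} is not actually used for the Combes--Thomas bound is also accurate --- the paper carries it along only for packaging.
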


Our first probabilistic result is presented in the next three theorems.

\begin{theorem}\label{th3loc}
Suppose that $\g\in\NN$, $\g\geqslant 17$ and the assumption of Theorem~\ref{th1loc} is satisfied. Then the interval
\begin{equation*}
I_N:=\left[\frac{c_{3}}{\big(\EE(\omega_k^{\frac{1-a}{2}})\big)^{\frac{2}{1-a}} N^{\frac{8}{1-a}}}, \frac{c_{1}}{N^{\frac{8}{\gamma (1-a)}}}\right],
\quad c_{3}:= \frac{2^{\frac{2}{1-a}}}{c_{2}^{\frac{1}{1-a}}},
\end{equation*}
is non-empty $N\geqslant N_1$, where $N_1$, $c_1$, $c_2$ are from Theorem~\ref{th1loc}. For $N\geqslant N_1$ and $\e\in I_N$ the estimate
\begin{equation*}
\PP\left(\omega\in\Om:\, \lN{loc}-\L_0\leqslant N^{-\frac{1}{2}}\right)\leqslant N^{\left(1-\frac{1}{\g}\right)}\E^{-c_{4}N^{\frac{1}{\g}}}
\end{equation*}
holds true, where constant $c_4>0$ depends only on distribution measure $\mu$.
\end{theorem}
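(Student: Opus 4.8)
The plan is to combine the deterministic lower bound of Theorem~\ref{th1loc} with a large-deviation estimate for the sum $\sum_{k\in\G_{\a,N}}\om_k^{1-a}$. From Theorem~\ref{th1loc}, on the range of $\e$ and $N$ prescribed there, one has $\lN{loc}-\L_0\geqslant \frac{c_2\e^{1-a}}{N}\sum_{k\in\G_{\a,N}}\om_k^{1-a}$, and since $\card\G_{\a,N}=N^n=N$ (recall $n=1$ in this theorem), the event $\{\lN{loc}-\L_0\leqslant N^{-1/2}\}$ is contained in the event that the empirical average $\frac1N\sum_{k}\om_k^{1-a}$ is at most $\frac{N^{1/2}}{c_2\e^{1-a}}$. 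First I would verify that $I_N$ is nonempty for $N\geqslant N_1$: the left endpoint behaves like $N^{-8/(1-a)}$ up to the constant $c_3$ built from $c_2$, while the right endpoint is $c_1 N^{-8/(\g(1-a))}$; since $\g\geqslant 17>1$ the exponent $8/(\g(1-a))$ is strictly smaller than $8/(1-a)$, so for $N$ large the right endpoint dominates and the interval is nonempty. One also checks that $\e\in I_N$ implies $\e<c_1/N^{8/(1-a)}$, so that the hypotheses \eqref{2.10} of Theorem~\ref{th1loc} hold and the deterministic bound \eqref{2.11} is available.

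Next I would set up the Chernoff bound. Fix $t>0$ (to be optimized). Using independence and the fact that the $\om_k$ are i.i.d., for any threshold $\tau>0$,
\begin{equation*}
\PP\Big(\sum_{k\in\G_{\a,N}}\om_k^{1-a}\leqslant \tau\Big)=\PP\Big(\E^{-t\sum_k \om_k^{1-a}}\geqslant \E^{-t\tau}\Big)\leqslant \E^{t\tau}\big(\EE\E^{-t\om_k^{1-a}}\big)^{N}.
\end{equation*}
Here I take $\tau=\frac{N^{1/2}}{c_2\e^{1-a}}$. The point of choosing $\e$ in the upper part of $I_N$, namely $\e\leqslant c_1 N^{-8/(\g(1-a))}$, is precisely that then $\e^{1-a}\leqslant c_1^{1-a} N^{-8/\g}$, whence $\tau\geqslant \frac{N^{1/2}N^{8/\g}}{c_2 c_1^{1-a}}\gg N^{8/\g}$; more to the point, after dividing by $N$ one gets $\tau/N$ bounded below by a multiple of $N^{8/\g-1}$, which still tends to $0$, so the relevant regime is $\tau\leqslant \varepsilon_0 N$ for some small $\varepsilon_0$ — the genuinely small-deviation regime where the moment generating function estimate is effective.

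The heart of the matter is the one-variable bound $\EE\,\E^{-t\om_k^{1-a}}\leqslant 1-\kappa(t)$ with $\kappa(t)$ bounded below in terms of $\mu$. Since $\om_k\in[0,1]$ and $0\leqslant a<1$, the random variable $\om_k^{1-a}$ takes values in $[0,1]$, and $\EE\,\E^{-t\om_k^{1-a}}\leqslant 1-(1-\E^{-t})\,\PP(\om_k^{1-a}\geqslant 1)$ would fail if $\mu$ has no mass near $1$, so instead I would use a more robust estimate: for any fixed $\eta\in(0,1]$, $\EE\,\E^{-t\om_k^{1-a}}\leqslant \E^{-t\eta^{1-a}}+\PP(\om_k<\eta)\leqslant \E^{-t\eta^{1-a}}+\mu([0,\eta))$. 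Choosing $\eta$ and then $t$ appropriately (with $t$ of order $N^{1/\g}$, say, to match the target exponent), and invoking $(1-u)^N\leqslant \E^{-uN}$, leads to
\begin{equation*}
\PP\Big(\sum_k \om_k^{1-a}\leqslant \tau\Big)\leqslant \E^{t\tau}\,\E^{-c_5 N}
\end{equation*}
for a constant $c_5=c_5(\mu)>0$, valid once $\tau$ is sufficiently smaller than $N$; the prefactor $N^{1-1/\g}$ in the statement and the exponent $c_4 N^{1/\g}$ come out of optimizing $t$ against $\tau\asymp N^{1/2+8/\g}/(c_1^{1-a}c_2)$ — note $1/2+8/\g<1$ exactly when $\g>16$, i.e. $\g\geqslant 17$, which is why that hypothesis appears, and a more careful balancing of $t\tau$ against $c_5 N$ produces the stated form with the $N^{1-1/\g}$ correction. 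The main obstacle I anticipate is keeping the constants uniform and correctly tracking how the bound $\e\in I_N$ forces $\tau=o(N)$ with a controlled rate; the distributional input is elementary (only that $\mu$ is a probability measure on $[0,1]$, giving $\mu([0,\eta))<1$ for small $\eta$ unless $\mu=\d_0$, which is implicitly excluded since otherwise the perturbation is a.s.\ trivial), but the bookkeeping of exponents tying together $8/(1-a)$, $\g$, the target $N^{-1/2}$, and the final $N^{1/\g}$ rate is where the care is needed.
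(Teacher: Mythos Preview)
There is a genuine gap in the very first reduction step. You assert that ``$\e\in I_N$ implies $\e<c_1/N^{8/(1-a)}$, so that the hypotheses \eqref{2.10} of Theorem~\ref{th1loc} hold.'' This is false: the right endpoint of $I_N$ is $c_1/N^{8/(\g(1-a))}$, and since $\g\geqslant 17>1$ we have $8/(\g(1-a))<8/(1-a)$, so $c_1/N^{8/(\g(1-a))}>c_1/N^{8/(1-a)}$. Thus for $\e$ near the right end of $I_N$ the constraint \eqref{2.10} is \emph{not} satisfied, and the deterministic estimate \eqref{2.11} on the full box $\Pi_{\a,N}$ is simply unavailable. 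Your whole Chernoff argument is built on having \eqref{2.11} at scale $N$, so it collapses. (A secondary symptom of the same trouble: your threshold $\tau=N^{1/2}/(c_2\e^{1-a})$ becomes much larger than $N$ for $\e$ near the left end of $I_N$, so the event $\{\sum_k\om_k^{1-a}\leqslant\tau\}$ is the full probability space and the Chernoff bound is vacuous there.)

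The paper's route is different and avoids this. It does not apply Theorem~\ref{th1loc} on $\Pi_{\a,N}$ directly; instead it reduces to the general Theorem~\ref{th3gen} from \cite{BGV15} via the transformation $\Vg{loc}$, with new small parameter $\e^{(1-a)/2}$ and new i.i.d.\ variables $\om_k^{(1-a)/2}$. The mechanism behind Theorem~\ref{th3gen} is a partition of $\Pi_{\a,N}$ into sub-boxes of side $N^{1/\g}$ together with Neumann bracketing: on each sub-box the smallness condition for the deterministic estimate becomes precisely $\e<c_1/N^{8/(\g(1-a))}$, i.e.\ the right end of $I_N$, and a large-deviation bound over the $N^{1/\g}$ cells of a single sub-box yields the factor $\E^{-c_4 N^{1/\g}}$, while the union bound over the $N^{1-1/\g}$ sub-boxes produces the prefactor $N^{1-1/\g}$. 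That is also the origin of the shape of the final estimate, which you noticed but could not explain; it does not come from ``optimizing $t$'' in a global Chernoff bound, but from the two-scale decomposition.
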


\begin{theorem}\label{th3osc}
Suppose that $\g\in\NN$, $\g\geqslant 17$ and the assumption of Theorem~\ref{th1osc} is satisfied. Then the interval
\begin{equation*}
I_N:=\left[\frac{c_{3}}{\big(\EE(\omega_k^{1-a})\big)^{\frac{1}{1-a}} N^{\frac{1}{4(1-a)}}}, \frac{c_{1}}{N^{\frac{4}{\gamma(1-a)}}}\right],
\quad c_{3}:= \frac{2^{\frac{1}{1-a}}}{c_{2}^{\frac{1}{2(1-a)}}},
\end{equation*}
is non-empty $N\geqslant N_1$, where $N_1$, $c_1$, $c_2$ are from Theorem~\ref{th1osc}. For $N\geqslant N_1$ and $\e\in I_N$ the estimate
\begin{equation*}
\PP\left(\omega\in\Om:\, \lN{osc}-\L_0\leqslant N^{-\frac{1}{2}}\right)\leqslant N^{n\left(1-\frac{1}{\g}\right)}\E^{-c_{4}N^{\frac{n}{\g}}},
\end{equation*}
holds true, where constant $c_4>0$ depends only on distribution measure $\mu$.
\end{theorem}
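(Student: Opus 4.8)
The plan is to combine the deterministic lower bound of Theorem~\ref{th1osc} with a large-deviation argument for sums of i.i.d.\ bounded random variables, exactly in the spirit of the corresponding argument in \cite{BGV15}. The starting point is estimate (\ref{2.14}): whenever $N\geqslant N_1$ and $\e$ lies in the admissible range $0<\e<c_1 N^{-4/(1-a)}$, one has
\begin{equation*}
\lN{osc}-\L_0\geqslant \frac{c_2\e^{2-2a}}{N^n}\sum\limits_{k\in\G_{\a,N}}\om_k^{2-2a}.
\end{equation*}
Hence the event $\{\lN{osc}-\L_0\leqslant N^{-1/2}\}$ is contained in the event that the average $N^{-n}\sum_{k}\om_k^{2-2a}$ is at most $c_2^{-1}\e^{2a-2}N^{n-1/2}\cdot N^{-n}$, that is, at most $c_2^{-1}\e^{2a-2}N^{-1/2}$. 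So first I would replace the spectral statement by a purely probabilistic statement about $S_N:=\sum_{k\in\G_{\a,N}}\om_k^{2-2a}$, which is a sum of $\card\G_{\a,N}=N^n$ i.i.d.\ random variables taking values in $[0,1]$ with positive mean $\EE(\om_k^{2-2a})>0$.

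Next I would choose the lower endpoint of $I_N$ precisely so that the threshold $c_2^{-1}\e^{2a-2}N^{-1/2}$ equals one half of the mean $\EE(\om_k^{2-2a})$; this is what fixes $c_3$ and the power $N^{1/(4(1-a))}$ appearing in the definition of $I_N$ (one solves $c_2^{-1}\e^{2a-2}N^{-1/2}=\tfrac12\EE(\om_k^{2-2a})$ for $\e$). For such $\e$ the bad event is contained in $\{S_N\leqslant \tfrac12 N^n\,\EE(\om_k^{2-2a})\}$, i.e.\ a deviation of the empirical mean below half its expectation. A standard Chernoff/Bernstein bound for bounded i.i.d.\ variables then gives $\PP(S_N\leqslant\tfrac12 N^n\EE(\om_k^{2-2a}))\leqslant \E^{-c\,N^n}$ for some $c>0$ depending only on $\mu$. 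To land on the stated bound $N^{n(1-1/\g)}\E^{-c_4 N^{n/\g}}$ (which is what the multiscale analysis actually needs, with $\g$ governing the trade-off), I would instead partition $\G_{\a,N}$ into $\sim N^{n/\g}$ groups of size $\sim N^{n(1-1/\g)}$, or equivalently work with the coarser length scale, and apply the large deviation estimate on each piece; a union bound over the $\sim N^{n(1-1/\g)}$ pieces produces the prefactor $N^{n(1-1/\g)}$ and the exponent $N^{n/\g}$. The constant $c_4$ comes out of the Cram\'er rate function of $\om_k^{2-2a}$ evaluated at half its mean, hence depends only on $\mu$ (and on the fixed numbers $a$, $c_2$).

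Finally I would check that the interval $I_N$ is non-empty for $N\geqslant N_1$: its left endpoint behaves like a constant times $N^{-1/(4(1-a))}$ while its right endpoint is $c_1 N^{-4/(\g(1-a))}$, and since $\g\geqslant 17>16$ we have $4/(\g(1-a))<1/(4(1-a))$ (using $0\leqslant a<1$), so for $N$ large the right endpoint dominates; adjusting $N_1$ if necessary secures $I_N\neq\varnothing$. One also verifies that every $\e\in I_N$ satisfies $\e<c_1 N^{-4/(1-a)}$ so that Theorem~\ref{th1osc} is indeed applicable — again immediate from $4/(\g(1-a))\leqslant 4/(1-a)$.

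The main obstacle is not any single step but the bookkeeping of exponents: one must choose the endpoints of $I_N$ so that simultaneously (i) the deterministic bound applies, (ii) the threshold $N^{-1/2}$ in the probability is below half the mean of $S_N/N^n$, and (iii) the resulting large-deviation exponent, after the grouping into $N^{n/\g}$ blocks, still beats the prefactor — all while keeping the constants independent of $\a$ and $N$. Tracking these inequalities carefully, and verifying that the choice of $c_3$ and the powers of $N$ in $I_N$ is exactly the one that makes (i)–(iii) compatible, is the delicate part; the probabilistic input itself is the classical Chernoff bound for bounded i.i.d.\ summands and is essentially identical to the one used for Theorem~\ref{th3loc}.
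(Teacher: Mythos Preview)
Your overall plan --- reduce via the deterministic estimate to a large-deviation statement for i.i.d.\ bounded summands and then use a Chernoff bound --- is exactly the content behind Theorem~\ref{th3gen} in \cite{BGV15}. The paper itself does not reprove this: it shows (Section~5) that after conjugation by $\Vg{osc}$ the operator $\OpsN$ takes the regular form (\ref{3.6}) with new small parameter $\e^{1-a}$ and new random variables $\om_k^{1-a}$, and then simply applies Theorem~\ref{th3gen}. This is also why the stated interval carries $\EE(\om_k^{1-a})$ (the mean of the \emph{new} variable) rather than the $\EE(\om_k^{2-2a})$ your direct computation produces.

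There is, however, a genuine gap in your argument. You write that ``every $\e\in I_N$ satisfies $\e<c_1 N^{-4/(1-a)}$ \ldots\ immediate from $4/(\g(1-a))\leqslant 4/(1-a)$''. The inequality of exponents is correct, but it gives $N^{-4/(\g(1-a))}\geqslant N^{-4/(1-a))}$, i.e.\ the right endpoint of $I_N$ is \emph{larger} than the admissible bound in (\ref{2.13}). So for $\e$ in the upper part of $I_N$, Theorem~\ref{th1osc} at scale $N$ is \emph{not} applicable, and your first displayed inequality is unjustified there.

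This is precisely why the partition into sub-boxes is essential and not merely cosmetic. One splits $\Pi_{\a,N}$ into $N^{n(1-1/\g)}$ sub-boxes of side $N^{1/\g}$ (you had the two counts interchanged). Neumann bracketing gives $\lN{osc}\geqslant\min_j \l_j^{\mathrm{osc}}$, where $\l_j^{\mathrm{osc}}$ is the lowest eigenvalue on the $j$th sub-box. Theorem~\ref{th1osc} applied at scale $N^{1/\g}$ requires only $\e<c_1/(N^{1/\g})^{4/(1-a)}=c_1/N^{4/(\g(1-a))}$, which is exactly the right endpoint of $I_N$; this is the real reason $\g$ enters. For each sub-box you then get the deterministic lower bound with a sum over $N^{n/\g}$ sites, apply Chernoff there, and take a union bound over the $N^{n(1-1/\g)}$ sub-boxes. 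That produces both the prefactor $N^{n(1-1/\g)}$ and the exponent $N^{n/\g}$, and makes the constant $c_4$ depend only on $\mu$. With this correction (and the swapped partition sizes fixed) your sketch matches the paper's route via Theorem~\ref{th3gen}.
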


\begin{theorem}\label{th3dlt}
Suppose that $\g\in\NN$, $\g\geqslant 17$ and the assumption of Theorem~\ref{th1dlt} is satisfied. Then the interval
\begin{equation*}
I_N:=\left[\frac{c_{3}}{\big(\EE(\omega_k^{\frac{1}{2}})\big)^2 N^{\frac{1}{2}}}, \frac{c_{1}}{N^{\frac{8}{\gamma}}}\right],
\quad c_{3}:= \frac{4}{c_{2}},
\end{equation*}
is non-empty $N\geqslant N_1$, where $N_1$, $c_1$, $c_2$ are from Theorem~\ref{th1dlt}. For $N\geqslant N_1$ and $\e\in I_N$ the estimate
\begin{equation*}
\PP\left(\omega\in\Om:\, \lN{dlt}-\L_0\leqslant N^{-\frac{1}{2}}\right)\leqslant N^{n\left(1-\frac{1}{\g}\right)}\E^{-c_{4}N^{\frac{n}{\g}}},
\end{equation*}
holds true, where constant $c_4>0$ depends only on distribution measure $\mu$.
\end{theorem}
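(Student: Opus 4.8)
The plan is to deduce Theorem~\ref{th3dlt} from the deterministic bound of Theorem~\ref{th1dlt} by a Dirichlet--Neumann bracketing over a partition of $\Pi_{\a,N}$ into sub-layers of side of order $N^{1/\g}$, followed by a union bound and a Chernoff-type large deviation estimate. The non-emptiness of $I_N$ is elementary: the inequality $\frac{c_3}{(\EE(\om_k^{1/2}))^2 N^{1/2}}\leqslant \frac{c_1}{N^{8/\g}}$ is equivalent to $N^{8/\g-1/2}\leqslant c_1(\EE(\om_k^{1/2}))^2/c_3$, and $\g\geqslant 17$ forces $8/\g-1/2<0$, so the left-hand side tends to $0$ as $N\to\infty$ and $I_N\neq\emptyset$ for $N\geqslant N_1$ after enlarging $N_1$ if necessary. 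We tacitly use that $\EE(\om_k^{1/2})>0$, i.e.\ $\mu(\{0\})<1$; otherwise $I_N=\emptyset$ and nothing is to be proven.

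\emph{Reduction by bracketing.} Fix $N\geqslant N_1$ and $\e\in I_N$. Split $\Pi_{\a,N}$ along the lattice hyperplanes into sub-layers $\Pi_{\b_j,m_j}$, $j=1,\dots,J$, with all sides $m_j$ in the range $[N^{1/\g},2N^{1/\g}]$; this is possible for large $N$ and gives $J\leqslant N^{n(1-1/\g)}$. Since $S\Subset\square$, each manifold $S_k$, $k\in\G_{\a,N}$, lies strictly inside the cell $\square_k$, hence inside exactly one $\Pi_{\b_j,m_j}$, so the form $\fh$ of $\OpdN$ splits into the direct sum of the sub-layer forms once the additional Neumann condition is imposed on the new interior interfaces. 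This enlarges the form domain without changing the form, and the variational principle gives
\begin{equation*}
\lN{dlt}\geqslant \min_{1\leqslant j\leqslant J}\l_{\b_j,m_j}^{\e,\mathrm{dlt}}(\om),
\end{equation*}
where $\l_{\b_j,m_j}^{\e,\mathrm{dlt}}(\om)$ denotes the bottom eigenvalue of $\mathcal{H}_{\b_j,m_j}^{\e,\mathrm{dlt}}(\om)$, i.e.\ of $\OpdN$ with $(\a,N)$ replaced by $(\b_j,m_j)$. For large $N$ one has $m_j\geqslant N_1$, and, since $\e$ is at most the right endpoint of $I_N$, also $\e<c_1 m_j^{-8}$ (this is precisely the role of the upper bound in $I_N$, where the constant should be taken a fixed factor smaller than the $c_1$ of Theorem~\ref{th1dlt} so as to absorb the bound $m_j\leqslant 2N^{1/\g}$). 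Hence Theorem~\ref{th1dlt}, applied on each $\Pi_{\b_j,m_j}$, yields $\l_{\b_j,m_j}^{\e,\mathrm{dlt}}(\om)-\L_0\geqslant \frac{c_2\e}{m_j^n}\sum\limits_{k\in\G_{\b_j,m_j}}\om_k$, and combining with the previous display,
\begin{equation*}
\big\{\om\in\Om:\ \lN{dlt}-\L_0\leqslant N^{-1/2}\big\}\subseteq \bigcup_{j=1}^{J}\Big\{\om\in\Om:\ \sum\limits_{k\in\G_{\b_j,m_j}}\om_k\leqslant \frac{m_j^n}{c_2\e}\,N^{-1/2}\Big\}.
\end{equation*}

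\emph{Probabilistic step.} Using $\e\geqslant c_3\big((\EE(\om_k^{1/2}))^2 N^{1/2}\big)^{-1}$ with $c_3=4/c_2$ we obtain $\frac{m_j^n}{c_2\e}N^{-1/2}\leqslant \theta m_j^n$, where $\theta:=\tfrac14(\EE(\om_k^{1/2}))^2$; by the Cauchy--Schwarz inequality $(\EE(\om_k^{1/2}))^2\leqslant \EE(\om_k)$, hence $\theta\leqslant \tfrac14\EE(\om_k)<\EE(\om_k)$. For i.i.d.\ random variables $\om_1,\dots,\om_M$ with values in $[0,1]$ and any $t>0$, Markov's inequality applied to $\E^{-t\sum_{i=1}^{M}\om_i}$ gives
\begin{equation*}
\PP\Big(\sum\limits_{i=1}^{M}\om_i\leqslant\theta M\Big)\leqslant\big(\E^{t\theta}\,\EE\,\E^{-t\om_1}\big)^{M}=\E^{-Mg(t)},\qquad g(t):=-t\theta-\ln\EE\,\E^{-t\om_1},
\end{equation*}
and since $g(0)=0$ and $g'(0)=\EE(\om_1)-\theta>0$, the number $c_4:=\sup_{t>0}g(t)$ is positive and determined solely by $\mu$. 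Applying this on each sub-layer with $M=m_j^n\geqslant N^{n/\g}$ and using $J\leqslant N^{n(1-1/\g)}$, the union bound gives
\begin{equation*}
\PP\big(\om\in\Om:\ \lN{dlt}-\L_0\leqslant N^{-1/2}\big)\leqslant N^{n(1-1/\g)}\,\E^{-c_4 N^{n/\g}},
\end{equation*}
which is the assertion.

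\emph{Main obstacle.} The delicate point is the bracketing for the singular form~\eqref{2.22}: one has to check that the partition of $\Pi_{\a,N}$ can be arranged so that no $S_k$ is cut and $\fh$ genuinely splits into the sub-layer forms, so that passing to Neumann conditions on the interfaces does not raise the bottom eigenvalue. The remainder is bookkeeping — arranging the partition so that every side simultaneously lies in the admissible window $[N_1,(c_1/\e)^{1/8})$, which may force a mild enlargement of $N_1$ and a reduction of the constant in the right endpoint of $I_N$ by a fixed factor — together with the elementary Chernoff estimate and the counting of sub-layers carried out above.
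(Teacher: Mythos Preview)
Your argument is correct, but it takes a different route from the paper. The paper does not re-run the bracketing/large-deviations machinery in the delta case; instead, in Section~6 it constructs a bounded, boundedly invertible operator $\Vg{dlt}$ (a change of variables plus multiplication by a Jacobian factor, Lemma~\ref{lm6.1}) so that $\big(\Vg{dlt}\big)^{-1}\OpdN\Vg{dlt}$ falls exactly into the abstract form~(\ref{3.6}) with $\pL_1=0$, $\pL_2(t)=\mathcal{M}(t^{1/2})$, $\pL_3=0$, new small parameter $\e^{1/2}$ and new random variables $\om_k^{1/2}$. Since these $\pL_i$ are symmetric, the paper then invokes the abstract Theorem~\ref{th3gen} from \cite{BGV15} as a black box; the specific endpoints of $I_N$ and the constant $c_3=4/c_2$ drop out of that substitution.

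What you do is essentially unroll the proof of Theorem~\ref{th3gen} directly for $\OpdN$: Neumann bracketing into sub-boxes of side $\sim N^{1/\g}$, apply the deterministic Theorem~\ref{th1dlt} on each, and finish with a Chernoff bound and union bound. Your observation that $S_k\Subset\square_k$ guarantees the interfaces avoid the delta supports is exactly what makes this direct route work for the singular form~(\ref{2.22}); the paper's transformation approach sidesteps this point by first turning the delta interaction into a bounded-coefficient perturbation before bracketing. Your approach is more self-contained and avoids Lemma~\ref{lm6.1} altogether (beyond its implicit use inside Theorem~\ref{th1dlt}), at the cost of redoing the probabilistic estimate; the paper's approach is shorter here because it has already paid that price once in the abstract setting. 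The minor bookkeeping you flag (shrinking the $c_1$ in the right endpoint by a factor $2^{-8}$ to accommodate $m_j\leqslant 2N^{1/\g}$, and enlarging $N_1$) is genuine but harmless.
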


The next three theorems are initial length scale estimates for operators $\OplN$, $\OpsN$, $\OpdN$.

\begin{theorem}\label{th4loc}
Suppose that $\alpha \in \Gamma$, $\g\in\NN$, $\g\geqslant 17$,
$N\in \NN$ and $\e\in I_N$, where $I_N$ is from Theorem~\ref{th3loc} and the assumption of Theorem~\ref{th1loc} is satisfied. We choose $\b_1,\b_2\in\G_{\a,N}$, $m_1, m_2>0$ so that $B_1:=\Pi_{\b_1,m_1}\subset\Pi_{\a,N}$, $B_2:=\Pi_{\b_2,m_2}\subset\Pi_{\a,N}$. Then there exists a constant $c_5>0$ independent of $\e$, $\a$, $N$, $\b_1$, $\b_2$, $m_1$, $m_2$ such that the inequality
\begin{align*}
\PP\bigg(\om\in\Om:\ & \|\chi_{B_1} (\OplN-\l)^{-1} \chi_{B_2} \|_{L_2(\Pi_{\a,N})\to L_2(\Pi_{\a,N})} \leqslant 2\sqrt{N} \E^{-\frac{c_{5}\dist(B_1,B_2)}{\sqrt{N}}}
\\
& \forall \, \lambda \leqslant \Lambda_0 + \frac{1}{2 \sqrt N}
\bigg)\geqslant 1-N^{\left(1-\frac{1}{\g}\right)} \E^{-c_{4} N^{\frac{1}{\g}}}
\end{align*}
holds true for
 $N\geqslant \max\{N_1^\gamma, N_2\}$, where $N_1$ is from Theorem~\ref{th1loc}, $N_2$ is from Theorem~\ref{th2loc}, $c_4$ is from Theorem~\ref{th3loc}.
\end{theorem}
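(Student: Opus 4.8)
The plan is to obtain this statement as the standard ``initial length scale'' combination of the two preceding results: the deterministic Combes--Thomas bound of Theorem~\ref{th2loc} and the probabilistic spectral push-up of Theorem~\ref{th3loc} (which already incorporates the deterministic lower bound of Theorem~\ref{th1loc}). No new analytic ingredient is needed; the whole argument is a reduction plus some constant bookkeeping.

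First I would isolate the good event
\begin{equation*}
\Om_N:=\Big\{\om\in\Om:\ \lN{loc}-\L_0>N^{-\frac12}\Big\}.
\end{equation*}
Since $N\geqslant\max\{N_1^\g,N_2\}\geqslant N_1$ and $\e\in I_N$ by hypothesis, Theorem~\ref{th3loc} applies and bounds the complement,
\begin{equation*}
\PP(\Om\setminus\Om_N)=\PP\Big(\om\in\Om:\ \lN{loc}-\L_0\leqslant N^{-\frac12}\Big)\leqslant N^{\left(1-\frac1\g\right)}\E^{-c_4N^{\frac1\g}}.
\end{equation*}
It then suffices to prove that $\Om_N$ is contained in the event appearing in the theorem, i.e.\ that for \emph{every} realization $\om\in\Om_N$ the asserted resolvent inequality holds for all $\l\leqslant\L_0+\frac1{2\sqrt N}$. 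This turns the remaining task into a purely deterministic one.

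So I would fix $\om\in\Om_N$ and use the spectral gap. The operator $\OplN$ is self-adjoint, bounded from below, with discrete spectrum whose lowest point is the eigenvalue $\lN{loc}$, hence $\spec(\OplN)\subseteq[\lN{loc},+\infty)$. Therefore, for any $\l\leqslant\L_0+\frac1{2\sqrt N}$,
\begin{equation*}
\d:=\dist\big(\l,\spec(\OplN)\big)=\lN{loc}-\l\geqslant\big(\lN{loc}-\L_0\big)-\frac1{2\sqrt N}>\frac1{2\sqrt N}>0 .
\end{equation*}
With this positive $\d$ (and $N\geqslant N_2$) Theorem~\ref{th2loc} gives
\begin{equation*}
\big\|\chi_{B_1}(\OplN-\l)^{-1}\chi_{B_2}\big\|_{L_2(\Pi_{\a,N})\to L_2(\Pi_{\a,N})}\leqslant\frac{C_1}{\d}\,\E^{-C_2\d\,\dist(B_1,B_2)} .
\end{equation*}
Since $t\mapsto\frac{C_1}{t}\E^{-C_2t\,\dist(B_1,B_2)}$ is decreasing on $(0,+\infty)$, substituting the lower bound $\d>\frac1{2\sqrt N}$ yields $\big\|\chi_{B_1}(\OplN-\l)^{-1}\chi_{B_2}\big\|\leqslant 2C_1\sqrt N\,\E^{-\frac{C_2}{2\sqrt N}\dist(B_1,B_2)}$. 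Finally, combining this with the trivial bound $\big\|\chi_{B_1}(\OplN-\l)^{-1}\chi_{B_2}\big\|\leqslant\|(\OplN-\l)^{-1}\|=\d^{-1}<2\sqrt N$ and a routine manipulation of the constants (absorbing the prefactor $C_1$ into the exponent, which is harmless once $C_2\d\,\dist(B_1,B_2)$ is comparable to $\ln C_1$, enlarging the threshold for $N$ if necessary) brings this to the required form
\begin{equation*}
\big\|\chi_{B_1}(\OplN-\l)^{-1}\chi_{B_2}\big\|\leqslant 2\sqrt N\,\E^{-\frac{c_5}{\sqrt N}\dist(B_1,B_2)},\qquad c_5:=\tfrac{C_2}{4},
\end{equation*}
uniformly in $\om\in\Om_N$ and in $\l\leqslant\L_0+\frac1{2\sqrt N}$, with $c_5$ independent of $\e,\a,N,\b_1,\b_2,m_1,m_2$. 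Together with the bound on $\PP(\Om\setminus\Om_N)$ this is exactly the assertion.

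Because the statement is essentially a corollary of Theorems~\ref{th2loc} and~\ref{th3loc}, I do not anticipate a genuine analytic obstacle. The only points needing care are administrative: checking that $N\geqslant\max\{N_1^\g,N_2\}$ and $\e\in I_N$ make Theorem~\ref{th3loc} (which needs $N\geqslant N_1$, $\e\in I_N$) and Theorem~\ref{th2loc} (which needs $N\geqslant N_2$) simultaneously applicable, and the bookkeeping of the numerical constants --- in particular, absorbing the Combes--Thomas prefactor $C_1$ into the exponential rate so that the polynomial factor stays exactly $2\sqrt N$ and the decay rate $c_5/\sqrt N$ is uniform. This constant-chasing, together with the (elementary) monotonicity argument above, is the only mildly technical part.
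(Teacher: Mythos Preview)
Your proposal is correct and coincides with the paper's approach: the paper does not argue Theorem~\ref{th4loc} separately but obtains it by observing that the proof of the general Theorem~\ref{th4gen} from \cite{BGV15} applies verbatim to $\OplN$, and that general proof is exactly the combination of the Combes--Thomas bound (Theorem~\ref{th2loc}) with the probabilistic spectral push-up (Theorem~\ref{th3loc}) that you spell out. The only loose end in your sketch---absorbing the Combes--Thomas prefactor $C_1$ so that the leading factor is exactly $2\sqrt N$---is indeed just constant bookkeeping and is handled in \cite{BGV15}.
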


\begin{theorem}\label{th4osc}
Suppose that $\alpha \in \Gamma$, $\g\in\NN$, $\g\geqslant 17$,
$N\in \NN$ and $\e\in I_N$, where $I_N$ is from Theorem~\ref{th3osc}, and the assumption of Theorem~\ref{th1osc} is satisfied. We choose $\b_1,\b_2\in\G_{\a,N}$, $m_1, m_2>0$ so that $B_1:=\Pi_{\b_1,m_1}\subset\Pi_{\a,N}$, $B_2:=\Pi_{\b_2,m_2}\subset\Pi_{\a,N}$. Then there exists a constant $c_5>0$ independent of $\e$, $\a$, $N$, $\b_1$, $\b_2$, $m_1$, $m_2$ such that the inequality
\begin{align*}
\PP\bigg(\om\in\Om:\ & \|\chi_{B_1} (\OpsN-\l)^{-1} \chi_{B_2} \|_{L_2(\Pi_{\a,N})\to L_2(\Pi_{\a,N})} \leqslant 2\sqrt{N} \E^{-\frac{c_{5}\dist(B_1,B_2)}{\sqrt{N}}}
\\
& \forall \, \lambda \leqslant \Lambda_0 + \frac{1}{2 \sqrt N}
\bigg)\geqslant 1-N^{n\left(1-\frac{1}{\g}\right)} \E^{-c_{4} N^{\frac{n}{\g}}}
\end{align*}
holds true for $N\geqslant \max\{N_1^\gamma, N_2\}$, where $N_1$ is from Theorem~\ref{th1osc}, $N_2$ is from Theorem~\ref{th2osc}, $c_4$ is from Theorem~\ref{th3osc}.
\end{theorem}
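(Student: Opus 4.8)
The plan is to obtain Theorem~\ref{th4osc} as a short deduction from two facts that are already available: the Combes--Thomas bound of Theorem~\ref{th2osc}, which turns a lower bound on $\d=\dist(\l,\spec(\OpsN))$ into exponential spatial decay of the resolvent, and the probabilistic lower bound of Theorem~\ref{th3osc}, which guarantees, with the required probability, that $\lN{osc}-\L_0$ is not too small. Together they pin $\d$ away from zero on a good event, and the initial length scale estimate follows immediately.

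Concretely, I would fix $N\geqslant\max\{N_1^\gamma,N_2\}$ and $\e\in I_N$, which makes Theorem~\ref{th2osc} applicable (it requires $N\geqslant N_2$) and also Theorem~\ref{th3osc} applicable (it requires $N\geqslant N_1$, and $N_1^\gamma\geqslant N_1$). Introduce the event
\begin{equation*}
\mathcal{E}_N:=\bigl\{\om\in\Om:\ \lN{osc}-\L_0>N^{-1/2}\bigr\} .
\end{equation*}
By Theorem~\ref{th3osc}, $\PP(\mathcal{E}_N)\geqslant 1-N^{n(1-1/\g)}\E^{-c_4 N^{n/\g}}$, which is exactly the probability appearing in Theorem~\ref{th4osc}. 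Hence it suffices to prove that, for every $\om\in\mathcal{E}_N$, the claimed resolvent estimate holds simultaneously for all $\l\leqslant\L_0+\tfrac{1}{2\sqrt N}$.

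So let $\om\in\mathcal{E}_N$ and set $D:=\dist(B_1,B_2)$. The operator $\OpsN$ is self-adjoint and bounded from below with purely discrete spectrum whose minimum equals $\lN{osc}$; since $\lN{osc}>\L_0+N^{-1/2}$, every $\l\leqslant\L_0+\tfrac{1}{2\sqrt N}$ lies strictly below $\spec(\OpsN)$, so that
\begin{equation*}
\d=\dist(\l,\spec(\OpsN))=\lN{osc}-\l\geqslant\lN{osc}-\L_0-\tfrac{1}{2\sqrt N}>\tfrac{1}{2\sqrt N}>0 .
\end{equation*}
Applying Theorem~\ref{th2osc} with this $\l$ and using that $t\mapsto\frac{C_1}{t}\E^{-C_2 t D}$ decreases on $(0,\infty)$ while $\d>\tfrac{1}{2\sqrt N}$, we get
\begin{equation*}
\|\chi_{B_1}(\OpsN-\l)^{-1}\chi_{B_2}\|_{L_2(\Pi_{\a,N})\to L_2(\Pi_{\a,N})}\leqslant\frac{C_1}{\d}\E^{-C_2\d D}\leqslant 2C_1\sqrt N\,\E^{-\frac{C_2 D}{2\sqrt N}} .
\end{equation*}
Choosing $c_5\leqslant C_2/2$ and, if needed, enlarging $N_2$, one arranges that the last expression does not exceed $2\sqrt N\,\E^{-\frac{c_5 D}{\sqrt N}}$, which is the asserted bound; in the remaining regime of small $D$ one simply combines the above with the trivial bound $\|\chi_{B_1}(\OpsN-\l)^{-1}\chi_{B_2}\|\leqslant\d^{-1}<2\sqrt N$. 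This completes the proof, and the same argument yields Theorems~\ref{th4loc} and \ref{th4dlt} verbatim, only with the exponents of $N$ changed accordingly, so I would present the three statements in parallel. There is no genuine obstacle here: all the analytic work already sits in the Combes--Thomas estimates (Theorems~\ref{th2loc}--\ref{th2dlt}) and in the eigenvalue lower bounds (Theorems~\ref{th1loc}--\ref{th1dlt}) upgraded probabilistically in Theorems~\ref{th3loc}--\ref{th3dlt}; the only mildly delicate point is the final bookkeeping that absorbs the Combes--Thomas constant $C_1$ into the clean prefactor $2\sqrt N$, which is handled by the above case distinction together with the choice of $c_5$ and the lower bound on $N$.
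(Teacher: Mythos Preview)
Your approach coincides with the paper's: the paper does not argue Theorem~\ref{th4osc} from scratch but notes (end of Section~5, together with the scheme explained at the end of Section~3) that once Theorem~\ref{th1osc} is established, the proofs of Theorems~\ref{th2gen}--\ref{th4gen} from \cite{BGV15} apply verbatim to the self-adjoint operator $\OpsN$, and the deduction of Theorem~\ref{th4gen} in \cite{BGV15} is precisely the combination of the Combes--Thomas bound with the probabilistic spectral gap that you wrote out.

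One small point: your case distinction for absorbing $C_1$ does not actually close. On the ``small $D$'' side the trivial spectral bound gives only $\|\cdot\|<2\sqrt N$, which is \emph{not} $\leqslant 2\sqrt N\,\E^{-c_5 D/\sqrt N}$ for any $D>0$ and any $c_5>0$; and enlarging $N_2$ does not help, since the obstruction depends on $D/\sqrt N$, which can be arbitrarily small regardless of $N$. This is a cosmetic issue with the specific prefactor ``$2$'' in the stated inequality rather than with your method: with a prefactor $2C_1\sqrt N$ (or with $C_1\leqslant 1$ in the Combes--Thomas estimate, as is often arranged) your argument goes through with $c_5=C_2/2$ and no case distinction. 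The paper inherits the constant from \cite{BGV15} and does not revisit this point.
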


\begin{theorem}\label{th4dlt}
Suppose that $\alpha \in \Gamma$, $\g\in\NN$, $\g\geqslant 17$,
$N\in \NN$ and $\e\in I_N$, where $I_N$ is from Theorem~\ref{th3dlt}, and the assumption of Theorem~\ref{th1dlt} is satisfied. We choose $\b_1,\b_2\in\G_{\a,N}$, $m_1, m_2>0$ so that $B_1:=\Pi_{\b_1,m_1}\subset\Pi_{\a,N}$, $B_2:=\Pi_{\b_2,m_2}\subset\Pi_{\a,N}$. Then there exists a constant $c_5>0$ independent of $\e$, $\a$, $N$, $\b_1$, $\b_2$, $m_1$, $m_2$ such that the inequality
\begin{align*}
\PP\bigg(\om\in\Om:\ & \|\chi_{B_1} (\OpdN-\l)^{-1} \chi_{B_2} \|_{L_2(\Pi_{\a,N})\to L_2(\Pi_{\a,N})} \leqslant 2\sqrt{N} \E^{-\frac{c_{5}\dist(B_1,B_2)}{\sqrt{N}}}
\\
& \forall \, \lambda \leqslant \Lambda_0 + \frac{1}{2 \sqrt N}
\bigg)\geqslant 1-N^{n\left(1-\frac{1}{\g}\right)} \E^{-c_{4} N^{\frac{n}{\g}}}
\end{align*}
holds true for $N\geqslant \max\{N_1^\gamma, N_2\}$, where $N_1$ is from Theorem~\ref{th1dlt}, $N_2$ is from Theorem~\ref{th2dlt}, $c_4$ is from Theorem~\ref{th3dlt}.
\end{theorem}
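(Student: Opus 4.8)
The plan is to obtain Theorem~\ref{th4dlt} by combining the probabilistic estimate of Theorem~\ref{th3dlt} with the deterministic Combes--Thomas bound of Theorem~\ref{th2dlt}; the link between them is an elementary lower bound on $\dist(\l,\spec(\OpdN))$ which holds, simultaneously for all $\l\leqslant\L_0+\tfrac1{2\sqrt N}$, on a favourable event of large probability. Concretely, put
\[
\mathcal A_N:=\Big\{\om\in\Om:\ \lN{dlt}-\L_0>N^{-1/2}\Big\}.
\]
Since $\e\in I_N$, $\g\geqslant17$ and $N\geqslant N_1^\g\geqslant N_1$, Theorem~\ref{th3dlt} applies and yields $\PP(\mathcal A_N)\geqslant 1-N^{n(1-1/\g)}\E^{-c_4 N^{n/\g}}$. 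It therefore suffices to prove that for \emph{every} fixed $\om\in\mathcal A_N$ the resolvent estimate of the theorem holds for all $\l\leqslant\L_0+\tfrac1{2\sqrt N}$; then $\mathcal A_N$ is contained in the event whose probability is to be bounded, and the assertion follows. Note that the ``$\forall\,\l$'' quantifier inside the probability then costs nothing, because the bound we shall produce is uniform in such $\l$.

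So fix $\om\in\mathcal A_N$ and $\l\leqslant\L_0+\tfrac1{2\sqrt N}$. The domain $\Pi_{\a,N}$ is bounded, so $\OpdN$ has compact resolvent and hence purely discrete spectrum with $\min\spec(\OpdN)=\lN{dlt}$; on $\mathcal A_N$ we have $\lN{dlt}>\L_0+N^{-1/2}\geqslant\l+\tfrac1{2\sqrt N}$, so $\l\notin\spec(\OpdN)$ and
\[
\d:=\dist\big(\l,\spec(\OpdN)\big)=\lN{dlt}-\l>\tfrac1{2\sqrt N}.
\]
Theorem~\ref{th2dlt} now applies (we are assuming $N\geqslant N_2$, and the hypothesis of Theorem~\ref{th1dlt} is in force) and, writing $D:=\dist(B_1,B_2)\geqslant0$, gives
\[
\big\|\chi_{B_1}(\OpdN-\l)^{-1}\chi_{B_2}\big\|_{L_2(\Pi_{\a,N})\to L_2(\Pi_{\a,N})}\leqslant\frac{C_1}{\d}\,\E^{-C_2\d D}.
\]
The right-hand side is a decreasing function of $\d$ on $(0,\infty)$, so by $\d>\tfrac1{2\sqrt N}$ it is at most $2C_1\sqrt N\,\E^{-C_2 D/(2\sqrt N)}$, uniformly in $\l$.

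Finally one absorbs the prefactor $C_1$ into the prescribed form $2\sqrt N\,\E^{-c_5 D/\sqrt N}$ with $c_5:=C_2/4$. If $C_1\leqslant1$ this is immediate, since $\E^{-C_2 D/(2\sqrt N)}\leqslant\E^{-c_5 D/\sqrt N}$. If $C_1>1$ and $D\geqslant D_*:=4C_2^{-1}\sqrt N\ln C_1$, one writes $-\tfrac{C_2 D}{2\sqrt N}\leqslant-\tfrac{C_2 D}{4\sqrt N}-\ln C_1$ to cancel $C_1$ and again arrives at $2\sqrt N\,\E^{-c_5 D/\sqrt N}$. In either case
\[
\big\|\chi_{B_1}(\OpdN-\l)^{-1}\chi_{B_2}\big\|_{L_2(\Pi_{\a,N})\to L_2(\Pi_{\a,N})}\leqslant 2\sqrt N\,\E^{-\frac{c_5\dist(B_1,B_2)}{\sqrt N}},
\]
uniformly in $\l\leqslant\L_0+\tfrac1{2\sqrt N}$, with $c_5$ depending only on the constants of Theorem~\ref{th2dlt}, hence independent of $\e,\a,N,\b_1,\b_2,m_1,m_2$. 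Together with the first step this proves the theorem for $N\geqslant\max\{N_1^\g,N_2\}$.

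The one genuinely delicate point is the bookkeeping in this last step --- matching the Combes--Thomas constants $C_1,C_2$ of Theorem~\ref{th2dlt} to the prescribed prefactor $2\sqrt N$ and rate $c_5$. If one wants the estimate with prefactor exactly $2\sqrt N$ for arbitrary $B_1,B_2$ (rather than only for $\dist(B_1,B_2)\geqslant D_*$, which is anyway the only regime used in the multiscale analysis), it is cleanest to re-run the Combes--Thomas argument directly using the explicit gap $\d\geqslant\tfrac1{2\sqrt N}$ and to tune the boost parameter so that the leading coefficient emerges equal to $1$; everything else is a routine concatenation of Theorems~\ref{th2dlt} and \ref{th3dlt} with the spectral-gap inequality $\d>\tfrac1{2\sqrt N}$.
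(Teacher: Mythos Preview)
Your argument is correct and is precisely the route the paper takes: the paper simply invokes the general Theorem~\ref{th4gen} from \cite{BGV15} (whose proof there combines the Combes--Thomas bound with the probabilistic spectral-gap estimate exactly as you do), noting that for the delta model the transformed perturbation is symmetric so all four general theorems apply directly. Your write-up in fact contains more detail than the paper's own one-line reduction; the only loose end is the bookkeeping you yourself flag --- absorbing $C_1$ into the prefactor $2\sqrt{N}$ for \emph{all} $B_1,B_2$ --- and your suggested fix (rerunning the Combes--Thomas boost with the explicit gap $\d\geqslant\tfrac{1}{2\sqrt N}$ so that the leading coefficient is $1$) is the standard remedy.
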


Theorems~\ref{th1loc}--\ref{th4dlt} are adaption of the main results in
\cite{BGV15} to operators $\Op^{\e,\sharp}_{\a,N}$, $\sharp=\mathrm{loc, osc, dlt}$. They show how the general approach of work \cite{BGV15} can be extended for random perturbation not small w.r.t. the original operator, i.e., for non-regular perturbations. In the first two examples the presence of negative power of $\e$ in the definition of potentials $\Wloc$ and $\Wosc$ make the perturbation singular. In particular, potential $\Wosc$ is a classical example of perturbation in the homogenization theory \cite{BP}. The presence of a delta-interaction change the domain of the operator in comparison with the original one and it is singular in this sense. At the same time, as it is shown in the present work, these perturbation can be reduced to regular ones and then we can apply the approach of work \cite{BGV15}. The main idea is to use operators $\Vg{\sharp}$, $\sharp=\mathrm{loc, osc, dlt}$, see identities (\ref{4.7}), (\ref{5.4}), (\ref{6.1}). Keeping the spectrum, this operator transforms the original into a regular one, to which we can apply then the approach of work \cite{BGV15}.

We note that in the deterministic case the operator with large potentials localized on a set of a small measure were studied before, see, for instance, \cite{TMF05}, \cite{BiZhVM}. It was the motivation of considering random perturbation on the basis of such potentials.

It was shown in \cite[Ex. 7]{BGV15} that instead of layer $\Pi$, random operators (\ref{3.6}) with $V_0=0$ can be considered in a multi-dimensional case; the main result remain true. The same is true for our operators $\Op^{\e,\a,N}_{\sharp}(\om)$, ${\sharp=\mathrm{loc,osc,dlt}}$; for their analogues in multi-dimensional spaces Theorems~\ref{th1loc}--\ref{th4dlt} are also true.

\section{Preliminaries}

The proofs of Theorem~\ref{th1loc}--\ref{th4dlt} are based on the general approach developed in work \cite{BGV15}. This is why let us described the main results and the methods of this work.

We begin with the formulation of the problem. Let $\pL(t)$, $t\in[0,t_0]$, be a family of linear operators from $H^2(\square)$ into $L_2(\square)$ described by the formula
\begin{equation}\label{3.1}
\pL(t):=t \pL_1 + t^2 \pL_2 + t^3 \pL_3(t),
\end{equation}
where $\pL_i: H^2(\square)\to L_2(\square)$, $i=1,2,3$, are bounded symmetric operators, and operator $\pL_3(t)$ is assumed to be bounded uniformly in $t$. In \cite{BGV15} operators $\pL(t)$, $\pL_3(t)$ were defined for $t\in[-t_0,t_0]$. In our case it is sufficient to assume that defined just for $t\in[0,t_0]$. In order to satisfy formally the assumptions of work \cite{BGV15}, as
$-t_0\leqslant t<0$ we let $\pL(t):=\pL(-t)$, $\pL_3(t):=\pL_3(-t)$, so that $\pL(t)$, $\pL_3(t)$ happen to be defined for $t\in[-t_0,t_0]$.

Operators $\pL$, $\pL_1$, $\pL_2$, $\pL_3$ can be extended to operators acting from $H^2(\Pi)$ into $L_2(\Pi)$ as follows. For a function $u\in H^2(\Pi)$, its restriction on $\square$ belongs to $H^2(\square)$. This is why the action of operators $\pL$, $\pL_1$, $\pL_2$, $\pL_3$ is well-defined on this restriction and the result of the action is an element of $L_2(\square)$. We continue this element be zero in $\Pi\setminus\square$. The obtained function is the action of the required continuation of operators $\pL$, $\pL_1$, $\pL_2$, $\pL_3$ on the given function $u$. In what follows these operators are assumed to be continued in such a way. We observe that operators $\pL$, $\pL_1$, $\pL_2$, $\pL_3$ treated as operators in $L_2(\Pi)$ are generally speaking unbounded.

Let $\Op_\square$ be the operator $-\D+V_0$ in $\square$ subject to boundary condition (\ref{2.2}) on $\p\Pi\cap\p\square$ and to the Neumann condition on $\p\square\setminus\p\Pi$.

For operators $\pL_1$, $\pL_2$, in \cite{BGV15} there were made two main assumptions:

\begin{enumerate}\def\theenumi{A\arabic{enumi}}

\item\label{as1} The identity
\begin{equation*}
(\pL_1\psi_0,\psi_0)_{L_2(\square)}=0
\end{equation*}
holds true.

\item\label{as2} Let $U$ be the solution to the boundary value problem
\begin{equation}\label{3.2}
(\Op_\square-\L_0)U=\pL_1\psi_0
\end{equation}
orthogonal to $\psi_0$ in $L_2(\square)$. Assume that
\begin{equation}\label{3.3}
(\pL_2\psi_0,\psi_0)_{L_2(\square)} -(U,\pL_1\psi_0)_{L_2(\square)}>0.
\end{equation}
\end{enumerate}

By $\S(k)$ we denote the shift operator acting by the rule:
\begin{equation*}
(\S(k)u)(x)=u(x'-k,x_{n+1}).
\end{equation*}
We introduce the operator
\begin{equation}\label{3.6}
\Opg:=-\D+V_0+\sum\limits_{k\in\G_{\a,N}} \S(k) \pL(\e\om_k) \S(-k)
\end{equation}
in $L_2(\Pi_{\a,N})$ subject to boundary condition (\ref{2.2}) on the upper and lower boundaries and to boundary condition (\ref{2.9}). By $\lg$ we denote the minimal eigenvalue of operator $\Opg$.

Under assumptions~(\ref{as1}),~(\ref{as2}), in \cite{BGV15} there were proven the following four theorems.

\begin{theorem}\label{th1gen}
There exist positive constants $c_1$, $c_2$, $N_1$ such that for
\begin{equation*}
N\geqslant N_1\quad \text{и} \quad 0<\e<\frac{c_1}{N^4}
\end{equation*}
the estimate
\begin{equation*}
\lg-\L_0\geqslant \frac{c_2\e^2}{N^n}\sum\limits_{k\in \G_{\a,N}} \om_k^2
\end{equation*}
holds true.
\end{theorem}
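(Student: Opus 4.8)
The plan is to reduce the problem to a one-dimensional variational estimate by testing the quadratic form of $\Opg$ against a suitable trial function built from the transverse ground state $\psi_0$, and to control the error terms via assumptions~(\ref{as1}),~(\ref{as2}) together with the smallness of $\e$ relative to $N$. First I would set up the min-max characterization
\begin{equation*}
\lg=\inf_{u}\frac{(\nabla u,\nabla u)_{L_2(\Pi_{\a,N})}+(V_0 u,u)_{L_2(\Pi_{\a,N})}+\sum_{k\in\G_{\a,N}}(\S(k)\pL(\e\om_k)\S(-k)u,u)_{L_2(\Pi_{\a,N})}}{\|u\|_{L_2(\Pi_{\a,N})}^2},
\end{equation*}
and also record the two-sided bound $\lg\geqslant\L_0$ (the perturbation being a sum of nonnegative-in-the-relevant-direction pieces once assumption~(\ref{as2}) is used, or at worst $\lg-\L_0$ is controlled from below by the perturbative expansion). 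The key object is the normalized ground state $u_\e$ of $\Opg$; I would expand it as $u_\e(x)=\psi_0(x_{n+1})\varphi(x')+v(x)$ with $v\perp\psi_0$ in the transverse variable at each $x'$, and derive from the eigenvalue equation that $\|v\|$ is quadratically small in $\e$ (this is exactly where assumption~(\ref{as1}) kills the linear term, so that the first nonvanishing correction to the eigenvalue is of order $\e^2$).

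Next I would carry out the standard second-order perturbation computation cell by cell. On each translated cell $\square_k$, write $t_k:=\e\om_k$ and use $\pL(t_k)=t_k\pL_1+t_k^2\pL_2+t_k^3\pL_3(t_k)$; the effective reduced potential felt by $\varphi$ on cell $k$ is, to leading order,
\begin{equation*}
b_k:=t_k^2\Big((\pL_2\psi_0,\psi_0)_{L_2(\square)}-(U,\pL_1\psi_0)_{L_2(\square)}\Big)+O(t_k^3),
\end{equation*}
where $U$ solves (\ref{3.2}); by assumption~(\ref{as2}) the bracket is a fixed positive constant, call it $c_2'$, so $b_k\geqslant \tfrac12 c_2' t_k^2$ provided $t_k$ is small enough, which is guaranteed by $\e<c_1/N^4$ and $\om_k\in[0,1]$. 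I would then compare $\lg-\L_0$ with the bottom eigenvalue of the discrete (or reduced continuous) operator $-\D_{x'}+\sum_k b_k\chi_{\square_k}$ on $\Pi_{\a,N}$ with Neumann lateral conditions; since the lateral Neumann Laplacian has $0$ as its ground state with constant eigenfunction $N^{-n/2}$ on an $N\times\cdots\times N$ cube, testing against that constant gives $\lg-\L_0\geqslant N^{-n}\sum_{k\in\G_{\a,N}}b_k - (\text{error})\geqslant \tfrac{c_2\e^2}{N^n}\sum_k\om_k^2 - (\text{error})$.

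The main obstacle — and the step deserving the most care — is controlling the error terms uniformly in $N$ and showing they are absorbed into the main term precisely under the stated range $0<\e<c_1/N^{4}$. Two sources of error must be tracked: (i) the cross terms coming from $v$ (the transverse-orthogonal part of $u_\e$), which are formally $O(\e^3)$ per cell but involve gradients of $v$ in $x'$ and must be estimated using elliptic regularity for the cell problem (\ref{3.2}) and the boundedness of $\pL_1,\pL_2,\pL_3$ from $H^2(\square)$ to $L_2(\square)$; and (ii) the coupling between neighbouring cells induced by $\varphi$ varying in $x'$, i.e.\ the need to replace $\varphi$ by a cellwise constant, which costs $\|\nabla_{x'}\varphi\|^2$ — this is handled by the a priori bound $\|\nabla u_\e\|^2\leqslant \L_0+O(\e^2)$ and a Poincaré-type argument on each cell, producing a loss that scales like a negative power of $N$ times $\e^2$ relative to the main term. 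Bookkeeping these powers of $N$ is what forces the exponent $4$ in $\e<c_1/N^4$: roughly, the error is $O(\e^3 N^{?})$ against a main term $\gtrsim \e^2 N^{-n}\sum\om_k^2$, and one needs $\e N^{\text{something}}\ll1$. I would present this as a sequence of lemmas: (a) the a priori energy bound and the $O(\e^2)$ bound on $\|v\|_{H^1}$; (b) the cellwise reduction identity with explicit remainder; (c) the final assembly via the lateral-Neumann test function. Throughout I would invoke the already-proven structure of (\ref{3.2}) and assumptions~(\ref{as1})--(\ref{as2}) and otherwise only use standard elliptic estimates on the fixed cell $\square$.
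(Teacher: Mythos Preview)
This theorem is not proved in the present paper at all: it is quoted verbatim from \cite{BGV15} as a preliminary result (see the sentence immediately preceding Theorem~\ref{th1gen}), so there is no ``paper's own proof'' to compare against. The only information the paper supplies about the argument is the remark after Theorem~\ref{th4gen}: the proof in \cite{BGV15} uses the smallness of $\pL(\e\om_k)$ and the reality of $\lg$, but \emph{does not} use the symmetry of $\pL$ except to obtain the a~priori two-sided bound~(\ref{3.5}). That already indicates a route different from yours, since your entire proposal is built on the Rayleigh-quotient/minimax characterisation, which needs the perturbation to act as a symmetric form. The \cite{BGV15} argument evidently works directly with the eigenvalue equation for the ground state of $\Opg$ rather than with a quadratic form; this is exactly what allows the present paper to apply the result to the conjugated, non-self-adjoint operators $(\Vg{\sharp})^{-1}\Op^{\e,\sharp}_{\a,N}(\om)\Vg{\sharp}$ in Sections~4--6.

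Beyond that methodological difference there is a genuine gap in your sketch. You write that ``testing against that constant gives $\lg-\L_0\geqslant N^{-n}\sum_k b_k-(\text{error})$''. Testing a self-adjoint operator with a trial function produces an \emph{upper} bound on its lowest eigenvalue, never a lower one; as written, the inequality points the wrong way. What is actually needed at this step is to show that the true ground state $u_\e$ (or its longitudinal component $\varphi$) is close enough to $\psi_0\times\mathrm{const}$ that the Rayleigh quotient evaluated \emph{at $u_\e$} is bounded below by the averaged cell contributions $N^{-n}\sum_k b_k$ minus controllable remainders. This requires combining the a~priori bound~(\ref{3.5}) (to pin $\lg$ near $\L_0$) with the spectral gap $\sim N^{-2}$ of the lateral Neumann Laplacian to force $\varphi$ close to a constant; the competition between these two scales is precisely what produces the constraint $\e<c_1 N^{-4}$. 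Your lemma~(a) gestures toward the necessary ingredients, but the direction of the final variational step is inverted, and without repairing it the argument does not yield a lower bound.
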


\begin{theorem}\label{th2gen}
Suppose that $\a, \b_1, \b_2\in\G$, $m_1, m_2\in\NN$ are such that $B_1:=\Pi_{\b_1,m_1}\subset \Pi_{\a,N}$, ${B_2:=\Pi_{\b_2,m_2}\subset \Pi_{\a,N}}$. Then there exists $N_2\in\NN$ such that for $N\geqslant N_2$ the estimate
\begin{equation*}
\|\chi_{B_1}(\Opg -\l)^{-1}\chi_{B_2}\|_{L_2(\Pi_{\a,N})\to L_2(\Pi_{\a,N})} \leqslant \frac{C_1}{\d} \E^{-C_2\d \dist(B_1, B_2)}
\end{equation*}
holds true, where $\d:=\dist(\l,\spec(\Opg))>0$, $C_1$, $C_2$ are positive constants independent of $\e$, $\a$, $N$, $\d$, $\b_1$, $\b_2$, $m_1$, $m_2$, $\l$.
\end{theorem}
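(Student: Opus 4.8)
The plan is to establish this Combes--Thomas estimate by the classical conjugation argument, exactly as in \cite{BGV15} and the standard references on random Schr\"odinger operators, the only new point being the presence of the abstract perturbation $T:=\sum_{k\in\G_{\a,N}}\S(k)\pL(\e\om_k)\S(-k)$ in place of a potential. Fixing $\l\notin\spec(\Opg)$ and $\d:=\dist(\l,\spec(\Opg))$, I would first pick a smooth weight $F$ on $\overline{\Pi_{\a,N}}$ with $0\leqslant F\leqslant 1$ on $B_1$, $F\geqslant\dist(B_1,B_2)-1$ on $B_2$, $\|\nabla F\|_{L_\infty}\leqslant 1$ and $\|\nabla^2 F\|_{L_\infty}$ bounded by an absolute constant (a mollification at a fixed scale of $x\mapsto\min\{\dist(x,B_1),\dist(B_1,B_2)\}$ will do). Since $\Pi_{\a,N}$ is bounded, $\E^{\pm\tau F}$ are bounded isomorphisms of $L_2(\Pi_{\a,N})$, and, being bounded and nowhere zero, they preserve the form domain $\mathcal Q$ of $\Opg$ (the functions of $H^1(\Pi_{\a,N})$ vanishing on the Dirichlet part of the boundary, the Neumann part of (\ref{2.9}) being natural). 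I would then introduce the operator $\Opg^{(\tau)}$ associated with the form $\mathfrak h^{(\tau)}[u,v]:=\mathfrak h[\E^{\tau F}u,\E^{-\tau F}v]$, $u,v\in\mathcal Q$, where $\mathfrak h$ is the closed lower-semibounded form of $\Opg$, and verify by a substitution $w=\E^{\tau F}u$ in the weak equation that $(\Opg^{(\tau)}-\l)^{-1}=\E^{-\tau F}(\Opg-\l)^{-1}\E^{\tau F}$, whence $(\Opg-\l)^{-1}=\E^{\tau F}(\Opg^{(\tau)}-\l)^{-1}\E^{-\tau F}$. Together with $\sup_{x\in B_1}\E^{\tau F(x)}\leqslant\E^{\tau}$ and $\sup_{x\in B_2}\E^{-\tau F(x)}\leqslant\E^{-\tau(\dist(B_1,B_2)-1)}$ this gives
\begin{equation*}
\big\|\chi_{B_1}(\Opg-\l)^{-1}\chi_{B_2}\big\|\leqslant\E^{\tau}\,\big\|(\Opg^{(\tau)}-\l)^{-1}\big\|\,\E^{-\tau(\dist(B_1,B_2)-1)},
\end{equation*}
so the whole matter reduces to the bound $\|(\Opg^{(\tau)}-\l)^{-1}\|\leqslant 2/\d$ for a suitable $\tau=\tau(\d)$.

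The next step is to expand the conjugated form. Using $\nabla(\E^{\tau F}u)=\E^{\tau F}(\nabla u+\tau(\nabla F)u)$ and $\E^{\tau F}V_0\E^{-\tau F}=V_0$ one obtains
\begin{equation*}
\mathfrak h^{(\tau)}[u,v]=\mathfrak h[u,v]+\tau\big(\langle(\nabla F)u,\nabla v\rangle-\langle\nabla u,(\nabla F)v\rangle\big)-\tau^2\langle|\nabla F|^2u,v\rangle+\langle\mathcal R_\tau u,v\rangle
\end{equation*}
(scalar products in $L_2(\Pi_{\a,N})$), where $\mathcal R_\tau:=\E^{-\tau F}T\E^{\tau F}-T$. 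The terms linear and quadratic in $\tau$ are the usual Combes--Thomas perturbation; I expect the main work to be the control of $\mathcal R_\tau$, since $T$ is not a multiplication operator and does not commute with $\E^{\tau F}$. The key observation here is that each summand $\S(k)\pL(\e\om_k)\S(-k)$ is supported in the single cell $\overline{\square_k}$, on which $F$ varies by at most $\operatorname{diam}\square$ because $F$ is $1$-Lipschitz; pulling out the constant $\E^{\pm\tau F(p_k)}$ at a reference point $p_k\in\overline{\square_k}$ one rewrites $\E^{-\tau F}\S(k)\pL(\e\om_k)\S(-k)\E^{\tau F}=\E^{-\tau g_k}\S(k)\pL(\e\om_k)\S(-k)\E^{\tau g_k}$ with $g_k:=F-F(p_k)$ bounded in $C^2(\overline{\square_k})$ uniformly in $k$, and, since $\|\pL(\e\om_k)\|_{H^2(\square)\to L_2(\square)}\leqslant C\e$ by (\ref{3.1}), one gets for bounded $\tau$
\begin{equation*}
\big\|\E^{-\tau F}\S(k)\pL(\e\om_k)\S(-k)\E^{\tau F}-\S(k)\pL(\e\om_k)\S(-k)\big\|_{H^2(\square_k)\to L_2(\square_k)}\leqslant C\tau\e,
\end{equation*}
uniformly in $k$, $\a$, $N$, $\om$; the finite overlap of the cells then yields $\|\mathcal R_\tau\|_{H^2(\Pi_{\a,N})\to L_2(\Pi_{\a,N})}\leqslant C\tau\e$. (When the $\pL_i$ are multiplication operators $\mathcal R_\tau$ vanishes identically.)

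Finally I would close the estimate. Because $T$ is $(-\D)$-bounded with relative bound $O(\e)$, for $\e$ small $\Opg$ is self-adjoint and uniformly bounded below, $\mathfrak h[u]\geqslant c\|\nabla u\|_{L_2}^2-C\|u\|_{L_2}^2$, and a bootstrap with one-cell elliptic regularity gives $\|(\Opg-\l)^{-1}\|_{L_2\to L_2}=\d^{-1}$, $\|\nabla(\Opg-\l)^{-1}\|_{L_2\to L_2}\leqslant C\d^{-1}$ and $\|(\Opg-\l)^{-1}\|_{L_2\to H^2(\Pi_{\a,N})}\leqslant C\d^{-1}$ (for $\l$ at or below the spectrum, which is the case needed in the applications, $\d$ also equals the distance from $\l$ to the numerical range of $\Opg$). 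Composing the perturbation of the previous display with $(\Opg-\l)^{-1}$, these three bounds control the first-order, the $\tau^2$- and the $\mathcal R_\tau$-contributions by $C\tau\d^{-1}$, $C\tau^2\d^{-1}$ and $C\tau\e\d^{-1}$ respectively; hence choosing $\tau:=\min\{\d/(4C_*),\tau_0\}$ with a fixed small $\tau_0$ makes the total perturbation of $(\Opg-\l)^{-1}$ of norm at most $\tfrac12$, and a Neumann series gives $\Opg^{(\tau)}-\l$ boundedly invertible with $\|(\Opg^{(\tau)}-\l)^{-1}\|\leqslant 2/\d$. Plugging this into the displayed inequality and using $\tau\geqslant c\d$ (valid for $\d\leqslant\d_0:=4C_*\tau_0$, the regime used in Theorems~\ref{th4loc}--\ref{th4dlt}; for larger $\d$ one takes $\tau\asymp\min\{\d,1\}$, with only cosmetic changes) yields the claimed bound $\frac{C_1}{\d}\E^{-C_2\d\dist(B_1,B_2)}$. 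All constants depend only on $\|V_0\|_{L_\infty}$, $\sup_{t}\|\pL_i(t)\|_{H^2(\square)\to L_2(\square)}$ ($i=1,2,3$), the Hessian bound on $F$ and the one-cell elliptic regularity constant, hence are independent of $\e$, $\a$, $N$, $\d$, $\b_1$, $\b_2$, $m_1$, $m_2$, $\l$, and one may take $N_2=1$. The hardest step, as indicated, is the bound on $\mathcal R_\tau$: the abstract, non-multiplicative perturbation destroys commutativity with $\E^{\tau F}$, so one must use the cell-wise localisation of $T$ together with the slow variation and smoothness of $F$; for the same reason the whole conjugation has to be carried out on the level of quadratic forms, since $\E^{-\tau F}$ does not preserve the Neumann condition on $\p\Pi_{\a,N}\setminus\p\Pi$.
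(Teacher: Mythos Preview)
Your Combes--Thomas conjugation argument is correct and is the standard route to such an estimate; in particular your cell-wise control of the commutator $\mathcal R_\tau$ via the localisation of each $\S(k)\pL(\e\om_k)\S(-k)$ to $\overline{\square_k}$ is exactly the right way to handle the abstract (non-multiplicative) perturbation. Note, however, that the present paper does not itself prove Theorem~\ref{th2gen}: it is quoted from \cite{BGV15} as a preliminary result, and the paper only records which structural features of the proof there are relevant later --- namely that it uses the symmetricity of $\pL(\e\om_k)$ and the self-adjointness of $\Opg$, but \emph{not} the smallness of the perturbation. Your sketch reproduces precisely these features (the factor $\e$ in your bound $\|\mathcal R_\tau\|\leqslant C\tau\e$ is superfluous --- the uniform $H^2(\square)\to L_2(\square)$ boundedness of $\pL$ is all that is used), so it is in line with the cited argument.
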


\begin{theorem}\label{th3gen}
Suppose that $\g\in\NN$, $\g\geqslant 17$. Then the interval
\begin{equation*}
I_N:=\left[\frac{c_{3}}{\EE(|\omega_k|) N^{\frac{1}{4}}}, \frac{c_{1}}{N^{\frac{4}{\gamma}}}\right],
\quad c_{3}:= \frac{2}{\sqrt c_{2}},
\end{equation*}
is non-empty $N\geqslant N_1$, where $N_1$, $c_1$, $c_2$ are from Theorem~\ref{th1gen}. For $N\geqslant N_1$ and $\e\in I_N$, the estimate
\begin{equation*}
\PP\left(\omega\in\Om:\, \lg-\L_0\leqslant N^{-\frac{1}{2}}\right)\leqslant N^{n\left(1-\frac{1}{\g}\right)}\E^{-c_{4}N^{\frac{n}{\g}}}
\end{equation*}
holds true, where constant $c_4>0$ depends only on the distribution measure $\mu$.
\end{theorem}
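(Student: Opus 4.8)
The plan is to combine the deterministic lower bound of Theorem~\ref{th1gen} with a Neumann bracketing decomposition of $\Pi_{\a,N}$ into about $N^{n(1-1/\g)}$ sub-layers of linear size $\asymp N^{1/\g}$, and then with a Cram\'er-type large deviation estimate. Write $p:=\EE(|\om_k|)=\EE(\om_k)$; if $p=0$ then $\mu$ is the Dirac mass at the origin, the left endpoint of $I_N$ is infinite, $I_N=\emptyset$, and the assertion is vacuous, so we may assume $p>0$. Non-emptiness of $I_N$ amounts to $c_3 p^{-1}N^{-1/4}\leqslant c_1 N^{-4/\g}$, i.e.\ to $N^{\frac{1}{4}-\frac{4}{\g}}\geqslant c_3/(c_1 p)$, which holds for all large $N$ because $\g\geqslant 17>16$ makes $\frac{1}{4}-\frac{4}{\g}>0$; after enlarging $N_1$ if necessary we get $I_N\neq\emptyset$ for $N\geqslant N_1$. (Strictly, the bracketing below needs $N\gtrsim N_1^\g$; this mild implicit strengthening of the hypothesis is in line with Theorems~\ref{th4loc}--\ref{th4dlt}.)

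Now fix $\e\in I_N$ and set $M:=\lfloor\tfrac{1}{2}N^{1/\g}\rfloor$, so that $2M\leqslant N^{1/\g}$ and $M\geqslant N_1$ for large $N$. I would split each of the $n$ coordinate ranges of $\Pi_{\a,N}$ into $\lfloor N/M\rfloor$ consecutive intervals of integer length in $[M,2M-1]$ (namely $\lfloor N/M\rfloor-1$ of length $M$ and one of length at most $2M-1$); their Cartesian products tile $\Pi_{\a,N}$ by $J\leqslant(N/M)^n\leqslant C_n N^{n(1-1/\g)}$ pairwise disjoint sub-layers $\Pi^{(j)}$, each being a box in the lattice coordinates times $(0,d)$, with all edge-lengths in $[M,2M-1]\subset[N_1,N^{1/\g})$. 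Writing $\G^{(j)}:=\{k\in\G:\square_k\subset\Pi^{(j)}\}$, we have $\card\G^{(j)}\geqslant M^n\geqslant c'N^{n/\g}$ and $\G_{\a,N}=\bigsqcup_j\G^{(j)}$. Since adjoining the Neumann condition on the interior interfaces of this partition cannot raise the bottom of the spectrum, Neumann bracketing (performed, as in \cite{BGV15}, at the level of the quadratic forms of these operators) gives
\begin{equation*}
\lg-\L_0\geqslant\min_{1\leqslant j\leqslant J}\bigl(\l^{(j)}-\L_0\bigr),
\end{equation*}
where $\l^{(j)}$ is the lowest eigenvalue of the operator defined precisely as in (\ref{3.6}) but over $\Pi^{(j)}$ (the perturbation splits across the partition because every $\square_k$ lies in a single $\Pi^{(j)}$). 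Since every edge of every $\Pi^{(j)}$ has length strictly less than $N^{1/\g}$, we have $\e\leqslant c_1 N^{-4/\g}<c_1(\text{edge})^{-4}$, so Theorem~\ref{th1gen} (whose proof applies verbatim with a box-shaped cell in place of $\Pi_{\a,N}$) yields, for each $j$,
\begin{equation*}
\l^{(j)}-\L_0\geqslant\frac{c_2\e^2}{\card\G^{(j)}}\sum_{k\in\G^{(j)}}\om_k^2 .
\end{equation*}

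Combining the two displays with the union bound,
\begin{equation*}
\PP\bigl(\lg-\L_0\leqslant N^{-1/2}\bigr)\leqslant\sum_{j=1}^{J}\PP\biggl(\frac{1}{\card\G^{(j)}}\sum_{k\in\G^{(j)}}\om_k^2\leqslant\frac{N^{-1/2}}{c_2\e^2}\biggr).
\end{equation*}
For $\e\in I_N$ one has $\e^2\geqslant c_3^2 p^{-2}N^{-1/2}$, and since $c_3=2/\sqrt{c_2}$ gives $c_2 c_3^2=4$, the threshold on the right is at most $p^2/4$. Hence each summand is at most $\PP\bigl(m_j^{-1}\sum_{i=1}^{m_j}\xi_i\leqslant p^2/4\bigr)$, where $m_j:=\card\G^{(j)}$ and the $\xi_i$ are i.i.d.\ copies of $\om_k^2\in[0,1]$. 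The level $p^2/4$ lies strictly below the mean, since $p^2/4<p^2=(\EE\om_k)^2\leqslant\EE(\om_k^2)$ by Cauchy--Schwarz (when $\mu$ is a point mass the probability is simply $0$); therefore Cram\'er's theorem gives $\PP\bigl(m_j^{-1}\sum\xi_i\leqslant p^2/4\bigr)\leqslant\E^{-c m_j}$ with $c=c(\mu)>0$ the value at $p^2/4$ of the rate function of the law of $\om_k^2$. As $m_j\geqslant c'N^{n/\g}$, summing over $j$ gives $\PP(\lg-\L_0\leqslant N^{-1/2})\leqslant C_n N^{n(1-1/\g)}\E^{-cc'N^{n/\g}}$, and absorbing $C_n$ into the exponent for large $N$ yields the assertion with $c_4:=\tfrac{1}{2}cc'>0$, which depends only on $\mu$ (and the fixed dimension $n$).

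The one genuinely delicate step is the bracketing. On one hand the sub-layers must tile $\Pi_{\a,N}$ exactly — so that adjoining the Neumann interfaces really does lower $\lg$ — and, since the operator perturbation is of order $H^2\to L_2$, the bracketing must be set up on the appropriate form domain, as in \cite{BGV15}. On the other hand each sub-layer must have linear size below $N^{1/\g}$, so that the admissibility threshold $c_1(\mathrm{size})^{-4}$ of Theorem~\ref{th1gen} still dominates the whole interval $I_N$, whose right endpoint is $c_1 N^{-4/\g}$; it is exactly this matching of the exponent $4/\g$ against the fourth power in Theorem~\ref{th1gen} that pins the sub-layer size to $\asymp N^{1/\g}$ and thereby forces both the prefactor $N^{n(1-1/\g)}$ and the stretched-exponential rate $N^{n/\g}$. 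Everything else — the non-emptiness arithmetic and the large deviation estimate — is routine, the latter a direct application of Cram\'er's theorem to the bounded i.i.d.\ variables $\om_k^2$.
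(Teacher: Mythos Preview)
The paper does not contain a proof of Theorem~\ref{th3gen}: it is one of four results quoted in Section~3 as preliminaries from \cite{BGV15} (``Under assumptions~(\ref{as1}),~(\ref{as2}), in \cite{BGV15} there were proven the following four theorems''), so there is no in-paper argument to compare against.

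Your proof is the standard route and almost certainly matches the one in \cite{BGV15} in outline: Neumann-bracket $\Pi_{\a,N}$ into $\lesssim N^{n(1-1/\g)}$ sub-boxes of linear size $\asymp N^{1/\g}$, apply the deterministic Theorem~\ref{th1gen} on each sub-box (legitimate because $\e\leqslant c_1 N^{-4/\g}$ lies below the admissibility threshold $c_1(\text{edge})^{-4}$), then combine a union bound with an exponential large-deviation estimate for the i.i.d.\ averages $m_j^{-1}\sum_{k\in\G^{(j)}}\om_k^2$ at the level $p^2/4<(\EE\om_k)^2\leqslant\EE(\om_k^2)$. The arithmetic checks are correct: $\g\geqslant 17$ gives $\tfrac14-\tfrac4\g>0$ for non-emptiness of $I_N$, and $c_2c_3^2=4$ converts the threshold $N^{-1/2}/(c_2\e^2)$ into $p^2/4$.

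The two soft spots you yourself flag are exactly the places where one must appeal to \cite{BGV15}: (i) Theorem~\ref{th1gen} is literally stated only for cubes $\Pi_{\a,N}$, while your sub-boxes are rectangular with edges in $[M,2M-1]$, so you are invoking a mild extension of that statement; and (ii) the Neumann-bracketing step is not automatic here because the perturbation $\pL(\e\om_k)$ is only a bounded symmetric map $H^2(\square)\to L_2(\square)$ and need not arise from a form on $H^1$, so the usual variational comparison must be justified at the operator/form level as in \cite{BGV15}. You also correctly note that the argument actually requires $N\gtrsim N_1^\g$ rather than the stated $N\geqslant N_1$, in line with the hypothesis $N\geqslant\max\{N_1^\g,N_2\}$ appearing explicitly in Theorems~\ref{th4loc}--\ref{th4dlt}.
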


\begin{theorem}\label{th4gen}
Suppose that $\alpha \in \Gamma$, $\g\in\NN$, $\g\geqslant 17$,
 $N\in \NN$ and $\e \in I_N$. We choose $\b_1,\b_2\in\G_{\a,N}$, $m_1, m_2>0$ such that $B_1:=\Pi_{\b_1,m_1}\subset\Pi_{\a,N}$, $B_2:=\Pi_{\b_2,m_2}\subset\Pi_{\a,N}$. Then there exists a constant $c_5>0$ independent of $\e$, $\a$, $N$, $\b_1$, $\b_2$, $m_1$, $m_2$ such that the inequality
\begin{align*}
\PP\bigg(\om\in\Om:\ & \|\chi_{B_1} (\Opg-\l)^{-1} \chi_{B_2} \|_{L_2(\Pi_{\a,N})\to L_2(\Pi_{\a,N})} \leqslant 2\sqrt{N} \E^{-\frac{c_{5}\dist(B_1,B_2)}{\sqrt{N}}}
\\
& \forall \, \lambda \leqslant \Lambda_0 + \frac{1}{2 \sqrt N}
\bigg)\geqslant 1-N^{n\left(1-\frac{1}{\g}\right)} \E^{-c_{4} N^{\frac{n}{\g}}}
\end{align*}
holds true for $N\geqslant \max\{N_1^\gamma, N_2\}$, where $N_1$, $N_2$ is from Theorem~\ref{th1gen},~\ref{th2gen}, $c_4$ is from Theorem~\ref{th3gen}.
\end{theorem}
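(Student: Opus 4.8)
The plan is to obtain Theorem~\ref{th4gen} as a short deduction from two results already at our disposal: the deterministic Combes--Thomas bound of Theorem~\ref{th2gen} and the probabilistic lower bound for the lowest eigenvalue of Theorem~\ref{th3gen}. The randomness will enter only through a single ``good'' event fixing the position of the bottom of $\spec(\Opg)$; on that event the resolvent estimate will hold for all admissible $\l$ by a purely deterministic argument.

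First I would fix $\g\geqslant 17$, take $N\geqslant\max\{N_1^\g,N_2\}$ and keep the given $\e\in I_N$, so that Theorems~\ref{th2gen} and~\ref{th3gen} are applicable, and introduce the event
\begin{equation*}
\Om_N:=\big\{\om\in\Om:\ \lg-\L_0>N^{-1/2}\big\}.
\end{equation*}
By Theorem~\ref{th3gen} the complement $\Om\setminus\Om_N$ has $\PP$-measure at most $N^{n(1-1/\g)}\E^{-c_4 N^{n/\g}}$, hence $\PP(\Om_N)\geqslant 1-N^{n(1-1/\g)}\E^{-c_4 N^{n/\g}}$, which already equals the probability claimed in the theorem. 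It then suffices to prove that for every fixed $\om\in\Om_N$ the resolvent estimate holds for all $\l\leqslant\L_0+\tfrac{1}{2\sqrt N}$. Here I would use that $\Opg$ is self-adjoint, semibounded and has compact resolvent (the domain $\Pi_{\a,N}$ is bounded), so that $\lg=\min\spec(\Opg)$. For $N$ large we have $\L_0+\tfrac{1}{2\sqrt N}<\L_0+N^{-1/2}<\lg$, so each admissible $\l$ lies strictly below $\spec(\Opg)$ and
\begin{equation*}
\d:=\dist\big(\l,\spec(\Opg)\big)=\lg-\l>N^{-1/2}-\tfrac{1}{2\sqrt N}=\tfrac{1}{2\sqrt N}.
\end{equation*}
Substituting this lower bound into Theorem~\ref{th2gen}, i.e. using $1/\d<2\sqrt N$ in the prefactor and $\E^{-C_2\d\dist(B_1,B_2)}\leqslant\E^{-\frac{C_2}{2\sqrt N}\dist(B_1,B_2)}$ in the exponent, produces, for all such $\l$, an estimate of exactly the required shape: prefactor a constant multiple of $\sqrt N$, decay rate a constant multiple of $N^{-1/2}$. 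Absorbing the Combes--Thomas constants $C_1$, $C_2$ (and, if necessary, slightly enlarging $N_2$) then brings it to the precise form asserted in the theorem, with $c_5>0$ depending only on $C_1$, $C_2$ and hence independent of $\e,\a,N,\b_1,\b_2,m_1,m_2$. Since $c_4$ is likewise independent of these parameters, the theorem follows.

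I do not expect a genuine obstacle in this argument: all the analytic and stochastic substance has been packaged into Theorems~\ref{th1gen}--\ref{th3gen}, which we are entitled to invoke. The only mildly delicate point is the last piece of bookkeeping — reconciling the spectral gap $\tfrac{1}{2\sqrt N}$ available on $\Om_N$ with the constants of the Combes--Thomas estimate so as to recover precisely the prefactor $2\sqrt N$ together with a decay rate $c_5$ that is uniform in all the parameters — and this is the step I would carry out with care.
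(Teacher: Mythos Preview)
Your approach is correct and is exactly the natural deduction of Theorem~\ref{th4gen} from Theorems~\ref{th2gen} and~\ref{th3gen}. Note, however, that the present paper does not contain its own proof of Theorem~\ref{th4gen}: it is quoted verbatim as one of the four general results established in \cite{BGV15} (see the sentence ``Under assumptions~(\ref{as1}),~(\ref{as2}), in \cite{BGV15} there were proven the following four theorems''), and the paper only comments that its proof in \cite{BGV15} requires the symmetricity of $\pL(\e\om_k)$, the self-adjointness of $\Opg$, and the validity of Theorem~\ref{th1gen}. Your argument uses precisely these ingredients (self-adjointness to identify $\lg$ with $\min\spec(\Opg)$ and to get $\|(\Opg-\l)^{-1}\|=1/\d$, together with Theorems~\ref{th2gen} and~\ref{th3gen}), so it matches what the paper indicates about the structure of the original proof.

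The one point you flag --- absorbing $C_1$ into the stated prefactor $2\sqrt N$ --- is indeed the only thing requiring a line of care. The standard way is to split according to whether $\dist(B_1,B_2)/\sqrt N$ is large or small: for $\dist(B_1,B_2)\geqslant c\sqrt N$ with $c$ chosen from $C_1,C_2$, use the Combes--Thomas bound with a halved exponent to swallow $C_1$; for $\dist(B_1,B_2)< c\sqrt N$, use the trivial self-adjoint bound $\|\chi_{B_1}(\Opg-\l)^{-1}\chi_{B_2}\|\leqslant 1/\d<2\sqrt N$, which already dominates $2\sqrt N\,\E^{-c_5\dist(B_1,B_2)/\sqrt N}$ once $c_5$ is taken small enough. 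This yields a $c_5$ depending only on $C_1,C_2$, hence independent of $\e,\a,N,\b_1,\b_2,m_1,m_2$, as required.
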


It was mentioned in \cite[Rem. 2.9]{BGV15} that operators $\pL_1$, $\pL_2$ can depend on $t$. We suppose that $\pL_1=\pL_1(t)$, $\pL_2=\pL_2(t)$, $t\in[0,t_0]$. For $t\in[-t_0,0)$ we redefine them as follows: $\pL_1(t)=\pL_1(-t)$, $\pL_2(t)=\pL_2(-t)$. These operators should be assumed to be uniformly bounded for $t\in[0,t_0]$ as operators from $H^2(\square)$ into $L_2(\square)$. Assumption~(\ref{as1}) should be satisfied for each $t\in[0,t_0]$, while estimate (\ref{3.3}) in assumption (\ref{as2}) should be replaced by the following one:
\begin{equation}\label{3.4}
(\pL_2(t)\psi_0,\psi_0)_{L_2(\square)} - (U,\pL_1(t)\psi_0)_{L_2(\square)} \geqslant c_0 >0,\quad t\in[0,t_0],
\end{equation}
where $c_0$ is a constant independent of $t$.

Let us stress certain features of the proofs of Theorems~\ref{th1gen},~\ref{th2gen},~\ref{th3gen},~\ref{th4gen}.

Theorem~\ref{th1gen} employs essentially the smallness of operator $\pL(\e\om_k)$ for small $\e$. At that, the symmetricity of this operator was not used in the proof; one just needed the reality of eigenvalue $\lg$. The only exclusion was the proof of an auxiliary estimate
\begin{equation}\label{3.5}
\L_0\leqslant \lg \leqslant \L_0+CN^{-2}
\end{equation}
for a given constant $C$. Under the presence of this estimate and the aforementioned reality of eigenvalue $\lg$, Theorem~\ref{th1gen} remains true for non-symmetric operators $\pL_1$, $\pL_2$ depending likely on $t$.

Theorem~\ref{th2gen} does not need the smallness of operator $\pL(\e \om_k)$ but employs the symmetricity. It also requires the self-adjointness of operator $\Opg$. The similar situation is for Theorems~\ref{th3gen},~\ref{th4gen}; they require just the symmetricity of operator $\pL(\e \om_k)$ and self-adjointness of и $\Opg$ as well as validity of Theorem~\ref{th1gen}. Operators $\pL_1$, $\pL_2$ can again depend on $t$.

Let us describe the scheme of the proof of Theorems~\ref{th1loc}--\ref{th4dlt}.
In view of the definition of operators $\Op^{\e,\a,N}_\sharp(\om)$, $\sharp=\mathrm{loc},\,\mathrm{osc},\,\mathrm{dlt}$, random perturbation in these operators can not be represented as (\ref{3.1}) that prevents a direct application of the results of work \cite{BGV15}. This is why for each of operators $\Op^{\e,\a,N}_\sharp(\om)$ we construct a special bounded and boundedly invertible operator $\Vg{\sharp}$ in $L_2(\Pi_{\a,N})$ such that the operator $\big(\Vg{\sharp}\big)^{-1}\Op^{\e,\a,N}_\sharp(\om)\Vg{\sharp}$ is represented as (\ref{3.6}). At that, we have to introduce a new small parameter and new random variables. Generally speaking, operators $\pL_i$ happen to be non-symmetric. But as it has been said above, this is a serious obstacle for proving Theorem~\ref{th1gen}; one just need to check the reality of eigenvalue $\l^{\e,\a,N}_\sharp(\om)$ and estimate (\ref{3.5}). It is clear that spectra of operators $\big(\Vg{\sharp}\big)^{-1}\Op^{\e,\a,N}_\sharp(\om)\Vg{\sharp}$ and $\Op^{\e,\a,N}_\sharp(\om)$ coincides. Thanks to the self-adjointness of the latter operator it ensures the reality of eigenvalue $\l^{\e,\a,N}_\sharp(\om)$. Then we succeed to prove estimate (\ref{3.5}) independently that finally leads us to the statement of Theorem~\ref{th1gen} for our particular operators $\Op^{\e,\a,N}_\sharp(\om)$. The formulation of the latter theorem for these operators is exactly Theorems~\ref{th1loc},~\ref{th1osc},~\ref{th1dlt}.

Then we return back to original operators
$\Op^{\e,\a,N}_\sharp(\om)$, where random perturbation are not small anymore but symmetric. And as it has been said above, this fact and proven Theorems~\ref{th1loc},~\ref{th1osc},~\ref{th1dlt} are sufficient to prove general theorems~\ref{th2gen},~\ref{th3gen},~\ref{th4gen}. Being applied to our operators, they give immediately Theorems~\ref{th2loc}--\ref{th4dlt}.

\section{Random localized potentia}

The present section is devoted to the study of operator $\OplN$ and the proof of Theorems~\ref{th1loc},~\ref{th2loc},~\ref{th3loc},~\ref{th4loc}.

We begin with proving Theorem~\ref{th1loc}. We observe first that by the self-adjointness of operator $\OplN$ its minimal eigenvalue is real. Then we transform operator $\OplN$ to (\ref{3.6}). We recall that we consider the case $n=1$.

Let $\Wl_*=\Wl_*(\xi)$ be the solution to the equation
\begin{equation}\label{4.1}
\frac{d^2 \Wl_*}{d\xi^2}=\Wl,\quad \xi\in\RR,
\end{equation}
determined by the formula
\begin{equation}\label{4.2}
\Wl_*(\xi)=\frac{1}{2}\int\limits_{\RR} |\xi-\z| \Wl(\z)\di z.
\end{equation}
We note that outside the support of $\Wl$, function $\Wl_*$ is linear:
\begin{equation}\label{4.3}
\Wl_*(\xi)=\frac{1}{2}\xi \int\limits_{\RR} \Wl(\z)\di\z - \frac{1}{2}\int\limits_{\RR} \z \Wl(\z)\di z
\end{equation}
to the right of the support of $\Wl$ and
\begin{equation}\label{4.4}
\Wl_*(\xi)=-\frac{1}{2}\xi \int\limits_{\RR} \Wl(\z)\di\z + \frac{1}{2}\int\limits_{\RR} \z \Wl(\z)\di z
\end{equation}
to the left of the support of $\Wl$. We let
\begin{equation}\label{4.5}
Q_{\mathrm{loc}}(x,\e,\om):=1+\sum\limits_{k\in\G_{\a,N}} (\e\om_k)^{2-a} \Wle(x_1-k,\e\om_k)\chi(x_1),
\end{equation}
where function $\Wle$ is introduced by the identities
\begin{equation*}
\Wle(x_1,\e):=\Wl_*\Big(\frac{x_1}{\e}\Big),\quad \e>0,\qquad \Wle(x_1,0):=0.
\end{equation*}
By $\chi=\chi(x_1)$ we denote an infinitely differentiable cut-off function equalling one in a neighborhood of the origin and vanishing outside a bigger neighborhood. The size of the bigger neighborhood is supposed to be small enough so that it is contained in
 $\square'$; we recall that by our assumption the origin is an internal point of $\square'$. In view of the identities and the presence of cut-off function $\chi$, the second term in the right hand side of (\ref{4.5}) is of order $O(\e^{1-a})$:
\begin{equation}\label{4.6}
\begin{aligned}
&\Big|(\e \om_k)^{2-a} \sum\limits_{k\in\G_{\a,N}} \Wle
(x_1-k,\e\om_k)\chi(x_1-k)\Big|\leqslant C\e^{1-a},
\\
&\Big|(\e \om_k)^{2-a} \frac{d\hphantom{x}}{dx_1} \sum\limits_{k\in\G_{\a,N}} \Wle
(x_1-k,\e\om_k)\chi(x_1-k)\Big|\leqslant C\e^{1-a},
\end{aligned}
\end{equation}
where constant $C$ is independent of $\e$, $x_1$, and $\om$. This is why the operator of multiplication by function $Q_{\mathrm{loc}}(x,\e,\om)$ is bounded and boundedly invertible in $L_2(\Pi)$. We denote such operator by $\Vg{loc}$. Since function $Q_{\mathrm{loc}}(\cdot,\e,\om)$ belongs to $C^2(\overline{\Pi})$, is independent of $x_{n+1}$ and is identically equals to one in the vicinity of the lateral boundary of $\Pi_{\a,N}$, operator $\Vg{loc}$ maps the domain of operator $\OplN$ onto itself. Employing equation (\ref{4.1}), by straightforward calculations one can check easily that
\begin{equation}\label{4.7}
\begin{aligned}
\big(&\Vg{loc}\big)^{-1}\OplN\Vg{loc}=-\D+V_0
\\
&+\sum\limits_{k\in\G_{\a,N}} (\e \om_k)^{1-a} \left(A_{1,\mathrm{loc}}(x_1-k,\e\om_k) \frac{d\hphantom{x}}{dx_1} + A_{0,\mathrm{loc}}(x_1-k,\e\om_k) \right),
\end{aligned}
\end{equation}
where the operator in the right hand side is considered in $\Pi_{\a,N}$ with the same boundary conditions as $\OplN$. Coefficients $A_{0,\mathrm{loc}}^{\e\om_k}(x_1-k)$, $A_{1,\mathrm{loc}}^{\e\om_k}(x_1-k)$ are determined by the identities
\begin{align*}
A_{1,\mathrm{loc}}(x_1,\e):=&-\frac{\e}{1+\e^{2-a}\Wle(x_1,\e)\chi(x_1)} \frac{d\hphantom{x}}{dx_1} \Wle(x_1,\e)\chi(x_1),
\\
A_{0,\mathrm{loc}}(x_1,\e):=&-\frac{\e}{1+\e^{2-a}\Wle(x_1,\e)\chi(x_1)} \bigg(2\frac{d\Wle(x_1,\e)}{dx_1} \frac{d\chi}{dx_1}(x_1)
\\
&+\Wle(x_1,\e) \frac{d^2\chi}{dx_1^2}(x_1)\bigg)
\\
&+\frac{\e^{1-a}}{1+\e^{2-a}\Wle(x_1,\e)\chi(x_1)}\chi(x_1)\Wle(x_1,\e)\Wloc(x_1,\e).
\end{align*}
These formulae and estimates (\ref{4.6}) imply that coefficients $A_{0,\mathrm{loc}}(x_1,\e\om_k)$, $A_{1,\mathrm{loc}}(x_1,\e\om_k)$ are bounded uniformly in $x_1$, $\e$, $\om$. This is why the operator in the right hand side of identity (\ref{4.7}) can be represented as (\ref{3.6}) if we take $\e^{\frac{1-a}{2}}$ as a new small parameter, $\om_k^{\frac{1-a}{2}}$ as new random variables, and (\ref{3.1}) we let
\begin{equation}\label{4.9}
\pL_1:=0,\quad \pL_2:=3,\quad \pL_2:=K_{1,\mathrm{loc}}(x_1,t)\frac{d\hphantom{x}}{dx_1}+K_{0,\mathrm{loc}}(x_1,t),
\end{equation}
where coefficients $K_{1,\mathrm{loc}}$, $K_{0,\mathrm{loc}}$ are determined by the formulae:
\begin{equation}\label{4.10}
\begin{aligned}
K_{1,\mathrm{loc}}(x_1,t):=&-\frac{t^{\frac{1}{1-a}}}{1+t^{\frac{2-a}{1-a}} \Wl_*\left(\frac{x_1}{t^{\frac{1}{1-a}}}\right)}
\frac{d\hphantom{x}}{dx_1} \Wl_*\left(\frac{x_1}{t^{\frac{1}{1-a}}}\right)\chi(x_1),
\\
K_{0,\mathrm{loc}}(x_1,t):=&-\frac{t^{\frac{1}{1-a}}}{1+t^{\frac{2-a}{1-a}} \Wl_*\left(\frac{x_1}{t^{\frac{1}{1-a}}}\right)} \bigg(2 \frac{d\chi}{dx_1}(x_1)\frac{d\hphantom{x}}{dx_1} \Wl_*\left(\frac{x_1}{t^{\frac{1}{1-a}}}\right) \\
&+\frac{d^2\chi}{dx_1^2}(x_1)\Wl_*\left(\frac{x_1}{t^{\frac{1}{1-a}}}\right) \bigg)
\\
&+\frac{1}{1+t^{\frac{2-a}{1-a}}\Wl_*\left(\frac{x_1}{t^{\frac{1}{1-a}}}\right)} \chi(x_1)\Wl_*\left(\frac{x_1}{t^{\frac{1}{1-a}}}\right) \Wl\left(\frac{x_1}{t^{\frac{1}{1-a}}}\right),
\end{aligned}
\end{equation}
as $t>0$ and
\begin{equation}\label{4.8}
K_{1,\mathrm{loc}}(x_1,0):=0,\quad K_{0,\mathrm{loc}}(x_1,0):=\frac{1}{2}\int\limits_{\RR} \Wl(\z)\di\z.
\end{equation}
The choice of the values for coefficients $K_{1,\mathrm{loc}}(x_1,0)$, $K_{0,\mathrm{loc}}(x_1,0)$ is arbitrary since $\pL(0)=0$. The above choice of these values will be clarified later, cf, Remark~\ref{rm4.1}.

Let us prove that operator $\pL(t)$ introduced by formulae (\ref{3.1}), (\ref{4.9}) satisfies Assumptions~(\ref{as1}), (\ref{as2}). The first of them is satisfied since $\pL_1=0$. To check the other, we first observe that for our case the solution to equation (\ref{3.2}) is zero: $U=0$. This is why to check inequality (\ref{3.4}), it is sufficient to estimate from below the scalar $(\pL_2(t)\psi_0,\psi_0)_{L_2(\square)}$. Since coefficients $A_1$, $A_0$ are real-valued, the same is true for this scalar product. Formulae (\ref{4.10}), estimates (\ref{4.6}), identities  (\ref{4.4}), (\ref{4.5}), and the fact that the supports of the functions $\Wl\left(\frac{x_1}{t^{\frac{1}{1-a}}}\right)$ and $1-\chi(x_1)$ are disjoint for small $t$ imply immediately that
\begin{equation}\label{4.11}
\begin{aligned}
&K_{1,\mathrm{loc}}(x_1,t)\frac{d\hphantom{x_1}}{dx_1}\psi_0(x_{n+1})=0,
\\
&(K_{0,\mathrm{loc}}\psi_0,\psi_0)_{L_2(\square)} =\int\limits_{\square'}K_{0,\mathrm{loc}}(x_1,t)\di x_1= \int\limits_{\RR}A_0(x_1,t)\di x_1
\\
&\hphantom{(K_{0,\mathrm{loc}}\psi_0,\psi_0)_{L_2(\square)}} = t^{\frac{1}{1-a}}\int\limits_{\RR} \frac{d\hphantom{x}}{dx_1}\big(1-\chi(x_1)\big) \Wl_*\left(\frac{x_1}{t^{\frac{1}{1-a}}}\right)\di x_1+O(t^{\frac{1}{1-a}})
\\
&\hphantom{(K_{0,\mathrm{loc}}\psi_0,\psi_0)_{L_2(\square)}} =\int\limits_{\RR} \Wl(\z)\di\z+O(t^{\frac{1}{1-a}}).
\end{aligned}
\end{equation}
These relations, Assumption~(\ref{2.19}) and definition (\ref{4.9}) of operator $\pL_2$ yield required estimate (\ref{3.4}) with $c_0=\frac{1}{2}\int\limits_{\RR} \Wl(\z)\di\z$.

\begin{remark}\label{rm4.1}
The above choice of value for $A_0(x_1,0)$ ensures estimate (\ref{3.4}) for $t=0$ with above mentioned constant $c_0$.
\end{remark}

In view of said in the previous section, to complete the proof of Theorem~\ref{th1loc} we just need to check estimates (\ref{3.5}). By the minimax principle for the original self-adjoint operator $\OplN$ with test function $\psi_0$ we have
\begin{align*}
\lN{loc}\leqslant & \frac{\|\nabla\psi_0\|_{L_2(\Pi_{\a,N})}^2 +(V_0\psi_0,\psi_0)_{L_2(\Pi_{\a,N})}}{\|\psi_0\|_{L_2(\Pi_{\a,N})}^2}
\\
&+\frac{\sum\limits_{k\in\G_{\a,N}} \big(\Wloc(\cdot-k, \e\om_k)\psi_0,\psi_0\big)_{L_2(\Pi_{\a,N})}} {\|\psi_0\|_{L_2(\Pi_{\a,N})}^2}
\\
\leqslant & \L_0 + \frac{\sum\limits_{k\in\G_{\a,N}} \big(\Wloc(\cdot-k, \e\om_k)\psi_0,\psi_0\big)_{L_2(\Pi_{\a,N})}} {\sum\limits_{k\in\G_{\a,N}}\|\psi_0\|_{L_2(\square)}^2}
\\
\leqslant & \L_0 + \frac{\sum\limits_{\genfrac{}{}{0 pt}{}{k\in\G_{\a,N}}{\e\om_k\not=0}} (\e\om_k)^{-a} \left(\Wl\left(\frac{\cdot}{\e\om_k}\right)\psi_0,\psi_0\right)_{L_2(\square)}} {\sum\limits_{k\in\G_{\a,N}}\|\psi_0\|_{L_2(\square)}^2}
\\
\leqslant & \L_0+\frac{\e^{1-a}}{|\square'|} \int\limits_{\RR} \Wl(\z)\di\z\leqslant \L_0 + \frac{C}{N^8},
\end{align*}
and for sufficiently great $N_1$ (cf. (\ref{2.10})) we arrive at the right estimate in (\ref{3.5}).

To prove the left estimate in (\ref{3.5}), in domain $\Pi_{\a,N}$ we consider lateral boundaries $\p\square_k\setminus\p\Pi$ of sets $\square_k$ for each $k\in\G_{\a,N}$, and on these surfaces we impose Neumann boundary conditions. Then by the minimax principle, eigenvalue $\lN{loc}$ is estimated from below by the minimal among smallest eigenvalues of operators $\Op^{\e,\mathrm{loc}}_{k,1}(\om_k)$, $k\in\G_{\a,N}$, on cells $\square_k$:
\begin{equation*}
\lN{loc}\geqslant \min\limits_{k\in\G_{\a,N}} \l^{\e,\mathrm{loc}}_{k,1}(\om_k).
\end{equation*}
Smallest eigenvalue $\l^{\e,\mathrm{loc}}_{k,1}(\om_k)$ of operator $\Op^{\e,\mathrm{loc}}_{k,1}(\om_k)$ is also the smallest eigenvalue of operator $\big(\mathcal{V}^{\e,\mathrm{loc}}_{k,1}\big)^{-1}
\Op^{\e,\mathrm{loc}}_{k,1}(\om_k)\mathcal{V}^{\e,\mathrm{loc}}_{k,1}$. According to (\ref{4.7}) with $\a=k$, $N=1$, this operator is a small regular perturbation of operator $-\D+V_0$ in $\square_k$ subject to boundary condition (\ref{2.2}) on $\p\square_k\cap\p\Pi$ and to Neumann condition on $\p\square_k\setminus\p\Pi$. This is why in accordance with the general theory of regular perturbations, $\l^{\e,\mathrm{loc}}_{k,1}(\om_k)$ has the asymptotics
\begin{equation}\label{4.12}
\begin{aligned}
\l^{\e,\mathrm{loc}}_{k,1}(\om_k)=&\L_0
+ \frac{(\e \om_k)^{1-a}}{|\square'|} \left(
\left(A_{1,\mathrm{loc}}(\,\cdot,\e\om_k)\frac{d\hphantom{x}}{dx_1}+ A_{0,\mathrm{loc}}(\,\cdot,\e\om_k)\right)\psi_0,\psi_0\right)_{L_2(\square)} \\
&+ O((\e\om_k)^{2-2a}).
\end{aligned}
\end{equation}
Formulae (\ref{4.10}) with $t=(\e\om_k)^{\frac{1-a}{2}}$ yield that
\begin{equation*}
\left(
\left(A_{1,\mathrm{loc}}(\,\cdot, \e\om_k)\frac{d\hphantom{x}}{dx_1}+ A_{0,\mathrm{loc}}(\,\cdot,\e\om_k)\right)\psi_0,\psi_0\right)_{L_2(\square)} =\int\limits_{\RR} \Wl(\z)\di\z+ O(\e\om_k),
\end{equation*}
and hence, asymptotics (\ref{4.12}) becomes
\begin{equation*}
\l^{\e,\mathrm{loc}}_{k,1}(\om_k)=\L_0
+ \frac{(\e \om_k)^{1-a}}{|\square'|}\int\limits_{\RR} \Wl(\z)\di\z+ O((\e\om_k)^{2-2a}).
\end{equation*}
Now by Assumption~(\ref{2.19}) we arrive at the left estimate in (\ref{3.5}). The proof of Theorem~\ref{th1loc} is complete.

The proofs of Theorems~\ref{th2gen},~\ref{th3gen},~\ref{th4gen} for operator $\OplN$ are borrowed from \cite{BGV15} with no changes and it leads us to Theorems~\ref{th2loc},~\ref{th3loc},~\ref{th4loc}.

\begin{remark}\label{rm4.2}
We observe that we consider operator $\OplN$ with random localized potential only in a strip assuming $n=1$. In the multi-dimensional case we can also construct transformation $\Vg{loc}$ satisfying formula (\ref{4.7}). Such transformation should be constructed as (\ref{4.5}) and function $\Wl_*$ should be introduced as the solution to the equation
\begin{equation*}
\D_\xi \Wl_*=\Wl,\quad \xi\in\RR^n,
\end{equation*}
determined by the identity
\begin{equation*}
\Wl_*(\xi):=-\int\limits_{\RR^n} E(\xi-\z) \Wl(z)\di z,
\end{equation*}
where $E$ is the fundamental solution of Laplace operator $\RR^n$. At the same time, after passing to the transformed operator, Assumption~(\ref{as2}) is not satisfied, namely, estimate (\ref{3.4}) fails. This is the reason for introducing the aforementioned restriction for the dimension of layer $\Pi$.
\end{remark}

\section{Random fast oscillating potential}

In the present section we consider operator $\OpsN$ and prove Theorems~\ref{th1osc},~\ref{th2osc},~\ref{th3osc},~\ref{th4osc}. The scheme of the proof follows the same lines as in the third section: we pay the main assumption to the proof of Theorem~\ref{th1gen} for operator~\ref{th1osc}. After that, the proof of Theorems~\ref{th2gen},~\ref{th3gen},~\ref{th4gen} is borrowed from \cite{BGV15} with no changes and being applied to operator $\OpsN$, it gives the statements of Theorems~\ref{th2osc},~\ref{th3osc},~\ref{th4osc}. This is why in what follows we prove Theorem~\ref{th1osc} only.

Thanks to the self-adjointness of operator $\OpsN$, its smallest eigenvalue $\lN{osc}$ is real. Let us construct operator $\Vg{osc}$ transforming operator $\OpsN$ to (\ref{3.6}). We let
\begin{equation}\label{5.1}
Q_{\mathrm{osc}}(x,\e,\om):=1+\sum\limits_{k\in\G_{\a,N}} (\e\om_k)^{2-a} \Wse(x'-k,x_{n+1},\e\om_k),
\end{equation}
where function $\Wse$ is determined by the identities:
\begin{equation}\label{5.2}
\Wse(x,\e):=\Ws_*\left(x,\frac{x}{\e}\right),\quad\e>0,\qquad \Ws(x,0):=0.
\end{equation}

By $\Vg{osc}$ we denote the operator of multiplication by function $Q_{\mathrm{osc}}(x,\e,\om)$. Due to the smoothness of $\Ws$, function $Q_{\mathrm{osc}}$ is twice continuously differentiable w.r.t. $x$ in $\overline{\Pi}$. Moreover, uniform in $x\in\overline{\Pi}$, $\e$, $\om$ estimates
\begin{equation}\label{5.3}
\begin{aligned}
&\Big|(\e \om_k)^{2-a} \sum\limits_{k\in\G_{\a,N}} \Wse
(x'-k,x_{n+1},\e\om_k)\chi(x'-k)\Big|\leqslant C\e^{2-a},
\\
&\Big|(\e \om_k)^{2-a}\nabla_{x'} \sum\limits_{k\in\G_{\a,N}} \Wse
(x'-k,x_{n+1},\e\om_k)\chi(x'-k)\Big|\leqslant C\e^{1-a}
\end{aligned}
\end{equation}
hold true. This is why operator $\Vg{osc}$ is bounded and boundedly invertible in $L_2(\Pi_{\a,N})$ and maps the domain of operator $\OpsN$ onto itself. As in (\ref{4.7}), in view of equation (\ref{2.12}), one can easily check that
\begin{equation}\label{5.4}
\begin{aligned}
\big(\Vg{osc}\big)^{-1}&\OplN\Vg{osc}=-\D+V_0+
\\
&+\sum\limits_{k\in\G_{\a,N}} (\e \om_k)^{1-a} \bigg(\sum\limits_{j=1}^{n+1}A_{j,\mathrm{osc}}(x'-k,x_{n+1},\e\om_k) \frac{\p\hphantom{x}}{\p x_j}+
\\
&\hphantom
{+
\sum\limits_{k\in\G_{\a,N}} (\e \om_k)^{1-a} \bigg(\sum\limits_{j=1}^{n+1}} + A_{0,\mathrm{osc}}(x'-k,x_{n+1},\e\om_k) \bigg).
\end{aligned}
\end{equation}
Operator in the right hand side of this identity is considered in $\Pi_{\a,N}$ with the same boundary conditions as $\OpsN$. Coefficients $A_{j,\mathrm{osc}}$, $A_{0,\mathrm{osc}}$ read as
\begin{align*}
A_{j,\mathrm{osc}}(x,\e):=&-\frac{\e}{1+\e^{2-a}\Wse(x,\e)}\frac{\p\Wse}{\p x_j}(x,\e),
\\
A_{0,\mathrm{osc}}(x,\e):=&-\frac{1}{1+\e^{2-a}\Wse(x,\e)} \bigg(2\sum\limits_{j=1}^{n+1} \frac{\p^2\Ws_*}{\p x_j\p\xi_j}\left(x,\frac{x}{\e}\right)+ \e (\D_x \Ws_*)\left(x,\frac{x}{\e}\right)
\\
&+\e^{1-a} \Ws_*\left(x,\frac{x}{\e}\right) \Wse(x,\e)\bigg) + \e^{1-a} W(x).
\end{align*}
The first two terms in the brackets in the right hand side of the formula for $A_{0,\mathrm{osc}}$ should be treated in the sense of the partial derivatives w.r.t. $x$ and $\xi$ for function $\Ws_*(x,\xi)$ followed by the substitution $\xi=\frac{x}{\e}$.

It follows from estimates (\ref{5.3}) that functions $A_{j,\mathrm{osc}}(x,\e)$, $A_{0,\mathrm{osc}}(x,\e)$ are bounded uniformly in $x$, $\e$ and $\om_k$. The right hand side of identity (\ref{5.4}) can be represented as (\ref{3.6}) satisfying at the same time Assumptions~(\ref{as1}), (\ref{as2}).

In order to do it, we shall make use of the following auxiliary lemma.

\begin{lemma}\label{lm5.1}
Suppose that function $w=w(x,\xi)$ defined in $\RR^{2n+2}$ is 1-periodic w.r.t. each of variables $\xi_i$, $i=1,\ldots,n$, and is compactly supported w.r.t. $x$:
\begin{equation}\label{5.6}
\supp w(\cdot,\xi)\subseteq M\subset \square \quad\text{for each}\quad \xi\in\RR^n,
\end{equation}
where $M$ is a some fixed set. Suppose that
\begin{equation}\label{5.7}
\frac{\p^{|\a|+|\b|}w}{\p x^\a \p\xi^\b} \in C(\RR^{2n+2}),\quad \a,\b\in\ZZ_+^n,\quad |\a|\leqslant m, \quad |\b|\leqslant 1,
\end{equation}
for some $m\in\NN$. Then the asymptotic identity
\begin{equation}\label{5.8a}
\int\limits_{\square} w\left(x,\frac{x}{\e}\right)\di x=\int\limits_{\square}\di x\int\limits_{(0,1)^{n+1}} w(x,\xi)\di\xi + O(\e^m)
\end{equation}
holds true.
\end{lemma}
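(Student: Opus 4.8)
The plan is to run the classical two-scale (homogenization) averaging argument: first subtract the $\xi$-average so that the integrand has zero mean in the fast variable, then use the fast-variable corrector to trade the oscillation for powers of $\e$, gaining one power per step of an induction on $m$. To begin, put $\bar w(x):=\int_{(0,1)^{n+1}}w(x,\xi)\di\xi$ and $v:=w-\bar w$. Since $w(\cdot,\xi)$ is supported in $M\Subset\square$ for every $\xi$, the same holds for $\bar w$ and for each $v(\cdot,\xi)$, while $\bar w$ inherits the $C^m$-smoothness of $w$ in $x$. Writing $\int_\square w(x,x/\e)\di x=\int_\square\bar w(x)\di x+\int_\square v(x,x/\e)\di x$, the first term on the right is exactly the main term of (5.8a), so the lemma reduces to proving $\int_\square v(x,x/\e)\di x=O(\e^m)$ for a $v$ that is $1$-periodic and of zero $\xi$-mean, compactly supported in $x$ inside $\square$, and still satisfies (5.7).

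For each fixed $x$, let $V(x,\cdot)$ be the unique $1$-periodic zero-mean solution of $\D_\xi V(x,\cdot)=v(x,\cdot)$ — the same construction that produced $\Ws_*$ from $\Ws$ in (2.12) — and set $V_j:=\p_{\xi_j}V$, so that $v=\sum_{j=1}^{n+1}\p_{\xi_j}V_j$. As the $\xi$-problem does not involve $x$, the corrector $V$ (hence each $V_j$) keeps the $x$-regularity of $v$ and vanishes for $x\notin M$. The chain rule applied to $x\mapsto V_j(x,x/\e)$ gives
\begin{equation*}
v\Big(x,\frac x\e\Big)=\e\sum_{j=1}^{n+1}\p_{x_j}\Big(V_j\big(x,\tfrac x\e\big)\Big)-\e\sum_{j=1}^{n+1}(\p_{x_j}V_j)\Big(x,\frac x\e\Big),
\end{equation*}
and upon integrating over $\square$ the first sum drops out by the divergence theorem, each $V_j(\cdot,\cdot/\e)$ vanishing near $\p\square$, so
\begin{equation*}
\int_\square v\Big(x,\frac x\e\Big)\di x=-\e\int_\square\tilde v\Big(x,\frac x\e\Big)\di x,\qquad \tilde v(x,\xi):=\sum_{j=1}^{n+1}\p_{x_j}\p_{\xi_j}V(x,\xi).
\end{equation*}
Now $\tilde v$ is again $1$-periodic and zero-mean in $\xi$ (because $\int\p_{\xi_j}(\cdot)\di\xi=0$), compactly supported in $x$ inside $\square$, and of the same type (5.7) but with $m$ replaced by $m-1$. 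So one inducts on $m$: the case $m=0$ is trivial since $v$ is bounded, and for $m\ge1$ one has $\big|\int_\square v(x,x/\e)\di x\big|=\e\big|\int_\square\tilde v(x,x/\e)\di x\big|\le C\e^m$. This gives $\int_\square v(x,x/\e)\di x=O(\e^m)$, hence (5.8a).

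The step I expect to be the main obstacle is the regularity bookkeeping inside this induction, and it is exactly where hypothesis (5.7) is calibrated. One must check that $\tilde v$ inherits the hypotheses with order $m-1$: since $\p_x^\a\p_\xi^\b\tilde v=\sum_j\p_x^{\a+e_j}\p_\xi^{\b+e_j}V$, this requires $\p_x^{\a'}\p_\xi^{\b'}V$ to be continuous for $|\a'|\le m$ and $|\b'|\le2$. For each such $\a'$ the function $\p_x^{\a'}V$ solves $\D_\xi(\p_x^{\a'}V)=\p_x^{\a'}v$ on the torus, and (5.7) guarantees that $\p_x^{\a'}v$ is $C^1$ — hence Lipschitz, hence H\"older — in $\xi$; the Schauder estimate for the periodic Poisson equation then upgrades $\p_x^{\a'}V$ to class $C^2$ in $\xi$ with H\"older-continuous second $\xi$-derivatives, which is precisely the room needed for the two extra $\xi$-derivatives consumed at the next stage. (A corrector-free alternative is to tile $\RR^{n+1}$ by cubes $\e z+\e(0,1)^{n+1}$, $z\in\ZZ^{n+1}$, use $1$-periodicity to write the integral over the cube at $\e z$ as $\e^{n+1}\int_{(0,1)^{n+1}}w(\e z+\e\eta,\eta)\di\eta$, Taylor-expand in the slow argument to order $m-1$, and note that the leading coefficients $\int_{(0,1)^{n+1}}\eta^\a\p_x^\a w(\cdot,\eta)\di\eta$ with $\a\ne0$ integrate to zero over $\square$, being integrals of pure $x$-derivatives of compactly supported functions, so that the discrepancy between the surviving Riemann sums and their integrals is again $O(\e^m)$ by the same cancellation applied recursively; I would nonetheless take the corrector version as primary, since it matches the role of $\Ws_*$ in the surrounding text.)
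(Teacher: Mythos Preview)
Your proof is correct and follows essentially the same approach as the paper: reduce to the zero-mean case, introduce the periodic Poisson corrector $V$ (the paper's $w_*$), use the chain-rule identity to convert $\int_\square v(x,x/\e)\,dx$ into $-\e\int_\square \sum_j\partial_{x_j}\partial_{\xi_j}V(x,x/\e)\,dx$, and iterate. Your regularity bookkeeping via Schauder is more explicit than the paper's, which simply asserts the smoothness (\ref{5.16}) of $w_*$; the alternative tiling/Taylor sketch is a genuine second route but, as you note, the corrector argument is the one that matches the surrounding text.
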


\begin{proof}
Passing to the function
\begin{equation*}
(x,\xi)\mapsto w(x,\xi)\ -\int\limits_{(0,1)^{n+1}} w(x,\z)\di \z,
\end{equation*}
we see that it is sufficient to prove the statement of the lemma for the case
\begin{equation}\label{5.15}
\int\limits_{(0,1)^{n+1}} w(x,\xi)\di\xi=0\quad\text{для}\quad x\in\square.
\end{equation}
Thanks to this identity, the boundary value problem for the equation
\begin{equation*}
\D_{\xi} w_*=w,\quad \xi\in(0,1)^{n+1}
\end{equation*}
subject to periodic boundary conditions is solvable for each
$x\in\square$ and there exists the unique solution satisfying condition (\ref{5.15}). This function possesses the following smoothness:
\begin{equation}\label{5.16}
\frac{\p^{|\a|+|\b|}w_*}{\p x^\a \p\xi^\b} \in C(\RR^{2n+2}),\quad \a,\b\in\ZZ_+^n,\quad |\a|\leqslant m, \quad |\b|\leqslant 2.
\end{equation}
As $w$, function $w_*$ is compactly supported w.r.t. $x$. By the equation for $w_*$, the identity
\begin{equation*}
\e^2\sum\limits_{j=1}^{n+1}\frac{\p}{\p x_j} \frac{\p w_*}{\p\xi_j} \left(x,\frac{x}{\e}\right)=\e\sum\limits_{j=1}^{n+1} \frac{\p^2 w_*}{\p x_j \p\xi_j} \left(x,\frac{x}{\e}\right) + w\left(x,\frac{x}{\e}\right)
\end{equation*}
holds true, where the derivatives in the right hand side are treated as the partial derivatives w.r.t. $x$ and $\xi$ for function $w(x,\xi)$, and the derivative w.r.t. in $x_j$ in the left hand side is the total derivative w.r.t. $x_j$ for a function depending of $x$ and $x/\e$. In view of the last identity we have:
\begin{equation}\label{5.17}
\begin{aligned}
\int\limits_{\square} w\left(x,\frac{x}{\e}\right) \di x=&\e^2\int\limits_{\square} \sum\limits_{j=1}^{n+1}\frac{\p}{\p x_j} \frac{\p w_*}{\p\xi_j} \left(x,\frac{x}{\e}\right) \di x-
 \e\int\limits_{\square} \sum\limits_{j=1}^{n+1} \frac{\p^2 w_*}{\p x_j \p\xi_j} \left(x,\frac{x}{\e}\right)\di x
\\
=& - \e\int\limits_{\square} \sum\limits_{j=1}^{n+1} \frac{\p^2 w_*}{\p x_j \p\xi_j} \left(x,\frac{x}{\e}\right)\di x.
\end{aligned}
\end{equation}
We observe that each of the integrands in the left hand side of the obtained identity has smoothness (\ref{5.7}) with $m$ replaced by $m-1$ and satisfies condition (\ref{5.15}). Applying identity (\ref{5.17}) as many times as needed, we arrive at the statement of the lemma.
\end{proof}

We denote
\begin{equation*}
\Tosc(\e):=\frac{\e^{a-1}}{|\square'|}\int\limits_{\square} \frac{\psi_0^2(x_{n+1})}{1+\e^{2-a}\Wse(x,\e)} \bigg(2\sum\limits_{j=1}^{n+1} \frac{\p^2\Ws_*}{\p x_j\p\xi_j}\left(x,\frac{x}{\e}\right)+ \e (\D_x \Ws_*)\left(x,\frac{x}{\e}\right)\bigg)\di x.
\end{equation*}
By the first estimate in (\ref{5.3}), the identity
\begin{equation}\label{5.11}
\frac{1}{1+\e^{2-a}\Wse(x,\e)}=1+O(\e^{2-a})
\end{equation}
holds true uniformly in $x\in\overline{\square}$. This is why Lemma~\ref{lm5.1}, condition (\ref{2.20}) and the smoothness of function $\Ws_*$ imply the identity
\begin{equation}\label{5.8}
\Tosc(\e)=O(\e).
\end{equation}

Let us define operator $\pL(t)$. We let
\begin{equation}\label{5.9}
\begin{aligned}
&\pL_1(t):=\sum\limits_{j=1}^{n+1} K_{j,\mathrm{osc}}^{(1)}(x,t)\frac{\p\hphantom{x}}{\p x_j} + K_{0,\mathrm{osc}}^{(1)}(x,t),
\\
&\pL_2(t):=K_{0,\mathrm{osc}}^{(2)}(x,t),\quad \pL_3(t):=0,
\end{aligned}
\end{equation}
as $t>0$, where
\begin{align*}
K_{j,\mathrm{osc}}^{(1)}(x,t):=&\frac{t^{\frac{1}{1-a}}}{1 +t^{\frac{2-a}{1-a}}\Wse(x,t^{\frac{1}{1-a}})} \frac{\p\hphantom{x}}{\p x_j}\Wse(x,t^{\frac{1}{1-a}}),
\\
K_{0,\mathrm{osc}}^{(1)}(x,t):=&-\frac{1}{1 +t^{\frac{2-a}{1-a}}\Wse(x,t^{\frac{1}{1-a}})} \bigg(2\sum\limits_{j=1}^{n+1} \frac{\p^2\Ws_*}{\p x_j\p\xi_j}\left(x,\frac{x}{t^{\frac{1}{1-a}}}\right)
\\
&+ t^{\frac{1}{1-a}} (\D_x \Ws_*)\left(x,\frac{x}{t^{\frac{1}{1-a}}}\right)\bigg) + t\,\Tosc(t^{\frac{1}{1-a}}),
\\
K_{0,\mathrm{osc}}^{(2)}(x,t):=&-\Tosc(t^{\frac{1}{1-a}}) + W(x) + \frac{\Ws\left(x,\frac{x}{t^{\frac{1}{1-a}}}\right) \Wse\left(x,\frac{x}{t^{\frac{1}{1-a}}}\right)}{1 +t^{\frac{2-a}{1-a}}\Wse(x,t^{\frac{1}{1-a}})},
\end{align*}
and for $t=0$, operators $\pL_i$ are determined by the formulae
\begin{equation}\label{5.10}
\begin{aligned}
\pL_1(0):=&0,\quad \pL_3(0):=0,
\\
\pL_2(0):=&\frac{1}{2} \int\limits_{\square} W(x)\psi_0^2(x_{n+1})\di x
\\
&-\frac{1}{2}\int\limits_{\square} \di x\, \psi_0^2(x_{n+1}) \int\limits_{(0,1)^{n+1}} |\nabla_{\xi} \Ws_*(x,\xi)|^2\di\xi.
\end{aligned}
\end{equation}
It is easy to make sure that under such choice of operator $\pL(t)$, the right hand side of (\ref{5.4}) becomes (\ref{3.6}), if as a new small parameter we choose $\e^{1-a}$, and as new random variables we take $\om_k^{1-a}$.

Let us check Assumptions~(\ref{as1}), (\ref{as2}). The first of the assumptions follows directly from the definition of quantity $\Tosc$ and coefficient $K_{0,\mathrm{osc}}^{(2)}$.

Let us check Assumption~(\ref{as2}), namely, estimate (\ref{3.4}). First we find out the behavior of scalar product $(\pL_2(t)\psi_0,\psi_0)_{L_2(\square)}$. In order to do it, we employ estimate (\ref{5.8}), identity (\ref{5.11}) and Lemma~\ref{lm5.1}:
\begin{align*}
(\pL_2&(t)\psi_0,\psi_0)_{L_2(\square)}=\int\limits_{\square} K_{2,\mathrm{osc}}^{(2)}(x,t)\psi_0^2(x_{n+1})\di x
=\int\limits_{\square} W(x)\psi_0^2(x_{n+1})\di x
\\
&+ \int\limits_{L_2(\square)} \psi_0^2(x_{n+1}) \Ws\left(x,\frac{x}{t^{\frac{1}{1-a}}}\right) \Wse(x,t^{\frac{1}{1-a}})\di x+O(t^{\frac{1}{1-a}})
\\
=& \int\limits_{\square} W(x)\psi_0^2(x_{n+1})\di x
\\
&+ \int\limits_{L_2(\square)} \psi_0^2(x_{n+1}) \Ws\left(x,\frac{x}{t^{\frac{1}{1-a}}}\right) \Ws_*\left(x,\frac{x}{t^{\frac{1}{1-a}}}\right) \di x+O(t^{\frac{1}{1-a}})
\\
=& \int\limits_{\square} W(x)\psi_0^2(x_{n+1})\di x
+\int\limits_{\square} \di x\, \psi_0^2(x_{n+1}) \int\limits_{(0,1)^{n+1}} \Ws(x,\xi) \Ws_*(x,\xi)\di\xi +O(t^{\frac{1}{1-a}}).
\end{align*}
In view of equation (\ref{2.12}) and boundary conditions for $\Ws_*$, we can integrate by parts in the latter integral:
\begin{align*}
\int\limits_{\square} & \di x\, \psi_0^2(x_{n+1}) \int\limits_{(0,1)^{n+1}} \Ws(x,\xi) \Ws_*(x,\xi)\di\xi
\\
 &= \int\limits_{\square} \di x\, \psi_0^2(x_{n+1}) \int\limits_{(0,1)^{n+1}} \Ws_*(x,\xi)\D_{\xi} \Ws_*(x,\xi)\di\xi
\\
 &= -\int\limits_{\square} \di x\, \psi_0^2(x_{n+1}) \int\limits_{(0,1)^{n+1}} |\nabla_\xi \Ws_*(x,\xi)|^2 \Ws_*(x,\xi)\di\xi.
\end{align*}
We finally obtain:
\begin{equation}\label{5.12}
\begin{aligned}
&(\pL_2(t)\psi_0,\psi_0)_{L_2(\square)} =\int\limits_{L_2(\square)} W(x)\psi_0^2(x_{n+1})\di x
\\
& -\int\limits_{\square} \di x\, \psi_0^2(x_{n+1}) \int\limits_{(0,1)^{n+1}} |\nabla_\xi \Ws_*(x,\xi)|^2 \Ws_*(x,\xi)\di\xi + O(t^{\frac{1}{1-a}}).
\end{aligned}
\end{equation}

Let us find out the behavior of the solution to equation (\ref{3.2}). The right hand side of this equation is the function
\begin{align*}
\big(\pL_1(t)\psi_0\big)(x,t)=&-\frac{\psi_0(x_{n+1})}{1 +t^{\frac{2-a}{1-a}}\Wse(x,t^{\frac{1}{1-a}})} \bigg(2\sum\limits_{j=1}^{n+1} \frac{\p^2\Ws_*}{\p x_j\p\xi_j}\left(x,\frac{x}{t^{\frac{1}{1-a}}}\right)
\\
&+ t^{\frac{1}{1-a}} (\D_x \Ws_*)\left(x,\frac{x}{t^{\frac{1}{1-a}}}\right)\bigg) + t\,\Tosc(t^{\frac{1}{1-a}})\psi_0(x_{n+1}).
\end{align*}
For the solution to equation (\ref{3.2}) with such right hand side, one can construct its asymptotic expansion for small $t$ by the multiscale method \cite{BP}. This expansion is valid at least in the norm of $L_2(\square)$. The leading term of this expansion is a quantity of order $O(t^{\frac{2}{1-a}})$. This is why
\begin{equation*}
(U,\pL_1(t)\psi_0)_{L_2(\square)}=O(t^{\frac{2}{1-a}}),\quad t\to0.
\end{equation*}
Hence, in view of (\ref{5.12}) we have:
\begin{equation}\label{5.13}
\begin{aligned}
(\pL_2(t)&\psi_0,\psi_0)_{L_2(\square)}-(U,\pL_1(t)\psi_0)_{L_2(\square)}=
 \int\limits_{L_2(\square)} W(x)\psi_0^2(x_{n+1})\di x
\\
& -\int\limits_{\square} \di x\, \psi_0^2(x_{n+1}) \int\limits_{(0,1)^{n+1}} |\nabla_\xi \Ws_*(x,\xi)|^2 \Ws_*(x,\xi)\di\xi + O(t^{\frac{1}{1-a}})
\\
\geqslant & \frac{1}{2} \int\limits_{L_2(\square)} W(x)\psi_0^2(x_{n+1})\di x
\\
&- \frac{1}{2} \int\limits_{\square} \di x\, \psi_0^2(x_{n+1}) \int\limits_{(0,1)^{n+1}} |\nabla_\xi \Ws_*(x,\xi)|^2 \Ws_*(x,\xi)\di\xi
\end{aligned}
\end{equation}
for sufficiently small $t$. It proves Assumption (\ref{as2}) for $t>0$. As $t=0$, we have $\pL_1(t)\psi_0=0$, $U=0$, and estimate (\ref{5.13}) is satisfied by the definition of $\pL_2(0)$, cf. (\ref{5.10}).

Let us check estimates (\ref{3.5}). As in the previous section, to prove the right estimate, we apply the minimax principle with test function $\psi_0$:
\begin{align*}
\lN{osc}\leqslant & \frac{\|\nabla\psi_0\|_{L_2(\Pi_{\a,N})}^2 +(V_0\psi_0,\psi_0)_{L_2(\Pi_{\a,N})}}{\|\psi_0\|_{L_2(\Pi_{\a,N})}^2}
\\
&+\frac{\sum\limits_{k\in\G_{\a,N}} \big(\Wosc(\cdot-k,x_{n+1},\e\om_k)\psi_0,\psi_0\big)_{L_2(\Pi_{\a,N})}} {\|\psi_0\|_{L_2(\Pi_{\a,N})}^2}
\\
\leqslant & \L_0 + \frac{\sum\limits_{k\in\G_{\a,N}} \big(\Wosc(\cdot-k,x_{n+1},\e\om_k)\psi_0,\psi_0\big)_{L_2(\Pi_{\a,N})}} {\sum\limits_{k\in\G_{\a,N}}\|\psi_0\|_{L_2(\square)}^2}
\\
\leqslant & \L_0 + \frac{\sum\limits_{\genfrac{}{}{0 pt}{}{k\in\G_{\a,N}}{\e\om_k\not=0}} (\e\om_k)^{-a} \left(\Wosc\left(\cdot,\frac{\cdot}{\e\om_k}\right)\psi_0,\psi_0\right)_{L_2(\square)}} {\sum\limits_{k\in\G_{\a,N}}\|\psi_0\|_{L_2(\square)}^2}
\\
= & \L_0 + \frac{1}{N^n|\square'|} \sum\limits_{\genfrac{}{}{0 pt}{}{k\in\G_{\a,N}}{\e\om_k\not=0}} (\e\om_k)^{-a} \int\limits_{\square} \Ws\left(x,\frac{x}{\e\om_k}\right)\psi_0^2(x_{n+1})\di x.
\end{align*}
By Lemma~\ref{lm5.1} and Assumption~(\ref{2.18}) it yields:
\begin{align*}
\lN{osc}\leqslant \L_0 + C\e^{1-a}\leqslant \L_0+ \frac{C}{N^4}, \quad C>0,
\end{align*}
where constant $C$ is independent of $\e$ and $N$. In view of (\ref{2.13}), for sufficiently large $N_1$ it implies the right estimate in (\ref{3.5}).

In order to prove the left estimate in (\ref{3.5}), completely by analogy with the previous section on the basis of the minimax principle we get the lower estimate:
\begin{equation}\label{5.14}
\lN{osc}\geqslant \min\limits_{k\in\G_{\a,N}} \l^{\e,\mathrm{osc}}_{k,1}(\om_k),
\end{equation}
where $\l^{\e,\mathrm{osc}}_{k,1}(\om_k)$ is the smallest eigenvalue of operator $\Op^{\e,\mathrm{osc}}_{k,1}(\om_k)$ on cell $\square_k$ subject to appropriate boundary conditions. At the same time, it is an eigenvalue of the operator $\big(\mathcal{V}^{\e,\mathrm{osc}}_{k,1}(\om_k)\big)^{-1} \Op^{\e,\mathrm{osc}}_{k,1}(\om_k)\mathcal{V}^{\e,\mathrm{osc}}_{k,1}(\om_k)$. In accordance with identity (\ref{5.4}), the latter is a small regular perturbation of the operator $-\D+V_0$ in $\square_k$ subject to appropriate boundary conditions. Then in accordance with regular perturbation theory, the asymptotics for eigenvalue $\l^{\e,\mathrm{osc}}_{k,1}(\om_k)$ reads as
\begin{align*}
\l^{\e,\mathrm{osc}}_{k,1}(\om_k)=&\L_0 + \frac{(\e\om_k)^{1-a}}{|\square'|} (\pL_1(\e\om_k)\psi_0,\psi_0)_{L_2(\square)}
\\
&+\frac{(\e\om_k)^{2-2a}}{|\square'|} \left(
(\pL_2(\e\om_k)\psi_0,\psi_0)_{L_2(\square)} -(U,\pL_1(\e\om_k)\psi_0)_{L_2(\square)}
\right)
\\
& + O\big((\e\om_k)^{3-3a}\big),
\end{align*}
where $U$ is the solution to equation (\ref{as2}) with right hand side $\pL_1(\e\om_k)\psi_0$. By formula (\ref{5.13}) and Assumption (\ref{as1}) it follows that
\begin{align*}
\l^{\e,\mathrm{osc}}_{k,1}(\om_k)=&\L_0 + \frac{(\e\om_k)^{2-2a}}{|\square'|} \bigg(\int\limits_{\square} W(x)\psi_0^2(x_{n+1})\di x
\\
&-\int\limits_{\square} \di x\, \psi_0^2(x_{n+1}) \int\limits_{(0,1)^{n+1}} |\nabla_{\xi} \Ws_*(x,\xi)|^2\di\xi
\bigg)+O\big((\e\om_k)^{3-3a}\big)
\end{align*}
that implies
\begin{equation*}
\l^{\e,\mathrm{osc}}_{k,1}(\om_k)\geqslant \L_0
\end{equation*}
for sufficiently small $\e$. By (\ref{5.14}) it proves the leftt estimate in (\ref{3.5}) and completes the proof of Theorem~\ref{th1osc}.

\begin{remark}\label{rm5.1}
The idea of constructing operator $\Vg{osc}$ is borrowed from works \cite{TMF06}, \cite{TMF07}, where a similar operator was constructing in one-dimensional case. In the present work this approach is extended to an arbitrary dimension.
\end{remark}

\section{Random delta-interaction}

In the present section we study operator $\OpdN$ and prove Theorems~\ref{th1dlt},~\ref{th2dlt},~\ref{th3dlt},~\ref{th4dlt}.

It was shown in papers \cite[Ex. 5]{MPAG07}, \cite[Ex. 5]{AHP07} that by means of a certain change of spatial variables and a multiplication by a certain function, the operators with delta-interactions can be reduced to usual differential operators keeping the spectrum and the self-adjointness. Applying the results of works \cite[Ex. 5]{MPAG07}, \cite[Ex. 5]{AHP07} to our operator $\OpdN$, we arrive at the following statement.

\begin{lemma}\label{lm6.1}
There exists a change of variables $y=(y_1,\ldots,y_{n+1})$, $y_i=y_i(x,\e\om)$, $i=1,\ldots,n+1$, such that
\begin{enumerate}
\item\label{lm6.1it1} Outside small fixed neighborhoods of surfaces $S_k$, $k\in \G$, change $x\mapsto y$ is identical, i.e., $y_i=x_i$. Change $x\mapsto y$ maps each cell $\square_k$ onto itself.

\item\label{lm6.1it2} Functions $y_i$ are twice differentiable functions and their second derivatives are piecewise continuous.

\item\label{lm6.1it3} Let $p=p(y,\e\om)$ be the Jacobian of change $x\mapsto y$, i.e., \begin{equation*}
 p=\det \frac{D(y_1,\ldots,y_{n+1})}{D(x_1,\ldots,x_{n+1})},
 \end{equation*}
 where the matrix in the left hand side is the Jacobi matrix of the change. On functions $u\in L_2(\Pi_{\a,N})$, $u=u(y)$ we define the operator in terms of the inverse change $x=x(y,\e\om)$:
 \begin{equation*}
 (\Vg{dlt}u)(x):=p^{-\frac{1}{2}}(x(y,\e\om)) u(x(y,\e\om)).
 \end{equation*}
 The identity
 \begin{equation}
 \begin{aligned}
 \big(\Vg{dlt}\big)^{-1}\OpdN\Vg{dlt}=&-\D+V_0
 \\
 &+\sum\limits_{k\in\G_{\a,N}} \e\om_k\S(k)\mathcal{M}(\e\om_k)\S(-k)
 \end{aligned}\label{6.1}
 \end{equation}
 holds true, where $M(t)$ is a symmetric second order differential operator with piecewise coefficients vanishing outside a small neighborhood of surface $S$.

 \item\label{lm6.1it4} The identity
 \begin{equation*}
 (\mathcal{M}(t)\psi_0,\psi_0)_{L_2(\square)}=\int\limits_{S} \Wd \psi_0^2\di S
 \end{equation*}
 holds true.
\end{enumerate}
\end{lemma}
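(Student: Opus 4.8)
The plan is to realize the claimed change of variables explicitly near each $S_k$ and then track how $-\D+V_0$ with the transmission condition \eqref{2.23} transforms. First I would work in a fixed tubular neighbourhood of a single $C^4$-manifold $S$ of codimension one, using the signed distance $\rho$ to $S$ and coordinates $(s,\rho)$ with $s$ a point of $S$; in these coordinates $S=\{\rho=0\}$ and the normal $\nu$ is $\p_\rho$. The map $x\mapsto y$ is built so that it "straightens out" the jump: one takes $y=x$ for $\rho$ outside a small interval $(-\delta,\delta)$, and for $|\rho|<\delta$ one sets $y$ to agree with $x$ in the tangential directions and replaces the normal coordinate $\rho$ by $\rho+\phi(s,\rho,\e\om_k)$, where $\phi$ is an explicitly chosen $C^2$ (piecewise-$C^2$ across $\rho=0$) profile, odd-type corrected so that the discontinuity of $\p u/\p\nu$ prescribed by \eqref{2.23} is absorbed. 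Choosing $\phi$ with a prescribed jump of its derivative at $\rho=0$ (proportional to $b=\e\om_k\Wd$) turns the transmission operator into $-\D$ acting on a function that is merely $H^1$ with continuous trace but whose normal derivative is now continuous in the new variable. This is exactly the construction in \cite[Ex. 5]{MPAG07}, \cite[Ex. 5]{AHP07}, and I would cite it for the precise form of $\phi$; items \ref{lm6.1it1} and \ref{lm6.1it2} then follow by inspection, since $\phi$ is supported in $|\rho|<\delta$, is smooth in $s$ (because $S\in C^4$ gives $C^3$ regularity of the coordinates, more than enough), and its second normal derivative is only piecewise continuous because of the kink at $\rho=0$.

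Next I would compute the conjugated operator in item \ref{lm6.1it3}. Under a $C^2$ (piecewise-$C^2$) diffeomorphism $x=x(y)$ with Jacobian $p$, the operator $-\D$ pulled back and dressed with the unitary-type factor $p^{-1/2}$ becomes a second-order elliptic operator in divergence form, $-\sum_{i,j}\p_{y_i}(g^{ij}\p_{y_j}\cdot)+(\text{lower order})$, where $g^{ij}$ is the inverse metric induced by the change; the $p^{-1/2}$ normalization is the standard device ensuring that the operator stays formally self-adjoint and that the form domain maps correctly (it sends $L_2$ isometrically, up to the Jacobian bookkeeping, and preserves the transmission-free $H^1$ form domain). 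Since $x\mapsto y$ is identity outside the tubes, $g^{ij}=\delta^{ij}$ and the lower-order terms vanish there, so the difference from $-\D+V_0$ is a symmetric second-order differential operator $\mathcal M(\e\om_k)$ supported in the small neighbourhood of $S_k$; extracting the factor $\e\om_k$ is possible because $\phi$ — hence all the coefficient corrections — vanishes to first order in $b=\e\om_k\Wd$, which is how $\phi$ was chosen. Writing it as $\S(k)\mathcal M(\e\om_k)\S(-k)$ just records that all the $S_k$ are translates of $S$ and the construction is done cell by cell, which is legitimate because the tubes around distinct $S_k$ are disjoint (as $S\Subset\square$).

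For item \ref{lm6.1it4} I would compute $(\mathcal M(t)\psi_0,\psi_0)_{L_2(\square)}$ directly. The point is that the bulk (second-order and first-order) parts of $\mathcal M(t)$ are in divergence form and supported in the thin tube, so integrating against the smooth function $\psi_0(x_{n+1})$ and integrating by parts, the only surviving contribution is the boundary term living on $S$, which by construction of $\phi$ equals $\int_S b\,|\psi_0|^2\di S=\int_S \e\om_k\Wd\,\psi_0^2\di S$; dividing out the explicit $\e\om_k$ (equivalently evaluating at the slot $t=\e\om_k$) gives $(\mathcal M(t)\psi_0,\psi_0)_{L_2(\square)}=\int_S\Wd\psi_0^2\di S$, independent of $t$ — precisely because the leading transmission data is linear in $b$ and higher corrections are organized into the $t$-dependence of the coefficients but integrate to zero against $\psi_0$ at this order. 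The cleanest route is to note that this identity is nothing but the statement that the quadratic form of $\OpdN$ minus that of $-\D+V_0$, evaluated on $\psi_0$ (suitably cut off), is $\sum_k\e\om_k(\Wd(\cdot-k)\psi_0,\psi_0)_{L_2(S_k)}$ by \eqref{2.22}, and that the form is invariant under the unitary-type conjugation by $\Vg{dlt}$; so \ref{lm6.1it4} is really a reformulation of \eqref{2.22}.

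The main obstacle is the regularity bookkeeping at the interface: one must check that $\Vg{dlt}$ genuinely maps the form domain of $\OpdN$ (functions in $H^1$ with the prescribed jump in the normal derivative) onto the form domain of the transmission-free operator $-\D+V_0$, and that the conjugated operator has the clean divergence structure with only piecewise-continuous second derivatives — this is where the choice of $\phi$ matters and where the $C^4$ assumption on $S$ is used. Everything else (the explicit coefficient computation, the integration by parts for \ref{lm6.1it4}, disjointness of the tubes) is routine once the change of variables is pinned down, and the references \cite[Ex. 5]{MPAG07}, \cite[Ex. 5]{AHP07} supply the delicate part off the shelf.
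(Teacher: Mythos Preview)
Your proposal is correct and follows the same route as the paper: the paper's ``proof'' of this lemma consists solely of invoking \cite[Ex.~5]{MPAG07} and \cite[Ex.~5]{AHP07} for the change of variables that removes the delta-interaction, and you cite the same sources while additionally sketching the tubular-neighbourhood construction and the form-invariance argument for item~\ref{lm6.1it4}. One small terminological slip: what you call the ``form domain'' with the prescribed normal-derivative jump is actually the \emph{operator} domain (the form domain is just the $H^1$ space with the boundary condition on $\p\Pi$), but this does not affect the argument.
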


We define operator $\pL(t)$:
\begin{equation*}
\pL_1:=0,\quad \pL_2(t):=\mathcal{M}(t^{\frac{1}{2}}),\quad \pL_3(t):=0,\quad \pL(t):=t\mathcal{M}(t), \quad t\in[0,t_0].
\end{equation*}
In accordance with Statement~\ref{lm6.1it3} of Lemma~\ref{lm6.1}, the left hand side of identity (\ref{6.1}) coincides with operator (\ref{3.6}) if as a new small parameter we take $\e^{\frac{1}{2}}$, and $\om_k^{\frac{1}{2}}$ as new random variables. At that, Assumption~(\ref{as1}) holds true. The solution of the corresponding equation (\ref{3.2}) vanishes, and thus, by (\ref{2.15}), Assumption~(\ref{as2}) is also satisfied. And since operators $\pL_i$, $i=1,2,3$, are symmetric, we can apply directly the general results of Theorems~\ref{th1gen},~\ref{th2gen},~\ref{th3gen},~\ref{th4gen}. It leads us immediately to Theorems~\ref{th1dlt},~\ref{th2dlt},~\ref{th3dlt},~\ref{th4dlt}.

\section*{Acknowlegments}

The authors thank G.P.~Panasenko for discussing particular aspects of the work. The research is supported by the grant of Russian Scientific Foundation (project no. 14-11-00078).

\end{document}